\newcommand{\Av}{\mathrm{Av}}
\newcommand{\bY}{\mathbf{Y}}
\newcommand{\Norm}{\mathrm{N}}
\newcommand{\unip}{{\mathrm{u}}}
\newcommand{\Ga}{\mathbb{G}_{\mathrm{a}}}
\newcommand{\av}{\mathrm{av}}
\newcommand{\AS}{\mathrm{AS}}
\newcommand{\cX}{\mathcal{X}}
\newcommand{\cQ}{\mathcal{Q}}
\newcommand{\bk}{\Bbbk}
\newcommand{\Z}{\mathbb{Z}}
\newcommand{\F}{\mathbb{F}}
\newcommand{\sC}{\mathsf{C}}
\newcommand{\irr}{\mathsf{L}}
\newcommand{\Fr}{\mathrm{Fr}}
\newcommand{\bX}{\mathbf{X}}
\newcommand{\ext}{{\mathrm{ext}}}
\newcommand{\aff}{{\mathrm{aff}}}
\newcommand{\cF}{\mathcal{F}}
\newcommand{\cG}{\mathcal{G}}
\newcommand{\cL}{\mathcal{L}}
\newcommand{\IC}{\mathcal{IC}}
\newcommand{\cI}{\mathcal{I}}
\newcommand{\Gr}{\mathrm{Gr}}
\newcommand{\Fl}{\mathrm{Fl}}
\newcommand{\Perv}{\mathsf{Perv}}
\newcommand{\Rep}{\mathsf{Rep}}
\newcommand{\Db}{D^{\mathrm{b}}}
\DeclareMathOperator{\Hom}{Hom}
\DeclareMathOperator{\Ext}{Ext}
\DeclareMathOperator{\End}{End}
\DeclareMathOperator{\For}{For}
\newcommand{\simto}{\xrightarrow{\sim}}
\newcommand{\fR}{\mathfrak{R}}
\newcommand{\fRs}{\mathfrak{R}_{\mathrm{s}}}
\newcommand{\minR}{w^{\mathrm{R}}}
\newcommand{\minL}{w^{\mathrm{L}}}
\newcommand{\res}{\mathrm{res}}
\newcommand{\DFl}{\mathscr{D}}
\newcommand{\NFl}{\mathscr{N}}
\newcommand{\LFl}{\mathscr{L}}
\newcommand{\DGr}{\Delta}
\newcommand{\NGr}{\nabla}
\newcommand{\LGr}{\mathsf{L}}
\newcommand{\Satake}{\mathsf{Sat}}
\DeclareFontFamily{U}{mathx}{\hyphenchar\font45}
\DeclareFontShape{U}{mathx}{m}{n}{
      <5> <6> <7> <8> <9> <10>
      <10.95> <12> <14.4> <17.28> <20.74> <24.88>
      mathx10
      }{}
\DeclareSymbolFont{mathx}{U}{mathx}{m}{n}
\DeclareMathAccent{\widecheck}{0}{mathx}{"71}
\newcommand{\bG}{\widecheck{\mathbf{G}}}
\newcommand{\bB}{\widecheck{\mathbf{B}}}
\newcommand{\bT}{\widecheck{\mathbf{T}}}
\def\lotimes{\@ifnextchar_{\@lotimessub}{\@lotimesnosub}}
\def\@lotimessub_#1{\mathchoice{\mathbin{\mathop{\otimes}^L}_{#1}}%
  {\otimes^L_{#1}}{\otimes^L_{#1}}{\otimes^L_{#1}}}
\def\@lotimesnosub{\mathbin{\mathop{\otimes}^L}}
\newcommand{\wttimes}{\mathbin{\widetilde{\times}}}
\numberwithin{equation}{section}
\newtheorem{thm}{Theorem}[section]
\newtheorem{lem}[thm]{Lemma}
\newtheorem{prop}[thm]{Proposition}
\newtheorem{cor}[thm]{Corollary}
\newtheorem{conj}[thm]{Conjecture}
\theoremstyle{definition}
\theoremstyle{remark}
\newtheorem{rmk}[thm]{Remark}
\title{A geometric Steinberg formula}
\author{Pramod N. Achar}
\address{Department of Mathematics\\
  Louisiana State University\\
  Baton Rouge, LA 70803\\
  U.S.A.}
\email{pramod@math.lsu.edu}
\author{Simon Riche}
\address{Universit\'e Clermont Auvergne\\
  CNRS, LMBP\\
  F-63000 Clermont-Ferrand\\
  France}
\email{simon.riche@uca.fr}
\thanks{P.A. was supported by NSF Grant No.~DMS-1802241.  This project has received
funding from the European Research Council (ERC) under the European Union's Horizon 2020
research and innovation programme (grant agreements No~677147 and~101002592).
}
\dedicatory{Dedicated to the memory of Jim Humphreys}
\begin{document}

\begin{abstract}
We prove an isomorphism for simple perverse sheaves on the affine Grassmannian of a connected reductive algebraic group that is a geometric counterpart (in light of the Finkelberg-Mirkovi{\'c} conjecture) of the Steinberg tensor product formula for simple representations of reductive groups over fields of positive characteristic.
\end{abstract}

\maketitle

%%%%%%%%%%%%%%%%%%%%%%%%%%%%%%%%%%%%%%%%%%%%%%%%%%%%%%%%
\section{Introduction}
%%%%%%%%%%%%%%%%%%%%%%%%%%%%%%%%%%%%%%%%%%%%%%%%%%%%%%%%

%------------------------------------------------------
\subsection{Overview}
%------------------------------------------------------

The main result of the present paper is a formula expressing Iwahori-equivariant simple perverse sheaves on the affine Grassmannian of a connected reductive algebraic group in terms of convolution of simple perverse sheaves associated with ``restricted" elements of the affine Weyl group and simple perverse sheaves in the Satake category. In view of the Finkelberg--Mirkovi{\'c} conjecture, this can be viewed as a geometric counterpart of the Steinberg tensor product theorem for simple representations of reductive groups. One of our motivations for studying this question is that it allows us (using ideas from~\cite{abbgm}) to define and study a conjectural geometric model for blocks of representations of the Frobenius kernel of this reductive group; see~\cite{ar-pt2}.

%------------------------------------------------------
\subsection{The Finkelberg--Mirkovi{\'c} conjecture}
\label{ss:intro-FM}
%------------------------------------------------------

Before stating this result,
let us recall the Finkelberg--Mirkovi{\'c} conjecture.

Consider a connected reductive algebraic group $G$ over an algebraically closed field $\mathbb{F}$ of characteristic $p \neq 0$, with a choice of Borel subgroup $B \subset G$ and maximal torus $T \subset B$, and set $\bY=X_*(T)$. Let $\cL G$ be the loop group of $G$, let $\cL^+G$ be its arc group, and consider the affine Grassmannian $\Gr = \cL G / \cL^+G$. Next, let $\bk$ be either a finite field of characteristic $\ell \neq p$, or an algebraic closure of such a field. Then we can consider the category $\Perv_{\cL^+G}(\Gr,\bk)$ of $\cL^+G$-equivariant (\'etale) $\bk$-perverse sheaves of $\Gr$, which admits a natural structure of monoidal category with monoidal product $\star^{\cL^+G}$. Recall that the \emph{geometric Satake equivalence}~\cite{mv} provides an equivalence of monoidal categories
\[
 \mathsf{Sat} : (\Perv_{\cL^+G}(\Gr,\bk), \star^{\cL^+G}) \simto (\Rep(G^\vee_\bk),\otimes)
\]
where $G^\vee_\bk$ is a split connected reductive algebraic group over $\bk$, with a canonical maximal torus $T^\vee_\bk$ whose lattice of characters is $\bY$ and such that the root datum of $(G^\vee_\bk,T^{\vee}_\bk)$ is dual to that of $(G,T)$, and $\Rep(G^\vee_\bk)$ is its category of finite-dimensional algebraic representations. We will also denote by $B^{\vee}_\bk \subset G^\vee_\bk$ the Borel subgroup whose roots are the negative coroots of $(G,T)$ (with respect to our choice of $B$, considered as a negative Borel subgroup in $G$). We have a canonical autoequivalence
\[
\mathrm{sw} : \Perv_{\cL^+G}(\Gr,\bk) \simto \Perv_{\cL^+G}(\Gr,\bk)
\]
induced by the automorphism of $\cL G$ given by $g \mapsto g^{-1}$. 

Let us denote by $W$ the Weyl group of $(G,T)$ and by $\fR^\vee \subset \bY$ the coroot system of $(G,T)$, and consider the affine Weyl group $W_\aff := W \ltimes \Z \fR^\vee$ and the ``extended" version $W_\ext := W \ltimes \bY$. The group $W_\aff$ is known to admit a canonical generating subset $S_\aff$ (depending on the choice of $B$) such that $(W_\aff,S_\aff)$ is a Coxeter system, and $W_\ext$ is a semidirect product of $W_\aff$ by an abelian group $\Omega$ acting by Coxeter group automorphisms, and is naturally endowed with a length function. Let $I_\unip$ be the preimage of the unipotent radical of $B$ under the canonical morphism $\cL^+G \to G$; then 
%there exists a natural structure of quasi-Coxeter group on $W_\ext$ such that $W$ is a parabolic subgroup, and such that 
the $I_\unip$-orbits on $\Gr$
%the connected component $\Gr^\circ$ of $\Gr$ containing the base point 
are in a canonical bijection with the subset $W_\ext^S \subset W_\ext$ of elements $w$ which have minimal length in the coset $wW$.
Consider also a connected reductive algebraic group $\bG$ over $\bk$ whose Frobenius twist $\bG^{(1)}$ is $G^\vee_\bk$, and denote by $\bT \subset \bB \subset \bG$ the maximal torus and Borel subgroup such that $\bT^{(1)}=T^\vee_\bk$ and $\bB^{(1)}=B^\vee_\bk$. The Frobenius morphism of $\bG$ (or of any of its subgroups) will simply be denoted $\Fr$.

We identify the character lattice of $\bT$ with $\bY$, in such a way that the pullback under the Frobenius morphism $\Fr : \bT \to T^{\vee}_\bk$ is given by $\lambda \mapsto \ell\lambda$. Let $\bY_+ \subset \bY$ be the set of dominant weights for $\bG$ (or dominant coweights for $G$) with respect to the choice of positive roots that makes $\bB$ the negative Borel subgroup.  For $\lambda \in \bY_+$ we denote by $\irr(\lambda)$ the simple $\bG$-module of highest weight $\lambda$.

The group $W_\aff$ is the affine Weyl group of $\bG$ in the sense of~\cite{jantzen}. We will denote the ``dot action'' of $W_\aff$ and $W_\ext$ on $\bY$ by $\cdot_\ell$. If $\ell \geq h$ where $h$ is the Coxeter number of $\bG$, we can consider the extended principal block $\Rep_{[0]}(\bG)$ in the category $\Rep(\bG)$ of finite-dimensional algebraic $\bG$-modules, namely the Serre subcategory generated by the simple $\bG$-modules of the form $\irr(w^{-1} \cdot_\ell 0)$ with $w \in W_\ext^S$.
%whose highest weight belongs to the intersection of $W_\aff \cdot_\ell 0$ with the dominant cone, i.e.~is of the form $w \cdot_\ell 0$ where $w^{-1} \in W_\aff^S$.

The following statement is known as the Finkelberg--Mirkovi{\'c} conjecture. Here we consider the category $\Perv_{I_\unip}(\Gr,\bk)$ of $I_\unip$-equivariant $\bk$-perverse sheaves on $\Gr$, with the natural convolution action (again denoted $\star^{\cL^+G}$) of the category $\Perv_{\cL^+G}(\Gr,\bk)$.
% of $\cL^+G$-equivariant $\bk$-perverse sheaves on $\Gr^\circ$.  
 The simple $I_\unip$-equivariant perverse sheaf supported on the closure of the $I_\unip$-orbit labeled by $w \in W_\ext^S$ is denoted by $\LGr_w$.

\begin{conj}[Finkelberg--Mirkovi{\'c} conjecture, \cite{fm}]
\label{conj:FM}
Assume that $\ell \geq h$, and that $\bY/\Z\fR^\vee$ has no $\ell$-torsion.
There exists an equivalence of categories
\[
 \mathsf{FM} : \Perv_{I_\unip}(\Gr,\bk) \simto \Rep_{[0]}(\bG)
\]
which identifies the natural highest weight structures on both sides, and satisfies
\[
\mathsf{FM}(\LGr_w) \cong \irr(w^{-1} \cdot_\ell 0)
\qquad \text{for any $w \in W_\ext^S$.}
\]
%sends for any $w \in W_\aff^S$ the intersection cohomology complex associated with the orbit labeled by $w$ to the simple $\bG$-module of highest weight $w^{-1} \cdot_\ell 0$. 
Moreover, for $\cF$ in $\Perv_{I_\unip}(\Gr,\bk)$ and $\cG$ in $\Perv_{\cL^+G}(\Gr,\bk)$ there exists a bifunctorial isomorphism
\[
 \mathsf{FM}(\cF \star^{\cL^+G} \cG) \cong \mathsf{FM}(\cF) \otimes \Fr^* \bigl( \mathsf{Sat}(\mathrm{sw}^* \cG) \bigr).
\]
\end{conj}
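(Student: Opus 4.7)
The strategy is a comparison of highest weight categories. Both $\Perv_{I_\unip}(\Gr^\circ,\bk)$ and $\Rep_0(\bG)$ carry canonical highest weight structures indexed by $W_\aff^S$: on the geometric side, standards and costandards are the $j_!$ and $j_*$ extensions of shifted constant sheaves on the $I_\unip$-orbits $\Gr^\circ_w$ with $w \in W_\aff^S$; on the representation side, they are the Weyl modules $V(w^{-1} \cdot_\ell 0)$ and the induced modules $H^0(w^{-1} \cdot_\ell 0)$, sharing the same index set. Since a highest weight equivalence is determined up to isomorphism by its effect on the labelling of simples, the prescribed image of the $\IC$-sheaves under $\mathsf{FM}$ pins down the equivalence once its existence is known.

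Constructing such a functor directly from sheaves to representations is not possible in positive characteristic, so I would proceed indirectly. One natural route passes through the Iwahori--Whittaker category: the averaging functor $\Perv_{I_\unip}(\Gr^\circ,\bk) \to \Perv_{\IW}(\Gr^\circ,\bk)$ should be a highest weight equivalence, and the Iwahori--Whittaker side can then be related to $\Rep_0(\bG)$ by an Arkhipov--Bezrukavnikov--Ginzburg style theorem. Concretely, one matches the endomorphism algebras of suitable tilting generators on both sides, typically after passing to a mixed or graded setting where Koszul-style dualities become available, and then upgrades this identification to an abelian equivalence preserving the highest weight structure. The identification of simples with the claimed highest weights follows by tracking the labelling through the chain.

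For the convolution compatibility, the right convolution action of $\Perv_{\cL^+G}(\Gr^\circ,\bk)$ on $\Perv_{I_\unip}(\Gr^\circ,\bk)$ corresponds, under the geometric Satake equivalence and Frobenius pullback, to the tensor action of $\Rep(\bG^{(1)})$ on the principal block --- an echo of Steinberg's tensor product theorem. The automorphism $\mathrm{sw}$ records the fact that the left--right asymmetry of convolution on $\cL G$ must be reversed to match the symmetric tensor structure on representations, since one inverts $g \mapsto g^{-1}$ to turn right actions into left ones. Once $\mathsf{FM}$ is built so as to intertwine these actions on generators, bifunctoriality extends the isomorphism to the whole category.

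The hard part will be constructing the equivalence itself. The combinatorial matching of labelling sets and the structure of the highest weight categories are well understood, but producing an honest functor demands substantial machinery: mixed perverse sheaves, tilting theory in the modular setting, Koszul-type self-dualities, and precise representation-theoretic input (linkage, Jantzen's translation principle, good filtrations). Moreover, maintaining functoriality with respect to the convolution action throughout this chain of intermediate equivalences is delicate, which is why the statement, although well-supported, remains a conjecture in full generality.
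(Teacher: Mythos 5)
This statement is labelled a \emph{conjecture} in the paper, and the paper does not prove it: Remark~\ref{rmk:intro-op}(3) states explicitly that ``A proof of Conjecture~\ref{conj:FM} seems to be within reach, but is not published as of now.'' So there is no proof in the paper against which to compare your attempt, and your write-up --- which is a high-level strategy sketch rather than an argument, and which you yourself conclude by conceding that ``the statement \ldots remains a conjecture'' --- is in that sense appropriately cautious. The ideas you outline (comparison of highest weight structures indexed by $W_\aff^S$, passage through the Iwahori--Whittaker model along the lines of~\cite{bgmrr}, matching endomorphism algebras of tilting generators, and the role of $\mathrm{sw}$ in reconciling the left/right asymmetry of convolution with the symmetry of $\otimes$) are consistent with what the experts expect the eventual proof to look like, but none of the genuinely hard steps is carried out: you do not construct the functor, do not verify that the proposed averaging is a highest weight equivalence, and do not establish the bifunctorial isomorphism for the convolution action. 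The remark that a highest weight equivalence is ``determined up to isomorphism by its effect on the labelling of simples'' addresses uniqueness, not existence, which is the entire content of the conjecture. In short: nothing is wrong in your sketch, but nothing is proved either, and the paper does not prove it --- the result this paper actually establishes is Theorem~\ref{thm:main-intro}, a \emph{consequence} on the geometric side that would correspond to Steinberg's tensor product formula under Conjecture~\ref{conj:FM}, not Conjecture~\ref{conj:FM} itself.
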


\begin{rmk}
\phantomsection
\label{rmk:intro-op}
\begin{enumerate}
\item
 The combinatorics involved in Conjecture~\ref{conj:FM} takes a more natural form if we work with the ``opposite'' affine Grassmannian $\Gr^{\mathrm{op}}=\cL^+G \backslash \cL G$ (with its action of $\cL^+G$ and $I_\unip$ induced by right multiplication on $\cL G$). It is however much more common to work with $\Gr$ rather than $\Gr^{\mathrm{op}}$, and for this reason we will work with the conjecture as formulated in Conjecture~\ref{conj:FM}.
\item
\label{it:conj-FM-ext}
If $\ell \geq h$, the group $\bY/\Z\fR^\vee$ can have $\ell$-torsion only if $G$ has a component on type $A_\ell$. This can create troubles with Conjecture~\ref{conj:FM}; e.g., the extended principal block of $\mathrm{SL}_\ell$ in characteristic $\ell$ has its simple objects in a natural bijection with $W_\ext^S \cap W_\aff$, which does not match the combinatorics of the category $\Perv_{I_\unip}(\Gr,\bk)$ for $G=\mathrm{PGL}_\ell$.
%It is often convenient in Conjecture~\ref{conj:FM} to replace $\Gr^\circ$ by $\Gr$ and $\Rep_0(\bG)$ by the ``extended principal block'' defined in terms of the extended affine Weyl group $W_\ext=W \ltimes \bY$ rather than $W_\aff$. The two versions of the conjecture are equivalence if $p>h$, but one must be careful with type $A_p$ in characteristic $p$. (For instance, the extended principal block coincides with the principal block for $\mathrm{SL}_p$ in characteristic $p$, while the similar statement is not true for perverse sheaves on $\Gr$.)
%It is tempting to state Conjecture~\ref{conj:FM} for the \emph{extended} principal block of $\bG$ (defined using the extended affine Weyl group $W_\ext=W \ltimes \bY$ instead of $W_\aff$) and the category $\Perv_{I_\unip}(\Gr,\bk)$. However, it turns out that this version is false when $G=\mathrm{PGL}(2)$ and $\ell=2$. Indeed, this version of the conjecture would imply that the natural morphism
%\[
%\Ext^1_{\bG^{(1)}}(V,V') \to \Ext^1_{\bG}(\Fr^*(V),\Fr^*(V'))
%\]
%is an isomorphism for all algebraic $\bG^{(1)}$-modules $V,V'$ with $\dim(V)<\infty$, but this is false in this case when $V=\bk$ and $V'=\scO(\bG^{(1)})$ by~\cite[Remark in~\S II.12.2]{jantzen}. We expect this ``extended" version of the Finkelberg--Mirkovi{\'c} conjecture to hold under slightly stronger assumptions on $\ell$.
\item
A proof of Conjecture~\ref{conj:FM} seems within reach (maybe under stronger assumptions), but is not available as of now.
\end{enumerate}
\end{rmk}

%------------------------------------------------------
\subsection{The geometric Steinberg formula}
%------------------------------------------------------

From now on we assume for simplicity that the center of $G$ is a torus. (Most questions we are interested in can be reduced to this case.)
%e will denote by $\bY_+ \subset \bY$ the subset of coweights of $G$ which are dominant. Under the identification $\bY=X^*(\bT^{(1)})$, resp.~$\bY=X^*(\bT)$, $\bY_+$ corresponds to the dominant weights for the choice of positive roots of $\bG^{(1)}$, resp.~$\bG$, such that $\bB^{(1)}$, resp.~$\bB$, is the negative Borel subgroup.  For $\lambda \in \bY_+$ we denote by $\irr(\lambda)$, resp.~$\irr^{(1)}(\mu)$, the simple $\bG$-module, resp.~$\bG^{(1)}$-module, of highest weight $\lambda$ (considered as a character of $\bT$, resp.~$\bT^{(1)}$).
The main results of the paper are statements which correspond under Conjecture~\ref{conj:FM} (and some ``singular" analogues) to the following two classical results in representation theory.  (Here, $\irr^{(1)}(\lambda)$ denotes the simple $\bG^{(1)}$-module of highest weight $\lambda$, and $\bG_1$ denotes the Frobenius kernel of $\bG$.)
%where $\bG_1$ is the Frobenius kernel of $\bG$ and for $\lambda \in \bY_+$ we denote by $\irr(\lambda)$, resp.~$\irr^{(1)}(\mu)$, the simple $\bG$-module, resp.~$\bG^{(1)}$-module, of highest weight $\lambda$ (considered as a character of $\bT$, resp.~$\bT^{(1)}$).
\begin{enumerate}
\item
\label{it:intro-Steinberg}
(Steinberg's tensor product formula,~\cite[Proposition~II.3.16]{jantzen})
For any $\lambda \in \bY_+$ restricted and any $\mu \in \bY_+$ we have
\[
\irr(\lambda + \ell\mu) \cong \irr(\lambda) \otimes \Fr^*(\irr^{(1)}(\mu)).
\]
\item
\label{it:intro-Steinberg-ff}
(\cite[Propositions~II.3.10 and II.3.15]{jantzen}) For $\lambda \in \bY_+$ restricted, 
$\End_{\bG_1}(\irr(\lambda))=\bk$.
\end{enumerate}
More precisely, instead of~\eqref{it:intro-Steinberg-ff} we will prove an analogue of the following equivalent statement:
\begin{enumerate}
\setcounter{enumi}{2}
\item
\label{it:intro-Steinberg-ff-2}
For any $\lambda \in \bY_+$ restricted, the functor
\[
\Rep(G^\vee_\bk) \to \Rep(\bG)
\]
defined by $V \mapsto \irr(\lambda) \otimes \Fr^*(-)$ is fully faithful.
\end{enumerate}

The geometric counterpart of $\irr^{(1)}(\mu)$ (for $\mu \in \bY_+$) is provided by the geometric Satake equivalence: it is a classical fact that the simple objects in $\Perv_{\cL^+G}(\Gr,\bk)$ are in canonical bijection with $\bY_+$, and that if we denote by $\IC^\mu$ the simple object attached to $\mu \in \bY_+$, then we have $\Satake(\IC^\mu) = \irr^{(1)}(\mu)$ for any $\mu \in \bY_+$.
%Now, let $W_\ext := W \ltimes \bY$ be the extended affine Weyl group (as in Remark~\ref{rmk:intro-op}\eqref{it:conj-FM-ext}), which contains $W_\aff$ as a normal subgroup.
%; it is known that the length function $\ell$ of $W_\aff$ (defined from the Coxeter system $(W_\aff, S_\aff)$) admits a canonical extension to $W_\ext$, which will be denoted similarly. 
Using the geometry of alcoves one can naturally define a subset $W_\ext^\res \subset W_\ext$ of ``restricted elements'' which provides a replacement for the restricted dominant weights for $G^\vee_\bk$; see~\S\ref{ss:restricted} for details. This subset is contained in 
%the subset 
$W_\ext^S$.
% of elements $w$ which are minimal in $wW$ (for the Bruhat order).
%, which 
%is itself stable under right multiplication by the elements $t_\lambda:=(e \ltimes \lambda)$ with $\lambda \in \bY$ antidominant.

We can now state the main result of the paper, which provides a geometric counterpart to the properties~\eqref{it:intro-Steinberg}--\eqref{it:intro-Steinberg-ff-2} above.  In this statement, $w_\circ$ denotes the longest element of $W$, and $t_\lambda := e \ltimes \lambda$ denotes the element of $W_\ext$ corresponding to a weight $\lambda \in \bY$.

\begin{thm}
\label{thm:main-intro}
For any $w \in W_\ext^\res$, the functor
\[
\LGr_w \star^{\cL^+ G} (-) : \Perv_{\cL^+G}(\Gr,\bk) \to \Perv_{I_\unip}(\Gr,\bk)
\]
is fully faithful, and satisfies
$\LGr_w \star^{\cL^+ G} \IC^\mu \cong \LGr_{wt_{w_\circ(\mu)}}$ for any $\mu \in \bY_+$.
\end{thm}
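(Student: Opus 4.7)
The plan is to split the theorem into two stages: first, establish the isomorphism $\mathsf{L}_w \star^{\cL^+G} \IC^\mu \cong \mathsf{L}_{wt_{w_\circ(\mu)}}$ on simple objects; then deduce full faithfulness of the functor. The key combinatorial input is that, for $w \in W_\ext^\res$ and $\mu \in \bY_+$, one has length additivity $\ell(wt_{w_\circ(\mu)}) = \ell(w) + \ell(t_{w_\circ(\mu)})$, and $wt_{w_\circ(\mu)}$ still lies in $W_\ext^S$. In particular, the twisted product $\overline{\Gr_w} \wttimes \overline{\Gr^\mu}$ has the same dimension as $\overline{\Gr_{wt_{w_\circ(\mu)}}}$, so the convolution morphism
\[
m \colon \overline{\Gr_w} \wttimes \overline{\Gr^\mu} \to \overline{\Gr_{wt_{w_\circ(\mu)}}}
\]
induced by the multiplication in $\cL G$ is birational onto its image.

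To establish the formula on simples, I would show that $\mathsf{L}_w \star^{\cL^+G} \IC^\mu$ is perverse, supported on $\overline{\Gr_{wt_{w_\circ(\mu)}}}$, and has restriction to the open stratum equal to the appropriately shifted constant sheaf. Two approaches suggest themselves: (a) show that $m$ is a small morphism (possibly after passing to a Bott--Samelson-type resolution), so that the pushforward of the IC sheaf is the IC of the target; or (b) employ parity-sheaf techniques, showing that $\mathsf{L}_w$ is parity (which is expected when $w$ is restricted under the standing assumptions on $\ell$) and that $\IC^\mu$ is parity, so that their convolution is parity; combined with a perversity check this identifies the convolution as a simple perverse sheaf. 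In either approach, the essential geometric content is that the restrictedness condition rules out excess fiber dimensions of $m$ over lower-dimensional strata, preventing any lower simple perverse sheaf from appearing as a summand.

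For full faithfulness, once the formula on simples is available, I would first verify that $\mathsf{L}_w \star^{\cL^+G} (-)$ is exact on $\Perv_{\cL^+G}(\Gr,\bk)$ by devissage along composition series. Since distinct simples $\IC^\mu$ go to distinct simples $\mathsf{L}_{wt_{w_\circ(\mu)}}$, full faithfulness on the simple objects is immediate. To extend to all of $\Perv_{\cL^+G}(\Gr,\bk)$, one can proceed by induction on composition-series length, comparing the induced map on $\Ext^1$'s (computable on the source side via Satake as $\Ext^1_{G^\vee_\bk}$), or, more cleanly, construct an explicit right (or left) adjoint to $\mathsf{L}_w \star^{\cL^+G} (-)$ via convolution with a Verdier-dual sheaf and verify that the unit is an isomorphism on simples — whence on everything, by exactness. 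The main obstacle, in my view, lies in the first stage: convolution $\star^{\cL^+G}$ does not preserve perversity in general, and the restrictedness of $w$ is precisely what makes it do so here. Identifying the sharp geometric content of restrictedness (smallness of $m$, a parity-perverse coincidence, or a direct stalk analysis via the recollement) is the technical heart of the argument; the deduction of full faithfulness is then comparatively formal.
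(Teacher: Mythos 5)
Your proposal inverts the logical order used in the paper, and this is precisely where the difficulty lies in positive characteristic. The paper proves \emph{full faithfulness first} (via the Hom-vanishing of Lemma~\ref{lem:vanishing-Steinberg-2}, Mathieu's theorem~\ref{prop:mathieu-thm}, and a five-lemma devissage through standard and costandard filtrations), and then deduces the isomorphism on simples: by full faithfulness $\End(\LGr_w \star^{\cL^+G} \IC^\mu) \cong \End(\IC^\mu) = \bk$, and combined with Verdier self-duality of the convolution this forces simplicity; the label is then pinned down by a support/composition-factor argument. Your ordering (simples first, then full faithfulness) is essentially the one in~\cite{abbgm}, which works in characteristic~$0$ because the Satake category is semisimple there; but as the introduction of the paper points out, it does not work directly when $\bk$ has positive characteristic.

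Concretely, both of your proposed routes to the simple isomorphism have gaps. Route~(a), smallness of the convolution map: the dimension bound of Lemma~\ref{lem:dimension} does give a strict estimate on fibers over lower strata, but the source $\overline{\Gr_w} \wttimes \overline{\Gr^\mu}$ is not smooth (since $\overline{\Gr^\mu}$ is singular), so deducing that $m_*\IC$ is again an $\IC$ would require a resolution argument that you do not supply and that is not in the paper. Route~(b), parity: you write that $\LGr_w$ being parity is ``expected when $w$ is restricted,'' but this is not among the hypotheses of the theorem and is itself a substantial unproven claim (it is closely tied to hard results on torsion in intersection cohomology); the paper makes no use of parity sheaves.

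The deduction of full faithfulness from the simple isomorphism is also not carried out. Knowing that distinct simples go to distinct simples gives full faithfulness \emph{on the full subcategory of semisimple objects only}. Your suggestion to compare $\Ext^1$'s ``computable on the source side via Satake'' leaves the target side entirely uncontrolled; bounding $\Ext^1(\LGr_w \star \cF, \LGr_w \star \cG)$ from above is exactly the content of Lemma~\ref{lem:vanishing-Steinberg-2} and Step~1 of the paper's full-faithfulness proof, and would have to be redone. Your alternative of constructing an explicit right adjoint and checking the unit on simples is also problematic: the functor $\LGr_w \star^{\cL^+G}(-)$ is not an equivalence and has no obvious adjoint given by convolution, and even if one existed, a right adjoint of an exact functor between perverse-sheaf categories is only left exact in general, so the five-lemma induction you allude to would not apply without further argument. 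In short, you have correctly identified the crucial geometric input (length additivity and the dimension estimate on convolution fibers), but your plan for exploiting it would, once the gaps are filled, collapse back into something very close to the paper's actual proof of full faithfulness.
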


(We remark that $W_\ext^S$ is stable under multiplication on the right by $t_\lambda$ for $\lambda$ antidominant, so that $wt_{w_\circ(\mu)}$ is a valid label of an $I_\unip$-orbit.)

Our proof of Theorem~\ref{thm:main-intro} follows arguments found in~\cite{abbgm}, where the authors prove the isomorphism $\LGr_w \star^{\cL^+ G} \IC^\mu \cong \LGr_{wt_{w_\circ(\mu)}}$ when $\bk$ is an algebraic closure of $\mathbb{Q}_\ell$. As presented there, the proof uses some special features of the characteristic-$0$ setting (e.g.~the decomposition theorem); however, a closer analysis of their arguments reveals that they prove the full faithfulness statement in Theorem~\ref{thm:main-intro} in the case of positive-characteristic coefficients too. (Note that when $\bk$ has characteristic $0$ the Satake category is semisimple, so that this full faithfulness statement is an immediate consequence of the isomorphism of simple perverse sheaves.) It is then not difficult to deduce the isomorphism for simple objects.

\begin{rmk}
\begin{enumerate}
\item
Let us emphasize that there is no assumption on $\ell$ in Theorem~\ref{thm:main-intro}. Such an assumption is needed only to (conjecturally) relate this statement to Representation Theory.
\item
In the body of the paper we will also prove a ``Whittaker" variant of Theorem~\ref{thm:main-intro}, for any choice of a subset $A \subset S_\aff$ generating a finite subgroup. (The case stated above corresponds to $A=\varnothing$.) This is motivated by a ``singular" variant of Conjecture~\ref{conj:FM}, which postulates the existence of a similar equivalence relating a singular block of $\Rep(\bG)$ (with ``singularity'' determined by $A$) with a category of perverse sheaves on $\Gr$ satisfying a Whittaker condition relative to a group attached to $A$.
\item
In this paper we work with perverse sheaves for the \'etale topology because we want to cover also the ``Whittaker'' categories, which have no counterpart at this point in the ``classical" setting of perverse sheaves for the analytic topology. However, in case $A=\varnothing$ our category is just the category of $I_\unip$-equivariant perverse sheaves on $\Gr$, which also makes sense in the classical setting; in this special case our proof of Theorem~\ref{thm:main-intro} applies in both settings. 
\end{enumerate}
\end{rmk}

%------------------------------------------------------
\subsection{Contents}
%------------------------------------------------------

In Section~\ref{sec:combinatorics} we prove a number of preliminary results of a combinatorial nature regarding the extended affine Weyl group $W_\ext$.
In Section~\ref{sec:perverse} we define our categories of perverse sheaves $\Perv_{(I_\unip^A, \cX_A)}(\Gr,\bk)$ (and their analogues for sheaves on the affine flag variety $\Fl$) and the ``averaging'' functors relating them. All the results from these sections are known in some form, but we found it convenient to state them and give (sketches of) proofs.
Finally, in Section~\ref{sec:geometric-Steinberg} we prove Theorem~\ref{thm:main-intro}. 

%%%%%%%%%%%%%%%%%%%%%%%%%%%%%%%%%%%%%%%
\section{Combinatorics of the affine Weyl group}
\label{sec:combinatorics}
%%%%%%%%%%%%%%%%%%%%%%%%%%%%%%%%%%%%%%%

%-------------------------------------------------------------------
\subsection{The extended affine affine Weyl group}
\label{ss:Wext}
%-------------------------------------------------------------------

Let $\F$, $G$, $B$, $T$, $\bY$, $W$ be as in~\S\ref{ss:intro-FM}.
We will denote by $\bX:=X^*(T)$ the character lattice of $T$, by $\fR \subset \bX$ the root system of $(G,T)$, and by $\fR^\vee \subset \bY$ the coroot system; the natural bijection from $\fR$ to $\fR^\vee$ will be denoted $\alpha \mapsto \alpha^\vee$ as usual.
We will denote by $\fR_+ \subset \fR$ the system of positive roots consisting of the $T$-weights in $\mathrm{Lie}(G)/\mathrm{Lie}(B)$, and by $\fRs$ the associated basis of $\fR$. 
The corresponding sets of dominant coweights and strictly dominant coweights will be denoted $\bY_+$ and $\bY_{++}$ respectively. 
If we denote by $S \subset W$ the subset consisting of the reflections $s_{\alpha^\vee}$ for $\alpha \in \fRs$, then it is well known that $(W,S)$ is a Coxeter system. The longest element in this group will be denoted $w_\circ$. We will assume that $\bX/\Z\fR$ has no torsion, or in other words
that the scheme-theoretic center of $G$ is connected.
This condition ensures that there exists $\varsigma \in \bY$ such that $\langle \alpha,\varsigma \rangle = 1$ for all $\alpha \in \fR_{\mathrm{s}}$; we fix such an element once and for all.

The affine Weyl group associated with $G$
is the semidirect product
\[
W_\aff := W \ltimes \Z\fR^\vee,
\]
where $\Z\fR^\vee \subset \bY$ is the lattice generated by $\fR^\vee$. For $\lambda \in \Z\fR^\vee$, we will write $t_\lambda$ for the corresponding element of $W_\aff$. It is a standard fact that if we denote by $S_\aff \subset W_\aff$ the subset consisting of $S$ together with the elements $t_{\beta^\vee} s_{\beta^\vee}$ where $\beta^\vee \in \fR^\vee$ is a maximal short coroot, then the pair $(W_\aff,S_\aff)$ is a Coxeter system. Moreover, classical results of Iwahori--Matsumoto~\cite{im} show that the associated length function on $W_\aff$ can be described by the following formula for $w \in W$ and $\lambda \in \Z\fR^\vee$:
\begin{equation}
\label{eqn:formula-length}
\ell(w t_\lambda) = \sum_{\substack{\alpha \in \fR_+ \\ w(\alpha) \in \fR_+}} |\langle \lambda,\alpha \rangle| + \sum_{\substack{\alpha \in \fR_+ \\ w(\alpha) \in -\fR_+}} |1+\langle \lambda,\alpha \rangle|.
\end{equation}

The formula on the right-hand side of~\eqref{eqn:formula-length} makes sense more generally for $\lambda \in \bY$, which allows us to extend the function $\ell$ to the larger group
\[
W_\ext := W \ltimes \bY.
\]
The subgroup $W_\aff \subset W_\ext$ is normal, and if we set
\[
\Omega:=\{w \in W_\ext \mid \ell(w)=0\}
\]
then $\Omega$ is a finitely generated abelian group acting on $W_\aff$ (via conjugation) by Coxeter group automorphisms, and multiplication induces a group isomorphism
\[
\Omega \ltimes W_\aff \simto W_\ext;
\]
moreover $\ell(\omega w)=\ell(w \omega)=\ell(w)$ for any $w \in W_\ext$ and $\omega \in \Omega$. We can also extend the Bruhat order $\leq$ on $W_\aff$ to $W_\ext$ by declaring that for $\omega,\omega' \in \Omega$ and $w,w' \in W_\aff$ we have $\omega w \leq \omega' w'$ iff $\omega=\omega'$ and $w \leq w'$. (The same rule will then also apply when switching the order of $\omega$ and $w$.) 

The following property holds for general Coxeter groups, and can be checked using the characterization of the Bruhat order in terms of reduced expressions and the exchange condition.

\begin{lem}
\label{lem:Bruhat-order-lengths-add}
Let $x,y,w \in W_\ext$, and assume that $\ell(xy)=\ell(x)+\ell(y)$ and $\ell(xw)=\ell(x)+\ell(w)$. Then
$y \leq w$ if and only if $xy \leq xw$.
\end{lem}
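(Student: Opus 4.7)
The plan is to first reduce from $W_\ext$ to the underlying Coxeter group $W_\aff$ via the semidirect product decomposition $W_\ext = \Omega \ltimes W_\aff$, and then prove the Coxeter-group statement by induction on $\ell(x)$, with the inductive step reducing to a one-simple-reflection lemma that is handled by the subword characterization of the Bruhat order.

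For the reduction: writing $x$, $y$, $w$ uniquely as $\omega v$ with $\omega \in \Omega$ and $v \in W_\aff$, one uses that $\ell$ is insensitive to the $\Omega$-part, that $\Omega$ acts on $W_\aff$ by Coxeter-group automorphisms (so lengths in $W_\aff$ are preserved under $\Omega$-conjugation), and that the projection $W_\ext \to \Omega$ is a group homomorphism. Since elements of $W_\ext$ in distinct $\Omega$-cosets are incomparable by the definition of the extended Bruhat order, both $y \leq w$ and $xy \leq xw$ force the $\Omega$-parts of $y$ and $w$ to agree; assuming they do, factoring out the $\Omega$-parts translates the statement, along with its length-additivity hypotheses, into the analogous statement inside the Coxeter group $W_\aff$.

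Inside $W_\aff$, I induct on $\ell(x)$, the case $\ell(x)=0$ being trivial. For $\ell(x) > 0$, pick $s \in S_\aff$ with $\ell(xs) < \ell(x)$ and write $x = x' s$. The hypothesis $\ell(xy) = \ell(x)+\ell(y)$ together with the triangle inequality $\ell(x' \cdot sy) \leq \ell(x') + \ell(sy) \leq \ell(x')+1+\ell(y)$ forces both $\ell(sy) = \ell(y)+1$ and $\ell(x' \cdot sy) = \ell(x')+\ell(sy)$, and the analogous statements hold for $w$. Applying the induction hypothesis to the triple $(x', sy, sw)$ then gives $xy \leq xw$ iff $sy \leq sw$, so the problem reduces to the following one-simple-reflection lemma: under the descent conditions $\ell(sy) > \ell(y)$ and $\ell(sw) > \ell(w)$, one has $sy \leq sw$ iff $y \leq w$.

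For this lemma, fix a reduced expression $w = s_1 \cdots s_m$; then $sw = s \cdot s_1 \cdots s_m$ has length $m+1 = \ell(sw)$ and is therefore reduced, and the same construction with $y$ in place of $w$ gives the key auxiliary fact $y \leq sy$. The forward implication is then immediate from the subword property. For the reverse, a reduced subexpression of $sw$ realizing $sy$ either begins with $s$, in which case deleting $s$ exhibits $y$ as a reduced subword of $w$ and gives $y \leq w$ directly, or it does not, in which case $sy$ itself is a reduced subword of $w$, giving $sy \leq w$ and hence $y \leq sy \leq w$. The main obstacle is the careful bookkeeping in the $W_\ext$-to-$W_\aff$ reduction, where the semidirect product multiplication must be tracked with some care; once inside the Coxeter group, both the induction step and the one-reflection lemma are routine.
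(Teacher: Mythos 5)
Your proof is correct, and it fleshes out exactly the route the paper only gestures at (``can be checked using the characterization of the Bruhat order in terms of reduced expressions and the exchange condition''): reduce to $W_\aff$, then induct on $\ell(x)$ so that everything comes down to the one-reflection lifting property, which you verify via the subword criterion. The reduction from $W_\ext$ to $W_\aff$ is handled correctly, since the projection to $\Omega$ being a homomorphism and $\Omega$-conjugation preserving length make the length-additivity hypotheses descend as needed.
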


%-------------------------------------------------------------------
\subsection{Coset representatives}
\label{ss:coset}
%-------------------------------------------------------------------

If $A \subset S_\aff$ is a subset, we will denote by $W_A$ the subgroup of $W_\aff$ generated by $A$; if this subgroup is finite we will say that $A$ is \emph{finitary}, and we will denote by $w_A$ the longest element in $W_A$. In this case, the theory of Coxeter systems guarantees that for any $w \in W_\ext$ the cosets $W_A w$ and $wW_A$ each admit a unique minimal element (and a unique maximal element) with respect to the Bruhat order. If $w$ is minimal in $W_A w$, resp.~in $wW_A$, then for any $x \in W_A$ we have
$\ell(xw)=\ell(x)+\ell(w)$, resp.~$\ell(wx)=\ell(w)+\ell(x)$.
In fact, it is easily seen that
\begin{equation}
\label{eqn:min-length}
\text{$w$ is minimal in $W_A w$ iff $\ell(w_A w)=\ell(w_A)+\ell(w)$.}
\end{equation}

The following claim is well known (see e.g.~the discussion in~\cite[p.~86]{soergel}).

\begin{lem}
\label{lem:minimal-mult-Saff}
Let $w \in W_\ext$ be an element which is minimal in $wW_A$. If $s \in S_\aff$ and $sw$ is not minimal in $swW_A$, then $sw=wr$ for some $r \in A$; in particular, if $s \in S_\aff$ satisfies $sw<w$, then $sw$ is minimal in $swW_A$.
\end{lem}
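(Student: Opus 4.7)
The plan is to prove the lemma using the classical root-system approach to parabolic coset representatives in Coxeter groups, applied to the affine Coxeter system $(W_\aff, S_\aff)$.

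First I reduce to the case $w \in W_\aff$. Writing $w = \omega w'$ with $\omega \in \Omega$ and $w' \in W_\aff$, and $s \in S_\aff$, we have $sw = \omega \cdot (\omega^{-1} s \omega) \cdot w'$. Since $\Omega$ preserves length, $\omega^{-1} s \omega$ is an element of $W_\aff$ of length $1$ and hence lies in $S_\aff$. Because Bruhat comparison in $W_\ext$ is defined component-wise along $W_\ext = \Omega \ltimes W_\aff$ and $W_A \subset W_\aff$, minimality of $w$ in $wW_A$ is equivalent to minimality of $w'$ in $w'W_A$, and similarly for $sw$; the lemma thus reduces to the pure affine case.

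Next I invoke the standard root-system criterion: for $v \in W_\aff$, the element $v$ is minimal in $vW_A$ if and only if $v(\alpha_r)$ is a positive affine root for every $r \in A$, where $\alpha_r$ denotes the simple affine root attached to $r$. This is equivalent to requiring $\ell(vr) = \ell(v)+1$ for every $r \in A$, as in~\eqref{eqn:min-length}, and is standard from the geometric realization of $W_\aff$ going back to Iwahori--Matsumoto. Hence the hypothesis gives $w(\alpha_r) > 0$ for every $r \in A$, and the failure of minimality for $sw$ yields some $r \in A$ with $s\bigl(w(\alpha_r)\bigr) < 0$. Writing $\beta := w(\alpha_r)$, we then have $\beta > 0$ but $s\beta < 0$; the classical fact that the only positive root inverted by a simple reflection $s = s_{\alpha_s}$ is $\alpha_s$ itself forces $\beta = \alpha_s$. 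Conjugating the reflection $r$ by $w$ now yields $wrw^{-1} = s_{w(\alpha_r)} = s_{\alpha_s} = s$, i.e., $sw = wr$, which is the desired identity.

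For the "in particular" clause, suppose $sw < w$, so $\ell(sw) = \ell(w) - 1$. If $sw$ were not minimal in $swW_A$, the first part would give $sw = wr$ for some $r \in A$; but then $\ell(sw) = \ell(wr) = \ell(w)+1$ by the minimality of $w$, contradicting $\ell(sw) = \ell(w)-1$. The main obstacle, such as it is, lies in justifying the affine-root criterion $\ell(vr) = \ell(v)+1 \iff v(\alpha_r) > 0$: this takes us outside the purely combinatorial setup of~\S\ref{ss:Wext} into the geometric representation of $W_\aff$ on its affine root system. It is entirely classical, but it is the one external ingredient the proof requires.
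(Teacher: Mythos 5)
The paper states this lemma without proof, citing the discussion in Soergel~\cite[p.~86]{soergel}, so there is no in-paper argument to match against; what matters is whether your argument is correct, and it is. The root-system characterization you invoke---$v$ is minimal in $vW_A$ iff $v(\alpha_r)$ is a positive affine root for every $r \in A$, and the only positive root that a simple reflection $s$ sends negative is its own simple root $\alpha_s$---is precisely the classical machinery behind this lemma, and your deduction $\beta := w(\alpha_r) = \alpha_s$, hence $wrw^{-1} = s_\beta = s$, is correct. Two small remarks. First, the preliminary reduction from $W_\ext$ to $W_\aff$ is careful and valid (it uses that $\Omega$ acts by Coxeter automorphisms, so conjugation by $\omega$ preserves $S_\aff$, and that the extended Bruhat order is defined componentwise), but it is not strictly needed: $W_\ext$ already acts on the affine hyperplanes $H_{\beta,n}$ of~\S\ref{ss:alcoves}, hence on the affine root system, and the length formula~\eqref{eqn:formula-length} is exactly the count of positive affine roots sent negative, so the criterion $\ell(vr) = \ell(v)+1 \iff v(\alpha_r) > 0$ extends directly to $v \in W_\ext$. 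Second, you are right to flag that the geometric representation of $W_\aff$ on its affine root system is the one external ingredient---the paper's~\S\ref{ss:Wext}--\ref{ss:alcoves} sets up exactly this geometry (the $H_{\beta,n}$, the alcoves, and the Iwahori--Matsumoto length formula), so the ingredient is entirely consistent with the paper's framework even if the lemma's proof was outsourced to Soergel.
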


Below we will consider the restriction of the Bruhat order to the subset of elements $w$ in $W_\ext$ which are minimal in $wW_A$ (resp.~in $W_Aw$). If $y,w \in W_\ext$ are minimal in their respective cosets $yW_A$ and $wW_A$, and if $y',w'$ are the maximal elements in these cosets, it is a standard fact (see~\cite[Lemma~2.2]{douglass}) that the following conditions are equivalent:
\begin{enumerate}
\item 
\label{eqn:order-representatives-1}
$y \leq w$;
\item 
\label{eqn:order-representatives-2}
$y' \leq w'$;
\item 
\label{eqn:order-representatives-3}
there exists $y'' \in yW_A$ and $w'' \in wW_A$ such that $y'' \leq w'$'.
\end{enumerate}
Of course, a similar property holds for cosets in $W_A \backslash W_\ext$.

\begin{rmk}
\label{rmk:order}
One can similarly consider minimal and maximal elements in double cosets of the form $W_A w W_{A'}$ where $A,A' \subset S_\aff$ are finitary subsets. The analogues of~\eqref{eqn:order-representatives-1}--\eqref{eqn:order-representatives-3} are also equivalent in this setting, as proved in~\cite[Lemma~2.2]{douglass}. 
%Below we will use in particular the fact that if $y,w \in W_\ext$ are minimal in their cosets $WwW$ and $WyW$, and if $y',w'$ are the maximal elements in $WwW$ and $WyW$ respectively, then $y \leq w$ iff $y' \leq w'$.
\end{rmk}

In particular, we will consider these notions in the case $A=S$, so that $W_A=W$. (In this case, we have already introduced the notation $w_\circ$ for the longest element in $W$, so that the notation $w_S$ will not be used.) The maximal and minimal elements in cosets
can be described explicitly in this case, as follows. First one notices that the quotients $W_\ext / W$ and $W \backslash W_\ext$ are in canonical bijection with $\bY$, so that every right coset is of the form $W t_\lambda$, and likewise for left cosets. For any $\lambda \in \bY$ the minimal element in $Wt_\lambda$, resp.~$t_\lambda W$, will be denoted $\minL_\lambda$, resp.~$\minR_\lambda$; we will also denote by $\mathsf{dom}(\lambda)$ the unique dominant $W$-translate of $\lambda$. By~\cite[Lemma~2.4]{mr} we have 
\begin{equation}
\label{eqn:formula-minL}
\minL_\lambda = v_\lambda t_\lambda = t_{\mathsf{dom}(\lambda)} v_\lambda,
\end{equation}
where $v_\lambda \in W$ is the element of minimal length such that $v_\lambda(\lambda)=\mathsf{dom}(\lambda)$. We moreover have
\[
\ell(\minL_\lambda)=\ell(t_\lambda)-\ell(v_\lambda)=\ell(t_{\mathsf{dom}(\lambda)})-\ell(v_\lambda).
\]
We clearly have 
\begin{equation}
\label{eqn:formula-minR}
\minR_\lambda=(\minL_{-\lambda})^{-1},
\end{equation}
and the maximal element in $Wt_\lambda$, resp.~in $t_\lambda W$, is $w_\circ \minL_\lambda$, resp.~$\minR_\lambda w_\circ$.

We will denote by $W_\ext^S \subset W_\ext$ the subset of elements $w$ which are minimal in their coset $wW$; we therefore have
$W_\ext^S = \{\minR_\lambda : \lambda \in \bY\}$.

In the following lemma we characterize the elements in $W_\ext^S$ which satisfy a certain minimality property with respect to left multiplication by elements of $W_A$. (This statement makes sense, and holds true with identical proof, for any choice of a Coxeter system and a finitary pair of subsets of the simple reflections.)

\begin{lem}
\label{lem:double-min}
Let $A \subset S_\aff$ be a finitary subset. For $w \in W_\ext$, the following conditions are equivalent:
\begin{enumerate}
\item 
\label{it:min-0}
$w \in W_\ext^S$ and $wv$ is minimal in $W_A wv$ for any $v \in W$;
\item 
\label{it:min-1}
$w \in W_\ext^S$ and $ww_\circ$ is minimal in $W_A w w_\circ$;
\item 
\label{it:min-2}
$w$ is minimal in $W_A w$ and $v w \in W_\ext^S$ for any $v \in W_A$;
\item 
\label{it:min-3}
$w$ is minimal in $W_A w$ and $w_A w \in W_\ext^S$;
\item 
\label{it:min-4}
$\ell(w_A w w_\circ) = \ell(w_A) + \ell(w) + \ell(w_\circ)$.
\end{enumerate}
\end{lem}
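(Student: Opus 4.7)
The strategy is to prove the cyclic chain of implications $(1)\Rightarrow(2)\Rightarrow(5)\Rightarrow(4)\Rightarrow(3)\Rightarrow(1)$. The only tools needed are the additive characterization of minimality recalled in~\eqref{eqn:min-length}, namely that $w$ is minimal in $W_A w$ iff $\ell(xw)=\ell(x)+\ell(w)$ for all $x\in W_A$ (equivalently, iff $\ell(w_A w)=\ell(w_A)+\ell(w)$), together with its mirror for right cosets: $w\in W_\ext^S$ iff $\ell(wv)=\ell(w)+\ell(v)$ for all $v\in W$, equivalently iff $\ell(ww_\circ)=\ell(w)+\ell(w_\circ)$. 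I will also use the standard fact that for any $v\in W_A$ one has $\ell(w_A)=\ell(w_Av^{-1})+\ell(v)$, since $w_A$ is the longest element of the finite Coxeter group $W_A$.

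The implication $(1)\Rightarrow(2)$ is the specialization $v=w_\circ$, and $(2)\Rightarrow(5)$ is simply the addition of the two length-additivity statements. For $(5)\Rightarrow(4)$, the trivial bounds
\[
\ell(w_A w w_\circ) \le \ell(w_A w) + \ell(w_\circ) \le \ell(w_A) + \ell(w) + \ell(w_\circ)
\]
are forced to be equalities by (5); the first equality says $w_A w\in W_\ext^S$, and the second says $w$ is minimal in $W_A w$.

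The main step is $(4)\Rightarrow(3)$. Assume (4); then (5) already follows, as just observed. For an arbitrary $v\in W_A$, insert the decomposition $w_A=(w_Av^{-1})\cdot v$ with additive length to get
\[
\ell(w_A ww_\circ) \le \ell(w_Av^{-1})+\ell(vww_\circ) = \ell(w_A)-\ell(v)+\ell(vww_\circ).
\]
Combining with (5) yields $\ell(vww_\circ)\ge \ell(v)+\ell(w)+\ell(w_\circ)$, while the opposite inequality $\ell(vww_\circ)\le \ell(vw)+\ell(w_\circ)\le \ell(v)+\ell(w)+\ell(w_\circ)$ is automatic. Equality throughout gives both $\ell(vw)=\ell(v)+\ell(w)$ and $\ell(vww_\circ)=\ell(vw)+\ell(w_\circ)$, the latter saying $vw\in W_\ext^S$. (The first equality was already known from (4) but reappears uniformly in $v$.) This is the one place in the argument where one must do more than read off tautologies, so it is where I expect the bookkeeping to be most delicate.

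Finally $(3)\Rightarrow(1)$ is pure bookkeeping: taking $v=e$ in (3) gives $w\in W_\ext^S$, hence $\ell(wv')=\ell(w)+\ell(v')$ for all $v'\in W$; taking $v=w_A$ in (3) gives $w_Aw\in W_\ext^S$, hence
\[
\ell(w_A w v')=\ell(w_A w)+\ell(v')=\ell(w_A)+\ell(w)+\ell(v')=\ell(w_A)+\ell(wv'),
\]
which is exactly the statement that $wv'$ is minimal in $W_A wv'$.
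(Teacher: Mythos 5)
Your proof is correct, and it takes a genuinely different route from the paper. The paper establishes $(1)\Leftrightarrow(2)$ via a specialization in one direction and via (the right-coset analogue of) Lemma~\ref{lem:minimal-mult-Saff} in the other, then proves $(2)\Leftrightarrow(5)$ by a squeeze, and finally handles $(3)$ and $(4)$ by invoking the $A\leftrightarrow S$ symmetry of condition~\eqref{it:min-4}. Your argument instead closes a single cyclic chain using nothing beyond length additivity and triangle inequalities; in particular you never invoke Lemma~\ref{lem:minimal-mult-Saff}, nor do you appeal to symmetry. The extra idea that makes this possible is the decomposition $w_A w w_\circ = (w_A v^{-1})(v w w_\circ)$ for arbitrary $v\in W_A$, combined with $\ell(w_A)=\ell(w_A v^{-1})+\ell(v)$, which lets you squeeze out the ``for all $v\in W_A$'' statement of~\eqref{it:min-2} directly from the single numerical identity~\eqref{it:min-4}. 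This is slightly more elementary and arguably more illuminating about why~\eqref{it:min-4} is so strong; the paper's route is more modular, since Lemma~\ref{lem:minimal-mult-Saff} is a standard tool reused elsewhere and the symmetry observation keeps the proof short.

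One presentational nit: in the step $(4)\Rightarrow(3)$ you write ``then~\eqref{it:min-4} already follows, as just observed,'' but what you had actually observed immediately prior was the implication $(5)\Rightarrow(4)$. The converse $(4)\Rightarrow(5)$ is indeed immediate from the same two-line chain of inequalities, but it would be cleaner to say that explicitly (it amounts to: $w$ minimal in $W_A w$ gives $\ell(w_A w)=\ell(w_A)+\ell(w)$, and $w_A w\in W_\ext^S$ gives $\ell(w_A w w_\circ)=\ell(w_A w)+\ell(w_\circ)$). With that clarification, the argument is complete.
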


We will denote by ${}^A W^S_\ext \subset W_\ext^S$ the subset of elements which satisfy the conditions of Lemma~\ref{lem:double-min}.

\begin{proof}
Of course~\eqref{it:min-0} implies~\eqref{it:min-1}, and~\eqref{it:min-1} implies~\eqref{it:min-0} by (the right-coset analogue of) Lemma~\ref{lem:minimal-mult-Saff}. It is clear that~\eqref{it:min-1} implies~\eqref{it:min-4}. If~\eqref{it:min-4} holds, then by~\eqref{eqn:min-length} $w w_\circ$ is minimal in $W_A w w_\circ$. We also deduce that
\[
\ell(w w_\circ) \geq \ell(w_A w w_\circ) - \ell(w_A) = \ell(w) + \ell(w_\circ),
\]
hence $\ell(w w_\circ)=\ell(w) + \ell(w_\circ)$, which implies that $w$ belongs to $W_\ext^S$ by~\eqref{eqn:min-length}.
The equivalence with~\eqref{it:min-2}--\eqref{it:min-3} is obtained similarly, switching the roles of $A$ and $S$.
\end{proof}

As explained in Remark~\ref{rmk:order},
there is a general theory of minimal elements in double cosets in Coxeter groups. If $w \in {}^A W^S_\ext$ then $w$ is minimal in $W_A w W$; however not every element which is minimal in its double coset belongs to ${}^A W^S_\ext$.  Specifically, one can show that the minimal element of a double coset $W_A w W$ lies in ${}^A W^S_\ext$ if and only if the set $W_A w W \cap W^S_\ext$ has cardinality equal to that of $W_A$.  In the special case where $A = S$, the following lemma gives another description of this set.

\begin{lem}
\label{lem:min-LR}
Let $\lambda \in \bY$. We have $\minR_\lambda \in {}^S W_\ext^S$ iff
$\lambda \in \bY_{++}$. Moreover in this case we have $\minR_\lambda=t_\lambda w_\circ$.
\end{lem}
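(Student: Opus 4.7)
The plan is to apply Lemma~\ref{lem:double-min} with $A = S$, using the equivalent characterization that $w \in W_\ext$ lies in ${}^S W_\ext^S$ iff $w$ is minimal in $Ww$ and $w_\circ w \in W_\ext^S$. The explicit formulas~\eqref{eqn:formula-minL}--\eqref{eqn:formula-minR} give the two useful presentations $\minR_\lambda = (\minL_{-\lambda})^{-1} = t_\lambda v_{-\lambda}^{-1} = v_{-\lambda}^{-1} t_\mu$, where $\mu := -\mathsf{dom}(-\lambda)$ is the antidominant element of the $W$-orbit of $\lambda$; in particular $v_{-\lambda}^{-1} \in W$ shows that the left $W$-coset of $\minR_\lambda$ is $W t_\mu$, whose minimal element is $\minL_\mu$.

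For the ``only if'' direction, minimality of $\minR_\lambda$ in $W t_\mu$ forces $\minR_\lambda = \minL_\mu$. Comparing the factorizations $\minR_\lambda = t_\lambda v_{-\lambda}^{-1}$ and $\minL_\mu = t_{\mathsf{dom}(\mu)} v_\mu$ via the uniqueness of the $W_\ext = W \ltimes \bY$-decomposition yields $\lambda = \mathsf{dom}(\mu) = \mathsf{dom}(\lambda)$, so $\lambda \in \bY_+$; in particular $\mu = w_\circ(\lambda)$. A short calculation shows $\minR_\mu = t_\mu$ (since $-\mu$ is dominant, $v_{-\mu} = e$). The condition $w_\circ \minR_\lambda \in W_\ext^S$ then reads $w_\circ \minR_\lambda = \minR_{w_\circ(\lambda)} = t_\mu$; rewriting $w_\circ \minR_\lambda = t_{w_\circ(\lambda)} w_\circ v_{-\lambda}^{-1} = t_\mu w_\circ v_{-\lambda}^{-1}$, this reduces to $v_{-\lambda} = w_\circ$. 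But $v_{-\lambda}$ is by definition the minimum-length element of the coset $w_\circ W_\lambda$ (the set of $u \in W$ with $u(\lambda) = w_\circ(\lambda) = \mu$), which equals $w_\circ w_\lambda$ by the standard identity $\ell(w_\circ u) = \ell(w_\circ) - \ell(u)$ for $u \in W$. Hence $w_\lambda = e$, so $\lambda$ has trivial stabilizer; combined with dominance, this gives $\lambda \in \bY_{++}$.

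Conversely, if $\lambda \in \bY_{++}$ then the stabilizer $W_\lambda$ is trivial, so the unique element of $W$ sending $\lambda$ to $w_\circ(\lambda)$ is $w_\circ$, forcing $v_{-\lambda} = w_\circ$; hence $\minR_\lambda = t_\lambda w_\circ^{-1} = t_\lambda w_\circ$, which is the desired formula. Running the computations of the previous paragraph in reverse now verifies both conditions of Lemma~\ref{lem:double-min}, so $\minR_\lambda \in {}^S W_\ext^S$.

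The main obstacle is the identification $v_{-\lambda} = w_\circ w_\lambda$, where $w_\lambda$ is the longest element of the parabolic stabilizer $W_\lambda$; this rests on the standard length identity $\ell(w_\circ u) = \ell(w_\circ) - \ell(u)$ for $u \in W$. Once that is in hand, the argument is essentially careful bookkeeping between $\lambda$, its dominant translate $\mathsf{dom}(\lambda)$, and its antidominant translate $\mu$.
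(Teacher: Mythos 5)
Your proof is correct. You invoke characterization~\eqref{it:min-3}/\eqref{it:min-4} of Lemma~\ref{lem:double-min} (namely: $w$ minimal in $Ww$ and $w_\circ w \in W_\ext^S$), whereas the paper's proof uses characterization~\eqref{it:min-1} ($w \in W_\ext^S$ and $ww_\circ$ minimal in $Www_\circ$). The paper's choice is slightly more economical: the first half of condition~\eqref{it:min-1} is automatic for $w = \minR_\lambda$, so only one minimality check remains; by contrast your route requires both halves of condition~\eqref{it:min-4}, with the first half pinning down $\lambda$ dominant and the second pinning down $v_{-\lambda} = w_\circ$. On the other hand, you make explicit a point that the paper leaves as ``clearly equivalent'': the identity $v_{-\lambda} = w_\circ w_\lambda$, from which $v_{-\lambda} = w_\circ$ iff $W_\lambda$ is trivial follows immediately. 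One small remark: once you have $v_{-\lambda} = w_\circ$, that identity alone already gives $\lambda \in \bY_+$ as well (since $w_\circ(-\lambda) = \mathsf{dom}(-\lambda)$ forces $\lambda$ dominant), so the detour through the first half of condition~\eqref{it:min-4} to establish dominance separately is not strictly needed, though of course harmless.
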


\begin{proof}
We will use the characterization of ${}^S W^S_\ext$ given by condition~\eqref{it:min-1} in Lem\-ma~\ref{lem:double-min}.
Using~\eqref{eqn:formula-minL} and~\eqref{eqn:formula-minR} we see that
\[
\minR_\lambda w_\circ = (v_{-\lambda})^{-1} t_{-\mathsf{dom}(-\lambda)} w_\circ = (v_{-\lambda})^{-1} w_\circ t_{-w_\circ\mathsf{dom}(-\lambda)} = (v_{-\lambda})^{-1} w_\circ t_{\mathsf{dom}(\lambda)}
\]
since $\mathsf{dom}(-\lambda)=-w_\circ\mathsf{dom}(\lambda)$.
Now $\minL_{\mathsf{dom}(\lambda)}=t_{\mathsf{dom}(\lambda)}$ by~\eqref{eqn:formula-minL}, so that $\minR_\lambda w_\circ$ is minimal in $W\minR_\lambda w_\circ$ iff $(v_{-\lambda})^{-1} w_\circ=e$, i.e.~iff $v_{-\lambda}=w_\circ$. This is clearly equivalent to the condition that $-\lambda \in -\bY_{++}$, i.e.~that $\lambda \in \bY_{++}$.
\end{proof}

%--------------------------------------------------------------
\subsection{Alcoves}
\label{ss:alcoves}
%--------------------------------------------------------------

Consider the vector space $V:=\bY \otimes_\Z \mathbb{R}$, and the action of $W_\ext$ given by
$(t_\lambda w) \cdot v = w(v) + \lambda$
for $w \in W$ and $\lambda \in \bY$, where $W$ acts on $V$ via its natural action on $\bY$. In $V$ we have the affine hyperplanes defined by
\[
H_{\beta,n} := \{v \in V \mid \langle \beta,v \rangle = n\}
\]
for $\beta \in \fR$ and $n \in \Z$, which are permuted by the action of $W_\ext$. The connected components of the complement of the union of these hyperplanes are called alcoves; if we set
\[
\mathfrak{A}_{\mathrm{fund}} := \{v \in V \mid \forall \beta \in \fR_+, \, 0 < \langle \beta,v \rangle < 1\},
\]
then $\mathfrak{A}_{\mathrm{fund}}$ is an alcove (called the \emph{fundamental alcove}), and moreover if we denote by $\mathscr{A}$ the set of alcoves then the assignment $w \mapsto w(\mathfrak{A}_{\mathrm{fund}})$ induces a bijection
\[
W_\ext / \Omega \simto \mathscr{A},
\]
where $\Omega$ is as in~\S\ref{ss:Wext}. If
\[
\mathcal{C} = \{v \in V \mid \forall \beta \in \fR_+, \, \langle \beta,v \rangle > 0\},
\]
then it is a standard fact that
\begin{equation}
\label{eqn:min-Wext-Afund}
W_\ext^S = \{w \in W_\ext \mid w^{-1}(\mathfrak{A}_{\mathrm{fund}}) \subset \mathcal{C}\}.
\end{equation}

%------------------------------------------------
\subsection{Restricted elements}
\label{ss:restricted}
%------------------------------------------------

For $\mu \in \bY$ we set
\[
\Pi_\mu := \{v \in V \mid \forall \alpha \in \fR_{\mathrm{s}}, \, \langle \alpha,\mu \rangle -1 < \langle \alpha, v \rangle < \langle \alpha,\mu \rangle \};
\]
our assumption on $\bX/\Z\fR$ ensures that each alcove is contained in a subset of this form. We define the subset of restricted elements in $W_\ext$ by setting
\[
W_\ext^\res := \{w \in W_\ext \mid w^{-1}(\mathfrak{A}_{\mathrm{fund}}) \subset \Pi_\varsigma\}.
\]
(Of course, this subset does not depend on the choice of $\varsigma$.)
The relation between $W_\ext^\res$ and $W_\ext^S$ is as follows: if $w \in W_\ext$, there exists $\mu \in \bY$ such that $w^{-1}(\mathfrak{A}_{\mathrm{fund}}) \subset \Pi_\mu$; then $w \in W_\ext^S$ if and only if $\mu \in \bY_{++}$. With this notation we have $t_{\varsigma-\mu} w^{-1}(\mathfrak{A}_{\mathrm{fund}}) \subset \Pi_\varsigma$, i.e.~$w t_{\mu-\varsigma}  \in W_\ext^\res$, and of course
\[
w = (w t_{\mu-\varsigma}) t_{\varsigma-\mu}.
\]
Here $\mu \in \bY_{++}$ iff $\varsigma-\mu \in -\bY_+$. In conclusion, we have shown that
\begin{equation}
\label{eqn:WS-Wres}
W_\ext^S = \{x t_{\lambda} : x \in W_\ext^\res, \, \lambda \in -\bY_+\}.
\end{equation}
(In case $G$ is semisimple, each element of $W_\ext^S$ can be written uniquely as a product $x t_{\lambda}$ with $x \in W_\ext^\res$ and $\lambda \in -\bY_+$, but in general this expression is not unique.) We will see in Lemma~\ref{lem:length-res-dom} below that lengths always add in such an expression. These considerations also show that for any fixed $w \in W_\ext$, there exists $\lambda \in \bY$ such that $w t_\lambda  \in W_\ext^\res$; in fact the elements that satisfy this property form a torsor for the lattice of elements in $\bY$ orthogonal to all roots.

\begin{lem}
\label{lem:res-elements}
Let $w \in W$, $\lambda \in \bY$. Then $w t_\lambda \in W_\ext^\res$ if and only if for all $\alpha \in \fR_{\mathrm{s}}$ we have
\[
\langle \alpha,\lambda \rangle = \begin{cases}
0 & \text{if $w(\alpha) \in \fR_+$;} \\
-1 & \text{if $w(\alpha) \in -\fR_+$.} 
\end{cases}
\]
In particular, if $w t_\lambda \in W_\ext^\res$ then $\lambda \in -\bY_+$.
\end{lem}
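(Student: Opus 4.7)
The plan is to unfold the definition of $W_\ext^\res$ directly. The membership $wt_\lambda \in W_\ext^\res$ means $(wt_\lambda)^{-1}(\mathfrak{A}_{\mathrm{fund}}) \subset \Pi_\varsigma$, and since $\langle \alpha,\varsigma\rangle = 1$ for all $\alpha \in \fRs$, the subset $\Pi_\varsigma$ is cut out inside $V$ by the open conditions $0 < \langle \alpha, \cdot\rangle < 1$ for $\alpha \in \fRs$. In the $W_\ext$-action on $V$ recalled in~\S\ref{ss:alcoves}, the element $wt_\lambda$ equals $t_{w(\lambda)}w$ and therefore acts by $v \mapsto w(v) + w(\lambda)$; its inverse then acts by $v \mapsto w^{-1}(v) - \lambda$. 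Using the $W$-equivariance identity $\langle \alpha, w^{-1}(v)\rangle = \langle w(\alpha), v\rangle$, the containment $(wt_\lambda)^{-1}(\mathfrak{A}_{\mathrm{fund}}) \subset \Pi_\varsigma$ becomes the requirement
\[
\langle \alpha, \lambda \rangle \;<\; \langle w(\alpha), v \rangle \;<\; 1 + \langle \alpha, \lambda \rangle
\]
for every $\alpha \in \fRs$ and every $v \in \mathfrak{A}_{\mathrm{fund}}$.

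The next step is to examine the image of the linear form $\langle w(\alpha), \cdot\rangle$ on $\mathfrak{A}_{\mathrm{fund}}$. Since $w(\alpha)$ is an element of $\fR$ and the fundamental alcove is defined by the strict inequalities $0 < \langle \beta, \cdot\rangle < 1$ for $\beta \in \fR_+$, this image is the open interval $(0,1)$ when $w(\alpha) \in \fR_+$ and the open interval $(-1,0)$ when $w(\alpha) \in -\fR_+$. Combined with the fact that $\langle \alpha, \lambda\rangle$ is an integer (as $\lambda \in \bY$ and $\alpha \in \fR$), the displayed two-sided bound forces $\langle \alpha, \lambda\rangle = 0$ in the first case and $\langle \alpha, \lambda\rangle = -1$ in the second. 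Conversely, these equalities obviously suffice, since $\langle w(\alpha), v\rangle$ never reaches the endpoints of the corresponding interval when $v$ varies in the open alcove.

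The ``in particular'' clause is then immediate: each value $\langle \alpha, \lambda\rangle$ for $\alpha \in \fRs$ is nonpositive, so $\lambda \in -\bY_+$. I do not expect any genuine obstacle in the argument; the only points requiring care are getting the inverse-action convention right (in particular the sign of $\lambda$) and exploiting the openness of the intervals together with the integrality of $\langle \alpha, \lambda\rangle$ to pin down the precise integer boundary value in each case.
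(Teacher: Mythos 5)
Your proof is correct and proceeds by the same basic strategy as the paper: unfold the definition of $W_\ext^\res$, compute the inverse action of $wt_\lambda$ on $V$, and translate the containment $(wt_\lambda)^{-1}(\mathfrak{A}_{\mathrm{fund}})\subset\Pi_\varsigma$ into the condition $\langle\alpha,\lambda\rangle < \langle w(\alpha),v\rangle < 1+\langle\alpha,\lambda\rangle$ for simple $\alpha$. The only tactical difference is that the paper evaluates at the single point $\tfrac{1}{N}\varsigma$ for $N\gg 0$ (using that it lies in $\mathfrak{A}_{\mathrm{fund}}$, and that membership of one alcove point in $\Pi_\varsigma$ already decides the containment), whereas you argue over all of $\mathfrak{A}_{\mathrm{fund}}$ and then pin down $\langle\alpha,\lambda\rangle$ by integrality. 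Both work; the paper's trick just sidesteps any discussion of the image of the linear form.

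One small imprecision worth flagging: you assert that the image of $\langle w(\alpha),\cdot\rangle$ on $\mathfrak{A}_{\mathrm{fund}}$ \emph{equals} $(0,1)$ (resp.\ $(-1,0)$), but the reasoning you give---that the alcove is cut out by $0<\langle\beta,\cdot\rangle<1$ for $\beta\in\fR_+$---only shows the image is \emph{contained} in that interval. Fortunately, containment together with nonemptiness of the alcove and integrality of $\langle\alpha,\lambda\rangle$ is all your argument actually uses (an integer $n$ with $n<x<n+1$ for some $x\in(0,1)$ must be $0$), so the conclusion stands; it would just be cleaner to state ``contained in'' rather than ``is.''
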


\begin{proof}
For $N \gg 0$ we have $\frac{1}{N} \varsigma \in \mathfrak{A}_{\mathrm{fund}}$; hence $w t_\lambda$ belongs to $W_\ext^\res$ if and only if $(t_{-\lambda} w^{-1}) \cdot (\frac{1}{N} \varsigma) \in \Pi_\varsigma$. Now we have
\[
\textstyle (t_{-\lambda} w^{-1}) \cdot (\frac{1}{N} \varsigma) = -\lambda + \frac{1}{N} w^{-1}(\varsigma),
\]
so that for $\alpha \in \fR_{\mathrm{s}}$ we have
\[
\textstyle
\langle \alpha, (t_{-\lambda} w^{-1})(\frac{1}{N} \varsigma) \rangle = -\langle \alpha, \lambda \rangle + \frac{1}{N} \langle w(\alpha), \varsigma \rangle.
\]
On the right-hand side we have $\langle w(\alpha), \varsigma \rangle >0$ if $w(\alpha) \in \fR_+$, and $\langle w(\alpha), \varsigma \rangle <0$ if $w(\alpha) \in -\fR_+$. This implies that the left-hand side lies between $0$ and $1$ iff $\langle \alpha,\lambda \rangle$ is $0$ in the first case, and $-1$ in the second case.
\end{proof}

\begin{lem}
\label{lem:length-res-dom}
For any $w \in W_\ext^S$ and $\mu \in -\bY_+$ we have
$\ell(w t_\mu)=\ell(t_\mu)+\ell(w)$.
\end{lem}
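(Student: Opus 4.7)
The plan is to argue geometrically via the standard interpretation of the length function on $W_\ext$ in terms of reflection hyperplanes in $V$. Namely, for $x \in W_\ext$, $\ell(x)$ equals the cardinality of the set $\mathcal{H}(x)$ of hyperplanes $H_{\beta,n}$ (with $\beta \in \fR$, $n \in \Z$) separating $\mathfrak{A}_{\mathrm{fund}}$ from $x^{-1}(\mathfrak{A}_{\mathrm{fund}})$, and for a product one has the symmetric-difference formula $\mathcal{H}(xy) = \mathcal{H}(y) \mathbin{\triangle} y^{-1} \mathcal{H}(x)$; in particular $\ell(xy) = \ell(x) + \ell(y)$ precisely when $\mathcal{H}(y) \cap y^{-1} \mathcal{H}(x) = \emptyset$. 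Applied with $x = w$ and $y = t_\mu$, and noting that $t_\mu^{-1}$ acts on $V$ by translation by $-\mu$, these two sets consist respectively of the hyperplanes separating $\mathfrak{A}_{\mathrm{fund}}$ from $\mathfrak{A}_{\mathrm{fund}} - \mu$ and those separating $\mathfrak{A}_{\mathrm{fund}} - \mu$ from $w^{-1}(\mathfrak{A}_{\mathrm{fund}}) - \mu$.

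I would then describe both sets explicitly, working in each direction $\alpha \in \fR_+$. The function $\langle\alpha, \cdot\rangle$ takes values in $(0,1)$ on $\mathfrak{A}_{\mathrm{fund}}$ and in $(-\langle\alpha,\mu\rangle, 1 - \langle\alpha,\mu\rangle)$ on $\mathfrak{A}_{\mathrm{fund}} - \mu$, with $-\langle\alpha,\mu\rangle \geq 0$ since $\mu \in -\bY_+$; so $\mathcal{H}(t_\mu)$ contributes exactly the hyperplanes $H_{\alpha, n}$ with $1 \leq n \leq -\langle\alpha,\mu\rangle$ in the $\alpha$-direction. For the second set, since translation by $-\mu$ sends $H_{\alpha, m}$ to $H_{\alpha, m - \langle\alpha, \mu\rangle}$, it suffices to describe $\mathcal{H}(w)$ itself and then shift indices.

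This is where the hypothesis $w \in W_\ext^S$ enters decisively, via~\eqref{eqn:min-Wext-Afund}: it forces $w^{-1}(\mathfrak{A}_{\mathrm{fund}}) \subset \mathcal{C}$, so $\langle\alpha, \cdot\rangle$ takes only positive values on $w^{-1}(\mathfrak{A}_{\mathrm{fund}})$, and combined with the $(0,1)$-range on $\mathfrak{A}_{\mathrm{fund}}$ this forces any separating hyperplane $H_{\alpha, m}$ to have $m \geq 1$. After translation by $-\mu$, the relevant indices in the $\alpha$-direction become $m - \langle\alpha,\mu\rangle \geq 1 - \langle\alpha,\mu\rangle > -\langle\alpha,\mu\rangle$, hence lie strictly outside $\{1, \ldots, -\langle\alpha,\mu\rangle\}$. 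This gives the required disjointness in every direction, so $\ell(wt_\mu) = \ell(w) + \ell(t_\mu)$. The only real subtlety is ensuring $w^{-1}(\mathfrak{A}_{\mathrm{fund}}) \subset \mathcal{C}$: without it a hyperplane of index $m \leq 0$ could separate $\mathfrak{A}_{\mathrm{fund}}$ from $w^{-1}(\mathfrak{A}_{\mathrm{fund}})$ and, upon translation by $-\mu$, land inside $\{1, \ldots, -\langle\alpha,\mu\rangle\}$, canceling against $\mathcal{H}(t_\mu)$ and producing a length deficit.
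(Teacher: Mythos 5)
Your proof is correct, and it takes a genuinely different route from the paper's. The paper works algebraically: it uses the decomposition~\eqref{eqn:WS-Wres} to write $w = x t_\nu$ with $x \in W_\ext^\res$ and $\nu \in -\bY_+$, then plugs into the Iwahori--Matsumoto length formula~\eqref{eqn:formula-length} and invokes Lemma~\ref{lem:res-elements} to see that all absolute values resolve with a fixed sign, so that $\ell(x t_\eta)$ is an affine function of $\eta$ on $-\bY_+$; the result then follows by comparing $\eta = \nu$ with $\eta = \nu + \mu$. You instead work directly from the geometric characterization~\eqref{eqn:min-Wext-Afund} of $W_\ext^S$ via the condition $w^{-1}(\mathfrak{A}_{\mathrm{fund}}) \subset \mathcal{C}$, together with the hyperplane-counting description of length and the cocycle (symmetric-difference) identity for $\mathcal{H}(xy)$; the disjointness check in each $\alpha$-direction is clean and entirely avoids the detour through restricted elements and Lemma~\ref{lem:res-elements}. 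Your approach is arguably more conceptual and self-contained (it uses only \S\ref{ss:alcoves}, not \S\ref{ss:restricted}), at the cost of invoking the hyperplane-counting interpretation of $\ell$, which the paper never states explicitly even though it is standard and equivalent to~\eqref{eqn:formula-length}. Both proofs ultimately rest on the same geometric fact---that $W_\ext^S$ consists of the elements carrying the fundamental alcove into the dominant cone---but you exploit it directly rather than through the algebraic shadow it casts on the length formula.
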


\begin{proof}
By~\eqref{eqn:WS-Wres} we can write $w=xt_\nu$ with $x \in W_\ext^\res$ and $\nu \in -\bY_+$.
Write $x= y t_\lambda$ with $\lambda \in \bY$ and $y \in W$. By~\eqref{eqn:formula-length}, for any $\eta \in -\bY_+$ we have
\[
\ell(xt_\eta) = \ell(yt_{\lambda+\eta}) = \sum_{\substack{\alpha \in \fR_+ \\ y(\alpha) \in \fR_+}} |\langle \alpha,\lambda+\eta \rangle| + \sum_{\substack{\alpha \in \fR_+ \\ y(\alpha) \in -\fR_+}} |1+\langle \alpha, \lambda+\eta \rangle|.
\]
By Lemma~\ref{lem:res-elements}, in the right-hand side we have $\langle \alpha, \lambda \rangle \leq 0$ for any $\alpha \in \fR_+$, and if moreover $y(\alpha) \in -\fR_+$ then at least one simple root $\gamma$ appearing in the decomposition of $\alpha$ as a sum of simple roots must satisfy $y(\gamma) \in -\fR_+$; we therefore have $\langle \alpha, \lambda \rangle \leq -1$ in this case. Letting $\rho$ denote one-half the sum of the positive roots, we see that
\[
\ell(xt_\eta) = -\langle 2\rho, \lambda+\eta \rangle - \ell(y).
\]
Comparing these formulas for $\eta=\nu$ and $\eta=\nu+\mu$, and using the fact that $\ell(t_\mu)=-\langle 2\rho,\mu \rangle$, we deduce the desired formula.
\end{proof}

%-------------------------------------------------------------------
\subsection{More on coset representatives}
\label{ss:more-coset}
%-------------------------------------------------------------------

We fix a finitary subset $A \subset S_\aff$, and consider the interaction between restricted elements and elements satisfying the conditions in Lemma~\ref{lem:double-min}.

\begin{lem}
\label{lem:double-min-antidom}
Let $y \in W_\ext^\res$ and $\lambda \in -\bY_+$. Then $y \in {}^A W^S_\ext$ iff $y t_\lambda \in {}^A W^S_\ext$.
\end{lem}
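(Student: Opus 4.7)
The plan is to reduce the equivalence to a comparison of two length identities, using the characterization in condition~\eqref{it:min-4} of Lemma~\ref{lem:double-min}: $w \in {}^A W^S_\ext$ iff $\ell(w_A w w_\circ) = \ell(w_A) + \ell(w) + \ell(w_\circ)$. Since $y \in W_\ext^\res \subset W_\ext^S$ (take $\lambda = 0$ in~\eqref{eqn:WS-Wres}), Lemma~\ref{lem:length-res-dom} applied to $y$ and $\lambda$ yields $\ell(yt_\lambda) = \ell(y) + \ell(t_\lambda)$. Using $t_\lambda w_\circ = w_\circ t_{w_\circ(\lambda)}$ together with $\ell(t_{w_\circ(\lambda)}) = \ell(t_\lambda)$ (immediate from~\eqref{eqn:formula-length} with $w = e$), the task reduces to showing that
\[
\ell(w_A y w_\circ) = \ell(w_A) + \ell(y) + \ell(w_\circ)
\]
holds if and only if the analogous equality holds with $w_A y w_\circ$ replaced by $(w_A y w_\circ) \cdot t_{w_\circ(\lambda)}$ and $\ell(t_\lambda)$ added to the right-hand side.

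For the ``if'' direction I would simply invoke the triangle inequality $\ell((w_A y w_\circ) \cdot t_{w_\circ(\lambda)}) \leq \ell(w_A y w_\circ) + \ell(t_\lambda)$, which combined with the assumed length of the left-hand side forces $\ell(w_A y w_\circ) \geq \ell(w_A) + \ell(y) + \ell(w_\circ)$; the reverse inequality holds automatically, so equality follows.

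For the ``only if'' direction, assume $y \in {}^A W^S_\ext$. Then condition~\eqref{it:min-3} of Lemma~\ref{lem:double-min} gives $w_A y \in W_\ext^S$, so~\eqref{eqn:WS-Wres} lets us factor $w_A y = x t_\nu$ with $x \in W_\ext^\res$ and $\nu \in -\bY_+$. Since $\nu + \lambda \in -\bY_+$, the element $w_A y t_\lambda = x t_{\nu+\lambda}$ again lies in $W_\ext^S$ by~\eqref{eqn:WS-Wres}. Applying Lemma~\ref{lem:length-res-dom} to the pair $(w_A y, \lambda)$ and then the defining length identity of $W_\ext^S$ for $w_A y t_\lambda$, one obtains
\[
\ell(w_A y t_\lambda w_\circ) = \ell(w_A y t_\lambda) + \ell(w_\circ) = \ell(w_A y) + \ell(t_\lambda) + \ell(w_\circ) = \ell(w_A) + \ell(y) + \ell(t_\lambda) + \ell(w_\circ),
\]
which is precisely the required equality. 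I do not anticipate a genuine obstacle here; the only point requiring care is recognizing that right multiplication of an element of $W_\ext^S$ by an antidominant translation keeps one inside $W_\ext^S$, which is exactly what~\eqref{eqn:WS-Wres} supplies.
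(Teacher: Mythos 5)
Your proof is correct and follows essentially the same route as the paper: both directions rest on Lemma~\ref{lem:length-res-dom}, the equivalent characterizations of ${}^A W^S_\ext$ in Lemma~\ref{lem:double-min}, and (for the direction $y t_\lambda \Rightarrow y$) the same triangle-inequality squeeze. The only cosmetic difference is that you phrase everything uniformly through condition~\eqref{it:min-4}, whereas the paper switches between conditions~\eqref{it:min-3} and~\eqref{it:min-4}, and you re-derive the stability of $W_\ext^S$ under right multiplication by antidominant translations from~\eqref{eqn:WS-Wres} rather than citing it.
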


\begin{proof}
Assume that $y \in {}^A W^S_\ext$, i.e.~that $y$ is minimal in $W_A y$ and $w_A y \in W_\ext^S$. Then
\[
\ell(w_A y t_\lambda) = \ell(w_A y) + \ell(t_\lambda) = \ell(w_A) + \ell(y) + \ell(t_\lambda) = \ell(w_A) + \ell(yt_\lambda)
\]
where the first and last equalities use Lemma~\ref{lem:length-res-dom}. Hence $y t_\lambda$ is minimal in $W_A yt_\lambda$ by~\eqref{eqn:min-length}. On the other hand $w_A y t_\lambda = (w_A y) t_\lambda$ belongs to $W_\ext^S$ since $W_\ext^S$ is stable under right multiplication by elements of $-\bY_+$; hence $yt_\lambda \in {}^A W^S_\ext$.

Assume now that $yt_\lambda \in {}^A W^S_\ext$. We have
\begin{multline*}
\ell(w_A y w_\circ t_{w_\circ(\lambda)}) = \ell(w_A y t_\lambda w_\circ) = \ell(w_A) + \ell(yt_\lambda) + \ell(w_\circ) \\
= \ell(w_A) + \ell(y) + \ell(t_\lambda) + \ell(w_\circ).
\end{multline*}
On the other hand we have
\[
\ell(w_A y w_\circ t_{w_\circ(\lambda)}) \leq \ell(w_A y w_\circ) + \ell(t_{w_\circ(\lambda)}) \leq \ell(w_A) + \ell(y) + \ell(w_\circ) + \ell(t_{w_\circ(\lambda)}),
\]
and $\ell(t_\lambda) = \ell(t_{w_\circ(\lambda)})$.
 Thus, these inequalities must be equalities, showing in particular that $\ell(w_A y w_\circ) = \ell(w_A) + \ell(y) + \ell(w_\circ)$, and hence that $y \in {}^A W^S_\ext$.
\end{proof}

If we set
${}^A W^\res_\ext := {}^A W^S_\ext \cap W_\ext^\res$,
then by~\eqref{eqn:WS-Wres} and Lemma~\ref{lem:double-min-antidom} we have
\begin{equation}
\label{eqn:WS-Wres-Whit}
{}^A W^S_\ext = \{wt_\lambda : w \in {}^A W^\res_\ext, \, \lambda \in -\bY_+ \}.
\end{equation}

%%%%%%%%%%%%%%%%%%%%%%%%%%%%%%%%%
\section{Whittaker-type perverse sheaves on affine Grassmannians and affine flag varieties}
\label{sec:perverse}
%%%%%%%%%%%%%%%%%%%%%%%%%%%%%%%%%

%-------------------------------------------------------------------
\subsection{Affine Grassmannian and affine flag variety}
%-------------------------------------------------------------------

We now denote by $z$ an indeterminate, and consider the functor $\cL G$, resp.~$\cL^+ G$, from $\F$-algebras to groups, which sends $R$ to $G(R( \hspace{-1pt} (z) \hspace{-1pt} ))$, resp.~$G(R[ \hspace{-1pt} [z] \hspace{-1pt} ])$. It is well known (see e.g.~\cite{richarz}) that $\cL G$ is represented by a group ind-scheme over $\F$, and that $\cL^+ G$ is represented by a group scheme over $\F$. Moreover, the fppf quotient $(\cL G/\cL^+G)_{\mathrm{fppf}}$ is represented by an ind-projective ind-scheme, which is denoted $\Gr$ and called the \emph{affine Grassmannian} of $G$.

There is an obvious morphism of group schemes $\cL^+G \to G$ induced by the assignment $z \mapsto 0$.  Let $I \subset \cL^+G$ and $I_\unip \subset I$ be the preimages under this map of the Borel subgroup $B \subset G$ and its unipotent radical $U \subset B$, respectively. These are both subgroup schemes of $\cL^+G$.  The group $I$ is known as an \emph{Iwahori subgroup}, and $I_\unip$ as its pro-unipotent radical. 

We will consider also the affine flag variety $\Fl$ of $G$, defined as the fppf quotient $(\cL G/I)_{\mathrm{fppf}}$. Again $\Fl$ is represented by an ind-projective ind-scheme, and the natural morphism $\pi : \Fl \to \Gr$ is a Zariski locally trivial fibration with fibers isomorphic to $G/B$. 

Let $\Norm_G(T)$ be the normalizer of the maximal torus $T \subset G$, so that $\Norm_G(T)/T = W$.  For each $w \in W$, choose a representative $\dot w \in \Norm_G(T)$.  More generally, if $w \in W_\ext$, say $w = vt_\lambda$ with $v \in W$ and $\lambda \in \bY$, we set
\[
\dot w = \dot v z^\lambda \quad \in \cL G(\F).
\]

For $w \in W_\ext$ we will denote by $\Fl_w$ the $I$-orbit of the image of $\dot{w}$ in $\Fl$; then it is well known that $\Fl_w$ is also the $I_\unip$-orbit of the image of $\dot{w}$, that it is isomorphic to an affine space of dimension $\ell(w)$, and that we have
\[
\Fl_{\mathrm{red}} = \bigsqcup_{w \in W_\ext} \Fl_w \quad \text{and} \quad
%\]
%and for any $w,y \in W_\ext$ we have
%\[
\bigl( \overline{\Fl_w} \subset \overline{\Fl_y} \quad \Leftrightarrow \quad w \leq y \bigr).
\]

Similarly, for $w \in W_\ext^S$ we will denote by $\Gr_w$ the $I$-orbit of the image of $\dot{w}$ in $\Gr$. It is well known that $\Gr_w$ is also the $I_\unip$-orbit of the image of $\dot{w}$, that it is isomorphic to an affine space of dimension $\ell(w)$, and that we have
\[
\Gr_{\mathrm{red}} = \bigsqcup_{w \in W_\ext} \Gr_w
%\]
\quad \text{and} \quad
%\[
\pi^{-1}(\Gr_w)=\bigsqcup_{v \in W} \Fl_{wv}.
\]
The closure inclusion partial order on the set of $I$-orbits on $\Gr$ is governed by the restriction of the Bruhat order to $W_\ext^S$ (see~\S\ref{ss:coset}), i.e.~for $w,y \in W_\ext^S$ we have
\[
\overline{\Gr_w} \subset \overline{\Gr_y} \quad \Leftrightarrow \quad w \leq y.
\]

\begin{rmk}
It is common to label $I$-orbits on $\Gr$ by elements of $\bY$; compared with the labelling chosen here, the orbit usually associated with $\lambda$ is $\Gr_{\minR_\lambda}$ where we use the notation of~\S\ref{ss:coset}.
\end{rmk}

%-------------------------------------------------------------------
\subsection{Categories of \texorpdfstring{$I_\unip$}{Iu}-equivariant sheaves}
\label{ss:cat-Iequ-sheaves}
%-------------------------------------------------------------------

We now consider a prime number $\ell$ which is invertible in $\F$. We will consider fields $\bk$ which fall into one of the following two classes:
\begin{enumerate}
\item
$\bk$ is either a finite extension or an algebraic closure of $\mathbb{Q}_\ell$;
\item
$\bk$ is either a finite extension or an algebraic closure of $\mathbb{F}_\ell$.
\end{enumerate}
(When we need to distinguish these two cases, we will loosely say that $\bk$ has characteristic $0$ or $\bk$ has positive characteristic.)
In these settings we can consider the $I_\unip$-equivariant derived categories $\Db_{I_\unip}(\Gr,\bk)$ and $\Db_{I_\unip}(\Fl,\bk)$ of \'etale $\bk$-sheaves on $\Gr$ and $\Fl$ respectively. (More specifically, the case when $\bk$ is a finite extension of $\mathbb{Q}_\ell$ or $\mathbb{F}_\ell$ is classical,\footnote{Since $I_\unip$ is not of finite type and $\Gr,\Fl$ are ind-schemes rather than schemes, the definition of these categories requires a little bit of care, but is standard; we will not review these details here. Similar comments apply to various other equivariant derived categories considered below.} see~\cite{bbd}, and the case of algebraic closures is deduced using a colimit construction.) These categories have natural perverse t-structures, whose hearts will be denoted $\Perv_{I_\unip}(\Gr,\bk)$ and $\Perv_{I_\unip}(\Fl,\bk)$ respectively.

For any $w \in W_\ext$ we have a ``standard perverse sheaf'' $\DFl_w$ in $\Perv_{I_\unip}(\Fl,\bk)$, defined as the $!$-pushforward of the complex $\underline{\bk}_{\Fl_w}[\ell(w)]$ under the embedding $\Fl_w \to \Fl$, and a ``costandard perverse sheaf'' $\NFl_w$ in $\Perv_{I_\unip}(\Fl,\bk)$, defined as the $*$-pushforward of the complex $\underline{\bk}_{\Fl_w}[\ell(w)]$ under the embedding $\Fl_w \to \Fl$. (These complexes are indeed perverse sheaves since this embedding is affine.) The image of the unique (up to scalar) nonzero morphism $\DFl_w \to \NFl_w$ is simple, and will be denoted $\LFl_w$; it is the intersection cohomology complex associated with the constant local system on ${\Fl_w}$. Then the objects $(\LFl_w : w \in W_\ext)$ are representatives for the isomorphism classes of simple objects in the abelian category $\Perv_{I_\unip}(\Fl,\bk)$.

Similarly, for $w \in W_\ext^S$ we have a ``standard perverse sheaf'' $\DGr_w$ in $\Perv_{I_\unip}(\Gr,\bk)$, defined as the $!$-pushforward of the complex $\underline{\bk}_{\Gr_w}[\ell(w)]$ under the embedding $\Gr_w \to \Gr$, and a ``costandard perverse sheaf'' $\NGr_w$ in $\Perv_{I_\unip}(\Gr,\bk)$, defined as the $*$-pushforward of the complex $\underline{\bk}_{\Gr_w}[\ell(w)]$ under the embedding $\Gr_w \to \Gr$. (Once again, these complexes are indeed perverse sheaves.)
The image of the unique (up to scalar) nonzero morphism $\DGr_w \to \NGr_w$ is simple, and will be denoted $\LGr_w$; it is the intersection cohomology complex associated with the constant local system on ${\Gr_w}$. Then the objects $(\LGr_w : w \in W^S_\ext)$ are representatives for the isomorphism classes of simple objects in the abelian category $\Perv_{I_\unip}(\Gr,\bk)$.

Since the morphism $\pi : \Fl \to \Gr$ is smooth with connected fibers, the functor
\[
\pi^\dag:=\pi^*[\dim(G/B)] \cong \pi^![-\dim(G/B)] : \Db_{I_\unip}(\Gr,\bk) \to \Db_{I_\unip}(\Fl,\bk)
\]
is t-exact for the perverse t-structures, its restriction to perverse sheaves is fully faithful, and it sends simple perverse sheaves to simple perverse sheaves, see~\cite[Proposition~4.2.5]{bbd}; more explicitly, in this case we have
\begin{equation}
\label{eqn:pidag-IC}
\pi^\dag \LGr_w \cong \LFl_{ww_\circ}
\end{equation}
for any $w \in W_\ext^S$. 

The results of~\cite[\S 3.3]{bgs} show that the category $\Perv_{I_\unip}(\Fl,\bk)$ admits a natural structure of a highest weight category (in the sense of~\cite[\S 7]{riche-hab}) with weight poset $(W_\ext, \leq)$, standard objects the standard perverse sheaves $(\DFl_w : w \in W_\ext)$, and costandard objects the costandard perverse sheaves $(\NFl_w : w \in W_\ext)$. 
Similar comments apply to the category $\Perv_{I_\unip}(\Gr,\bk)$ (where the weight poset is now $W_\ext^S$, equipped with the restriction of the Bruhat order, and $\DFl_w, \NFl_w$ are replaced by $\DGr_w,\NGr_w$).

We will also occasionally consider the $I$-equivariant derived categories $\Db_I(\Fl,\bk)$ and $\Db_I(\Gr,\bk)$. We have forgetful functors
\[
\For^I_{I_\unip} : \Db_I(\Fl,\bk) \to \Db_{I_\unip}(\Fl,\bk), \quad \For^I_{I_\unip} : \Db_I(\Gr,\bk) \to \Db_{I_\unip}(\Gr,\bk),
\]
and the objects $\DFl_w, \NFl_w$ and $\DGr_w,\NGr_w$ naturally ``lift" to objects of $\Db_I(\Fl,\bk)$ and $\Db_I(\Gr,\bk)$ respectively (which will be denoted by the same symbol). We also have ``convolution" bifunctors
\begin{gather*}
\Db_I(\Fl,\bk) \times \Db_I(\Fl,\bk) \to \Db_I(\Fl,\bk), \quad \Db_I(\Fl,\bk) \times \Db_I(\Gr,\bk) \to \Db_I(\Gr,\bk), \\
\Db_{I_\unip}(\Fl,\bk) \times \Db_I(\Fl,\bk) \to \Db_{I_\unip}(\Fl,\bk), \quad \Db_{I_\unip}(\Fl,\bk) \times \Db_I(\Gr,\bk) \to \Db_{I_\unip}(\Gr,\bk),
\end{gather*}
which will all be denoted $\star^I$, and are compatible in all the expected ways. 

%-------------------------------------------------------------------
\subsection{Relation with the Satake category}
\label{ss:Satake-category}
%-------------------------------------------------------------------

Below we will also consider the $\cL^+G$-equiva\-riant derived category $\Db_{\cL^+ G}(\Gr,\bk)$. Once again this category has a natural perverse t-structure, whose heart will be denoted $\Perv_{\cL^+ G}(\Gr,\bk)$. For $\lambda \in \bY_+$ we will denote by $L_\lambda$ the image of $z^\lambda$ in $\Gr$, and by $\Gr^\lambda$ its $\cL^+G$-orbit; then we have
\[
\Gr^\lambda = \bigsqcup_{\mu \in W(\lambda)} \Gr_{\minR_\mu}
\quad
\text{and}
\quad
\Gr_{\mathrm{red}} = \bigsqcup_{\lambda \in \bY_+} \Gr^\lambda.
\]
The closure partial order on the set of $\cL^+G$-orbits on $\Gr$ is determined by the restriction of the Bruhat order to the set of elements $w \in W_\ext$ that are minimal in $WwW$: see Remark~\ref{rmk:order}. More explicitly, for $\lambda \in \bY_+$, the maximal element in $W t_\lambda W$ is $w_\circ t_\lambda$, so that for $\lambda,\mu \in \bY_+$ we have
\[
\overline{\Gr^\lambda} \subset \overline{\Gr^\mu} \quad \Leftrightarrow \quad w_\circ t_\lambda \leq w_\circ t_\mu.
\]
(It is a standard fact that this condition is also equivalent to the property that $\mu-\lambda$ is a sum of positive coroots.)

The simple objects in the category $\Perv_{\cL^+ G}(\Gr,\bk)$ are in natural bijection with $\bY_+$, via the operation sending $\lambda$ to the intersection cohomology complex $\IC^\lambda$ associated with the constant local system on $\Gr^\lambda$. The forgetful functor
\[
\For^{\cL^+G}_{I_\unip} : \Db_{\cL^+ G}(\Gr,\bk) \to \Db_{I_\unip}(\Gr,\bk)
\]
is t-exact, restricts to a fully faithful functor on perverse sheaves, and satisfies
\[
\For^{\cL^+G}_{I_\unip}(\IC^\lambda)=\LGr_{t_{w_\circ(\lambda)}}
\]
for any $\lambda \in \bY_+$.

To each $\lambda \in \bY_+$ one can also associate the ``standard'' and ``costandard'' objects defined respectively by
\[
\cI_!^\mu = {}^{\mathrm{p}}\hspace{-1pt} \tau^{\geq 0}(j^\mu_! \underline{\bk}_{\Gr^\mu}[\langle 2\rho,\mu \rangle]), \quad \cI_*^\mu = {}^{\mathrm{p}}\hspace{-1pt} \tau^{\leq 0}(j^\mu_* \underline{\bk}_{\Gr^\mu}[\langle 2\rho,\mu \rangle]), 
\]
where $j^\mu: \Gr^\mu \hookrightarrow \Gr$ is the inclusion and ${}^{\mathrm{p}} \hspace{-1pt} \tau^{\geq 0}, {}^{\mathrm{p}} \hspace{-1pt} \tau^{\leq 0}$ are the perverse truncation functors. With this notation there exists (up to scalar) a unique nonzero morphism $\cI_!^\mu \to \cI_*^\mu$, and its image is $\IC^\mu$. Once again the category $\Perv_{\cL^+ G}(\Gr,\bk)$ has a highest weight structure with standard objects the perverse sheaves $(\cI_!^\mu : \mu \in \bY_+)$ and costandard objects the perverse sheaves $(\cI_*^\mu : \mu \in \bY_+)$, see~\cite[Proposition~1.12.4]{bar}. (Contrary to the case of $I_\unip$-equivariant perverse sheaves, 
the proof of this claim relies on some subtle results on the geometry of $\cL^+G$-orbits on $\Gr$ due to Mirkovi{\'c}--Vilonen.)

As in the $I$-equivariant setting (see~\S\ref{ss:cat-Iequ-sheaves}), we have a convolution product
\begin{equation}
\label{eqn:convol-L+G}
\star^{\cL^+G} : \Db_{\cL^+ G}(\Gr,\bk) \times \Db_{\cL^+ G}(\Gr,\bk) \to \Db_{\cL^+ G}(\Gr,\bk)
\end{equation}
which equips $\Db_{\cL^+ G}(\Gr,\bk)$ with the structure of a monoidal category. In this case it is known that this product is t-exact (i.e., a product of perverse sheaves is perverse), and hence induces a monoidal structure on the abelian category $\Perv_{\cL^+G}(\Gr,\bk)$; see~\cite[\S 1.6.3]{bar} for details. The \emph{geometric Satake equivalence} describes the monoidal category $(\Perv_{\cL^+G}(\Gr,\bk),\star^{\cL^+G})$ in representation-theoretic terms: more explicitly, in~\cite{mv} the authors construct a canonical affine $\bk$-group scheme $G^\vee_\bk$ equipped with a split maximal torus $T^\vee_\bk$ whose group of characters is $\bY$ and a canonical equivalence of monoidal categories
\[
\Satake: (\Perv_{\cL^+G}(\Gr,\bk), \star^{\cL^+G}) \simto ( \Rep(G^\vee_\bk), \otimes).
\]
They also show that $G^\vee_\bk$ is a split connected reductive group over $\bk$, and that the root datum of $(G^\vee_\bk, T^\vee_\bk)$ is dual to that of $(G,T)$. Under this equivalence $\cI_!^\mu$, resp.~$\cI_*^\mu$, corresponds to the Weyl, resp.~induced, module of highest weight $\mu$.

%-------------------------------------------------------------------
\subsection{Root subgroups and unipotent subgroups}
\label{ss:root-subgps}
%-------------------------------------------------------------------

Recall (see e.g.~\cite[\S I.1.3]{jantzen}) that for each root $\alpha \in \fR_+$ there is a homomorphism
\[
\varphi_\alpha: \mathrm{SL}_2 \to G
\]
such that for $t \in T$, $x \in \F$ and $y \in \F^\times$ we have
\[
t \varphi_\alpha( \begin{smallmatrix} 1 & x \\ 0 & 1 \end{smallmatrix}) t^{-1} = \varphi_\alpha( \begin{smallmatrix} 1 & \alpha(t)x \\ 0 & 1 \end{smallmatrix}), \quad t \varphi_\alpha( \begin{smallmatrix} 1 & 0 \\ x & 1 \end{smallmatrix}) t^{-1} = \varphi_\alpha( \begin{smallmatrix} 1 & 0 \\ \alpha(t)^{-1} x & 1 \end{smallmatrix}), \quad \varphi_\alpha(\begin{smallmatrix} y & 0 \\ 0 & y^{-1} \end{smallmatrix})=\alpha^\vee(y).
\]
The image of the map $\Ga \to G$ given by $x \mapsto \varphi_\alpha( \begin{smallmatrix} 1 & x \\ 0 & 1 \end{smallmatrix})$ is often denoted $U_\alpha$, and called the \emph{root subgroup} of $G$ associated with $\alpha$.

We will now explain how to define certain (positive, simple) root subgroups of $\cL G$, attached to elements $s \in S_\aff$.  (For a discussion of more general root subgroups of $\cL G$, see~\cite[\S 3]{faltings}.)
First, if $s \in S$, let $\alpha_s \in \fR_{\mathrm{s}}$ be the corresponding simple root, and let
\[
U^+_s := \text{image of $U_{\alpha_s}$ under the natural map $G \to \cL G$.}
\]
On the other hand, if $s \in S_\aff \smallsetminus S$, then recall that $s = t_{\beta^\vee} s_{\beta^\vee}$ for a maximal short coroot $\beta^\vee \in \fR^\vee$, corresponding to a maximal (long) root $\beta \in \fR_+$.  In this case, define
\[
U^+_s := \text{image of the map $\Ga \to \cL G$ given by $x \mapsto \varphi_\beta(\begin{smallmatrix} 1 & 0 \\ z^{-1}x & 1\end{smallmatrix})$.}
\]
The construction above gives us an isomorphism $\Ga \cong U^+_s$ for each $s \in S_\aff$. (This isomorphism is not canonical, but is fixed once and for all.)

A direct calculation (cf.~\cite[\S 3]{faltings}) shows that for any $s \in S_\aff$, the group $\dot{s} I_\unip \dot{s}^{-1} \cap I_\unip$ is normal in $\dot{s} I_\unip \dot{s}^{-1}$, and that multiplication induces an isomorphism
\[
 U^+_s \ltimes (\dot{s} I_\unip \dot{s}^{-1} \cap I_\unip) \simto \dot{s} I_\unip \dot{s}^{-1}.
\]
This identification gives rise to a quotient map
\[
\psi_s: \dot{s} I_\unip \dot{s}^{-1} \to U^+_s \cong \Ga.
\]

More generally, consider a finitary subset $A \subset S_\aff$. 
Then there is a parahoric group scheme $P_A \subset \cL G$ such that
\[
P_A(\F) = \bigcup_{w \in W_A} I(\F) \dot w I(\F).
\]
If we set $I_\unip^A := \dot{w}_A I_\unip \dot{w}_A^{-1}$,
the intersection $I_\unip^{A} \cap I_\unip$ is the pro-unipotent radical of $P_A$, and the quotient $P_A/I_\unip^{A} \cap I_\unip$ is a reductive algebraic group $M_A$ over $\F$ whose Weyl group is $W_A$. If we set $U^+_A:=I^{A}_\unip/I^{A}_\unip \cap I_\unip \subset M_A$, then $U^+_A$ is the unipotent radical of a (positive) Borel subgroup of $M_A$.  There is a canonical isomorphism
\[
U^+_A/[U^+_A,U^+_A] \cong \prod_{s \in A} U^+_s,
\]
which we use to define the map $\psi_A: I^{A}_\unip \to \Ga$ 
as the composition
\[
I^{A}_\unip \to I^{A}_\unip/I^{A}_\unip \cap I_\unip = U^+_A \to U^+_A/[U^+_A,U^+_A] \cong \prod_{s \in A} U^+_s = \prod_{s \in A} \Ga \xrightarrow{+} \Ga.
\]

An important special case of this construction is when $A=S$ (so that $W_S=W$).
In this case we have $P_S=\cL^+G$, $I_\unip^{S}$ is the inverse image of the unipotent radical $U^+$ of the Borel subgroup of $G$ opposite to $B$ (with respect to $T$) under the evaluation morphism $\cL^+G \to G$, and the intersection $I^{S}_\unip \cap I_\unip$ is the kernel of this morphism.

%-------------------------------------------------------------------
\subsection{Whittaker categories}
\label{ss:Whit}
%-------------------------------------------------------------------

We assume from now on that $\F$ has characteristic $p>0$, and that $\bk$ contains a nontrivial $p$-th root of unity. This allows us to choose a nontrivial homomorphism $\Z/p\Z \to \bk^\times$, which in turn determines an Artin--Schreier local system on $\Ga$, denoted by $\AS$.

Let $A \subset S_\aff$ be a finitary subset.
We set
$\cX_A := \psi_A^*\AS$.
Using the techniques spelled out e.g.~in~\cite[Appendix~A]{modrap1} one can define the $(I_\unip^A,\cX_A)$-equivariant derived categories
\[
\Db_{(I_\unip^A,\cX_A)}(\Fl,\bk) \quad \text{and} \quad \Db_{(I_\unip^A,\cX_A)}(\Gr,\bk)
\]
of $\bk$-sheaves on $\Fl$ and $\Gr$ respectively.
These categories admit natural perverse t-structures, whose hearts will be denoted $\Perv_{(I_\unip^A,\cX_A)}(\Fl,\bk)$ and $\Perv_{(I_\unip^A,\cX_A)}(\Gr,\bk)$ respectively.

For $w \in W_\ext$ we will denote by $\Fl_w^A$ the $I_\unip^A$-orbit of the image of $\dot{w}$ in $\Fl$; then
\[
\Fl_{\mathrm{red}} = \bigsqcup_{w \in W_\ext} \Fl^A_w.
\]
By definition we have $\Fl^A_w = \dot{w}_A \cdot \Fl_{w_A w}$; it follows that for $y,w \in W_\ext$ we have
\[
\overline{\Fl^A_w} \subset \overline{\Fl^A_y} \quad \Leftrightarrow \quad w_A w \leq w_A y.
\]
Below we will mainly consider these orbits in the case where $w$ and $y$ are minimal in $W_A w$ and $W_A y$ respectively; 
in this case, in view of the discussion in~\S\ref{ss:coset} we have the simpler characterization
\[
\overline{\Fl^A_w} \subset \overline{\Fl^A_y} \quad \Leftrightarrow \quad w \leq y.
\]

It is a standard fact that the orbit $\Fl^A_w$ supports a nonzero $(I_\unip^A,\cX_A)$-equiva\-riant local system if and only if $w$ has minimal length in the coset $W_A w$; in this case there exists a unique such local system of rank $1$, and the corresponding standard, resp.~costandard, perverse sheaf (obtained by taking the $!$-pushforward, resp.~$*$-pushforward, of the shift by $\dim(\Fl^A_w)=\ell(w_A w)$ of this local system under the embedding $\Fl_w^A \to \Fl$) will be denoted $\DFl_w^A$, resp.~$\NFl_w^A$. Once again there exists a unique (up to scalar) nonzero morphism $\DFl_w^A \to \NFl_w^A$, whose image will be denoted $\LFl_w^A$, and the objects
\[
(\LFl_w^A : \text{$w \in W_\ext$ minimal in $W_A w$})
\]
are representatives for the isomorphism classes of simple objects in the abelian category $\Perv_{(I_\unip^A,\cX_A)}(\Fl,\bk)$. 

In this setting also the abelian category $\Perv_{(I_\unip^A,\cX_A)}(\Fl,\bk)$ has a natural structure of a highest weight category, with weight poset $\{w \in W_\ext \mid \text{$w$ minimal in $W_A w$}\}$ (equipped with the restriction of the Bruhat order).
A basic example of an element minimal in its coset in $W_A \backslash W_\ext$ is the identity element $e$. Since this element is minimal for the Bruhat order, the canonical morphism $\DFl^A_e \to \NFl^A_e$ is an isomorphism, and we have 
\begin{equation}
\label{eqn:DLN-Whit-e}
\DFl^A_e=\LFl^A_e=\NFl^A_e.
\end{equation}

These considerations have analogues for sheaves on $\Gr$, as follows.
For $w \in W^S_\ext$ we will denote by $\Gr_w^A$ the $I_\unip^A$-orbit of the image of $\dot{w}$ in $\Gr$. Then
\[
\Gr_{\mathrm{red}} = \bigsqcup_{w \in W^S_\ext} \Gr^A_w,
\]
and for $w,y \in W^S_\ext$ we have 
$\overline{\Gr^A_w} \subset \overline{\Gr^A_y}$ if and only if the maximal element in $w_A w W$ is smaller than the maximal element in $w_A y W$ (for the Bruhat order).  In the special case where $w,y \in {}^A W^S_\ext$, this condition is also equivalent to $w \leq y$.

It is a standard fact that the orbit $\Gr^A_w$ supports a nonzero $(I_\unip^A,\cX_A)$-equiva\-riant local system if and only if $w \in {}^A W^S_\ext$ (see~\cite[Appendix~A]{acr} for similar considerations).
In this case there exists a unique such local system of rank $1$, and the corresponding standard, resp.~costandard, perverse sheaf (obtained by taking the $!$-pushforward, resp.~$*$-pushforward, of the shift by $\dim(\Gr^A_w)=\ell(w_A w)$ of this local system under the embedding $\Gr_w^A \to \Gr$) will be denoted $\DGr_w^A$, resp.~$\NGr_w^A$. Once again there exists a unique (up to scalar) nonzero morphism $\DGr_w^A \to \NGr_w^A$, whose image will be denoted $\LGr_w^A$, and the objects
\[
(\LGr_w^A : w \in {}^A W^S_\ext)
\]
are representatives for the isomorphism classes of simple objects in the abelian category $\Perv_{(I_\unip^A,\cX_A)}(\Gr,\bk)$. The standard and costandard objects defined above also endow $\Perv_{(I_\unip^A,\cX_A)}(\Gr,\bk)$ with a natural structure of a highest weight category with weight poset ${}^A W_\ext^S$ (with respect to the restriction of the Bruhat order). 

In this setting again we have a t-exact functor
\[
\pi^\dag : \Db_{(I_\unip^A,\cX_A)}(\Gr,\bk) \to \Db_{(I_\unip^A,\cX_A)}(\Fl,\bk),
\]
which restricts to a fully faithful functor on perverse sheaves and satisfies
\begin{equation}
\label{eqn:pidag-IC-A}
\pi^\dag(\LGr^A_w) \cong \LFl^A_{w w_\circ}
\end{equation}
if $w \in {}^AW_\ext^S$. We also have natural functors
\[
\pi_*, \pi_! : \Db_{(I_\unip^A,\cX_A)}(\Fl,\bk) \to \Db_{(I_\unip^A,\cX_A)}(\Gr,\bk).
\]

%-------------------------------------------------------------------
\subsection{Averaging functors}
\label{ss:Av}
%-------------------------------------------------------------------

Of course $I_\unip^A$ contains $I_\unip^A \cap I_\unip$, and by construction the restriction of $\cX_A$ to this subgroup is trivial. We therefore have a canonical forgetful functor
\[
\For_A: \Db_{(I_\unip^A,\cX_A)}(\Fl,\bk) \to \Db_{I_\unip^A \cap I_\unip}(\Fl,\bk),
\]
where the right-hand side is the $(I_\unip^A \cap I_\unip)$-equivariant derived category of $\bk$-sheaves on $\Fl$. This functor
is fully faithful, and the techniques of~\cite[Appendix A]{modrap1} show that it admits left and right adjoints, denoted by
\[
\av^A_{\psi,!}, \av^A_{\psi,*}: \Db_{I_\unip^A \cap I_\unip}(\Fl,\bk) \to \Db_{(I_\unip^A,\cX_A)}(\Fl,\bk)
\]
respectively; we have
\[
\av^A_{\psi,!}(\cF) = (\mathrm{act}_A)_!(\cX_A \, \widetilde{\boxtimes} \, \cF)[2\dim(U^+_A)], \quad \av^A_{\psi,*}(\cF) = (\mathrm{act}_A)_*(\cX_A \, \widetilde{\boxtimes} \, \cF)
\]
where 
$\mathrm{act}_A : I_\unip^A \times^{I_\unip^A \cap I_\unip} \Fl \to \Fl$ is the action morphism and $\cX_A \, \widetilde{\boxtimes} \, \cF$ is the unique complex whose pullback to $I_\unip^A \times \Fl$ is $\cX_A \boxtimes \cF$. Similarly we have a forgetful functor
\[
\For'_A: \Db_{I_\unip}(\Fl,\bk) \to \Db_{I_\unip^A \cap I_\unip}(\Fl,\bk),
\]
which admits left and right adjoints denoted by
\[
\av^A_{!}, \av^A_{*}: \Db_{I_\unip^A \cap I_\unip}(\Fl,\bk) \to \Db_{I_\unip}(\Fl,\bk),
\]
and defined by formulas similar to those above (involving the constant local system instead of $\cX_A$).

We will set
\begin{gather*}
\Av^A_{\psi,!} := \av^A_{\psi,!} \circ \For'_A[-\dim(U^+_A)], \quad \Av^A_{\psi,*} := \av^A_{\psi,*} \circ \For'_A[\dim(U^+_A)],\\
\Av^A_{!} := \av^A_{!} \circ \For_A[-\dim(U^+_A)], \quad \Av^A_{*} := \av^A_{*} \circ \For_A[\dim(U^+_A)];
\end{gather*}
then we have adjoint pairs $(\Av^A_{\psi,!}, \Av^A_*)$ and $(\Av^A_{!}, \Av^A_{\psi,*})$.

Similar considerations apply to sheaves on $\Gr$; we will use the same notation for the corresponding functors relating the categories $\Db_{I_\unip}(\Gr,\bk)$ and $\Db_{(I_\unip^A,\cX_A)}(\Gr,\bk)$. The base change theorem guarantees that we have canonical isomorphisms
\begin{equation}
\label{eqn:Av-pi}
\Av^A_{\psi,?} \circ \pi^\dag \cong \pi^\dag \circ \Av^A_{\psi,?}, \quad \Av^A_{?} \circ \pi^\dag \cong \pi^\dag \circ \Av^A_{?} \quad \text{for ${?}={!}$ or $*$.}
\end{equation}

%---------------------------------------------------------------------
\subsection{Study of Whittaker averaging functors}
\label{ss:Av-Whit}
%---------------------------------------------------------------------

The following claim is standard (see e.g.~\cite{bbm,by,abbgm}).

\begin{lem}
\label{lem:isom-Av}
For sheaves on $\Fl$ and $\Gr$,
there exists a canonical isomorphism of functors
\[
\Av^A_{\psi,!} \simto \Av^A_{\psi,*},
\]
and these functors are t-exact.
\end{lem}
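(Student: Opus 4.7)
The plan is to prove the isomorphism and t-exactness simultaneously via the standard pattern for Whittaker averaging functors. We focus on the case of $\Fl$; the case of $\Gr$ follows either by the same argument, or from the $\Fl$ case using the isomorphisms in~\eqref{eqn:Av-pi} together with the fact that $\pi^\dag$ is fully faithful on perverse sheaves. The natural transformation $\Av^A_{\psi,!} \to \Av^A_{\psi,*}$ is induced by the canonical morphism $(\mathrm{act}_A)_! \to (\mathrm{act}_A)_*$; the shifts by $[\pm\dim(U^+_A)]$ in the definitions of the two functors are designed so that this transformation is well-defined.

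To show that it is an isomorphism, we reduce to a statement about the finite-dimensional unipotent group $U^+_A$. Since $\cX_A$ is pulled back from a local system on $U^+_A = I^A_\unip/(I^A_\unip \cap I_\unip)$, the averaging descends to an averaging over $U^+_A$. Thus, \'etale-locally on $\Fl$, the morphism $\mathrm{act}_A$ is identified with the action map $U^+_A \times Z \to Z$ for an appropriate locally closed subvariety $Z \subset \Fl$. Writing $U^+_A \cong \Ga^{|A|}$ and $\cX_A \cong \boxtimes_{s \in A}\AS_s$ as a product of nontrivial Artin--Schreier local systems, and using the shearing isomorphism $\sigma\colon (g, x) \mapsto (g, gx)$ of $U^+_A \times Z$, the comparison of $!$- and $*$-pushforwards along the action reduces to the analogous comparison for the projection $\mathrm{pr}_2\colon U^+_A \times Z \to Z$ applied to the sheaf $\sigma_*(\cX_A \boxtimes \cF)$.

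The key input is the clean extension of the Artin--Schreier sheaf: for the open embedding $j\colon \Ga \hookrightarrow \mathbb{P}^1$, the canonical morphism $j_!\AS \to j_*\AS$ is an isomorphism, because $\AS$ is wildly ramified at $\infty$. Applying this factor-by-factor via K\"unneth to the open embedding $U^+_A \hookrightarrow (\mathbb{P}^1)^{|A|}$, and noting that the resulting projection $\overline{\mathrm{pr}}_2 \colon (\mathbb{P}^1)^{|A|} \times Z \to Z$ is proper (so its $!$- and $*$-pushforwards coincide), we conclude that $(\mathrm{pr}_2)_!$ and $(\mathrm{pr}_2)_*$ agree on the sheaves in question, giving the isomorphism $\Av^A_{\psi,!} \simto \Av^A_{\psi,*}$.

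For t-exactness, observe that after the same compactification the functor has the form of the pushforward along the smooth proper morphism $\overline{\mathrm{pr}}_2$ of relative dimension $|A|$ applied to a perverse sheaf, namely the clean extension of $\sigma_*(\cX_A \boxtimes \cF)[|A|]$, which is a twisted product of intermediate extensions of Artin--Schreier local systems with the perverse input $\cF$. That this pushforward is again perverse is then a direct consequence of the specific structure of the input sheaf (vanishing cohomology of $\AS$ at $\infty$) together with the smoothness and properness of $\overline{\mathrm{pr}}_2$. The main technical obstacle is keeping track of all the shifts and ensuring that the ind-scheme nature of $\Fl$ and the pro-finite-dimensional structure of $I^A_\unip$ are handled correctly; these are standard technicalities, handled using the formalism outlined in~\cite{modrap1} and~\cite{bbm,by,abbgm}.
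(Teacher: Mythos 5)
Your argument for the isomorphism takes a genuinely different route from the paper's. The paper constructs the comparison map, reduces to objects in the essential image of $\For^I_{I_\unip}$ (which generate the triangulated category), and identifies the two composites with left convolution by $\DFl^A_e$ and by $\NFl^A_e$ respectively, which coincide by~\eqref{eqn:DLN-Whit-e}. Your approach is the classical clean-extension argument (cf.~\cite{bbm,by}), compactifying $U^+_A\hookrightarrow(\mathbb{P}^1)^{|A|}$ and using wild ramification of $\AS$. Both are valid, and yours is more self-contained geometrically. Two caveats: the shearing should be formulated as an isomorphism $I^A_\unip\times^{I^A_\unip\cap I_\unip}\Fl\simto U^+_A\times\Fl$ over $\Fl$ intertwining $\mathrm{act}_A$ with the second projection (not as an endomorphism of $U^+_A\times Z$), and after shearing the sheaf $\sigma_*(\cX_A\,\widetilde{\boxtimes}\,\cF)$ is no longer a box product, so the K\"unneth invocation is imprecise; cleanness at each boundary divisor still holds, but the reason is that the local monodromy there is a tensor of the wild monodromy of $\AS$ with a tame contribution from $\cF$, hence still wild.

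The t-exactness argument, however, has a real gap. Pushing a perverse sheaf forward along a smooth proper morphism does not in general produce a perverse sheaf (already $(\mathbb{P}^1\to\mathrm{pt})_*$ applied to $\underline{\bk}_{\mathbb{P}^1}[1]$ fails), and ``a direct consequence of the specific structure of the input sheaf'' is not an argument. The way to close this gap, and the route the paper takes, is Artin vanishing for affine morphisms: $\Av^A_{\psi,!}$ is built from a $!$-pushforward along the affine morphism $\mathrm{act}_A$ and is therefore left t-exact, $\Av^A_{\psi,*}$ is built from a $*$-pushforward along the same affine morphism and is therefore right t-exact (\cite[Th\'eor\`eme~4.1.1, Corollaire~4.1.2]{bbd}), and the isomorphism between them then forces t-exactness. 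Your compactification does encode this --- since $\overline{\mathrm{pr}}_2\circ j=\mathrm{pr}_2$ is affine and the clean extension realizes the proper pushforward simultaneously as the $!$- and $*$-affine pushforwards of $\sigma_*(\cX_A\,\widetilde{\boxtimes}\,\cF)$ --- but that is the argument you need to write down, rather than appealing to the cohomology of $\AS$ at infinity.
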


\begin{proof}[Proof sketch]
To fix notation we consider sheaves on $\Fl$; the case of $\Gr$ is similar. 
The natural morphism of functors $(\mathrm{act}_A)_! \to (\mathrm{act}_A)_*$ induces a morphism of functors
\[
\av^A_{\psi,!}[-\dim(U_A^+)] \to \av^A_{\psi,*}[\dim(U_A^+)],
\]
from which we obtain a morphism $\Av^A_{\psi,!} \to \Av^A_{\psi,*}$. Since the category $\Db_{I_\unip}(\Fl,\bk)$ is generated (as a triangulated category) by the essential image of the forgetful functor $\For^I_{I_\unip} $ (see~\S\ref{ss:cat-Iequ-sheaves}), to show that this morphism is an isomorphism it suffices to do so for its composition with this functor. Now from the definitions we see that the compositions $\Av^A_{\psi,!} \circ \For^I_{I_\unip}$ and $\Av^A_{\psi,*} \circ \For^I_{I_\unip}$ can be described as convolution on the left with the objects $\DFl_e^A$ and $\NFl_e^A$ respectively. Since these objects are canonically isomorphic (see~\eqref{eqn:DLN-Whit-e}), we deduce the desired isomorphism.

Since the functor $\Av^A_{\psi,!}$,
resp.~$\Av^A_{\psi,*}$, 
is defined in terms of a $!$-pushforward, resp.~$*$-pushforward, along an affine morphism, it is left t-exact, resp.~right t-exact, by~\cite[Th\'eor\`eme 4.1.1, Corollaire~4.1.2]{bbd}. Since these functors are isomorphic, they are therefore t-exact.
\end{proof}

In view of Lemma~\ref{lem:isom-Av}, the functors
$\Av^A_{\psi,!}$ and $\Av^A_{\psi,*}$ will be identified below, and denoted simply by $\Av^A_{\psi}$. This lemma implies in particular that we have canonical isomorphisms
\begin{equation}
\label{eqn:Av-pi-2}
\Av^A_{\psi} \circ \pi_* \cong \pi_* \circ \Av^A_{\psi}, \quad \Av^A_{\psi} \circ \pi_! \cong \pi_! \circ \Av^A_{\psi}
\end{equation}
The behavior of $\Av^A_{\psi}$ on our ``special'' perverse sheaves is described as follows.

\begin{lem}
\phantomsection
\label{lem:AvWhit}
\begin{enumerate}
\item
\label{it:AvWhit-1}
If $w \in W_\ext$ is minimal in $W_A w$, then for $y \in W_A$ we have 
\[
\Av^A_{\psi}(\DFl_{yw}) \cong \DFl^A_w, \quad \Av^A_{\psi}(\NFl_{yw}) \cong \NFl^A_w.
\]
\item
\label{it:AvWhit-2}
If $w \in W_{\ext}$ is minimal in $W_A w$, then for $y \in W_A$ the object
$\Av^A_\psi(\LFl_{yw})$ is isomorphic to
$\LFl^A_w$ if $y=e$, and vanishes otherwise.
\item
\label{it:AvWhit-3}
Let $w \in W_\ext^S$, and write $w=yx$ with $y \in W_A$ and $x$ minimal in $W_A x$. Then $x \in W_\ext^S$, and we have
\[
\Av^A_{\psi}(\DGr_{w}) \cong \begin{cases} \DGr^A_x & \text{if $x \in {}^A W_\ext^S$;} \\
0 & \text{otherwise} \end{cases},
\quad
\Av^A_{\psi}(\NGr_{w}) \cong \begin{cases} \NGr^A_x & \text{if $x \in {}^A W_\ext^S$;} \\
0 & \text{otherwise.} \end{cases}
\]
\item
\label{it:AvWhit-4}
Let $w \in W_\ext^S$. The object
$\Av^A_\psi(\LGr_{w})$ is isomorphic to
$\LGr^A_w$ if $w \in {}^A W_\ext^S$, and vanishes otherwise.
\end{enumerate}
\end{lem}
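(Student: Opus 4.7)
The plan is to exploit the identification (from the proof of Lemma~\ref{lem:isom-Av}) of $\Av^A_\psi \circ \For^I_{I_\unip}$ with left convolution by the Whittaker delta $\LFl^A_e = \DFl^A_e = \NFl^A_e$. This reduces Parts~(1)--(2) to convolution computations on $\Fl$. Parts~(3)--(4) are then deduced by transferring the results to $\Gr$ using $\pi_!,\pi_*$ (via~\eqref{eqn:Av-pi-2}) and $\pi^\dag$ (via~\eqref{eqn:Av-pi}); the key geometric fact is that $\pi|_{\Fl_w}:\Fl_w \simto \Gr_w$ is an isomorphism for each $w \in W_\ext^S$ (length additivity $\ell(wv)=\ell(w)+\ell(v)$ for $v \in W$ makes the fibers of $\pi$ meet $\Fl_w$ in a single point), so $\pi_!\DFl_w \cong \DGr_w$ and $\pi_*\NFl_w \cong \NGr_w$.

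For Part~(1), length additivity $\ell(yw)=\ell(y)+\ell(w)$ yields $\DFl_{yw} \cong \DFl_y \star^I \DFl_w$, hence $\Av^A_\psi(\DFl_{yw}) \cong \LFl^A_e \star^I \DFl_y \star^I \DFl_w$. The identity $\LFl^A_e \star^I \DFl_y \cong \LFl^A_e$ for $y \in W_A$ is a classical Whittaker identity on the reductive flag variety $P_A/I$ (proved by induction on $\ell(y)$ reducing to $s \in A$ simple, where it is a direct $\mathrm{SL}_2$-type calculation). The identity $\LFl^A_e \star^I \DFl_w \cong \DFl^A_w$ follows from the observation that the action map $I^A_\unip \times^{I^A_\unip \cap I_\unip} \Fl_w \to \Fl^A_w$ is an isomorphism. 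The costandard case is entirely analogous. For Part~(2), t-exactness of $\Av^A_\psi$ combined with Part~(1) shows that $\Av^A_\psi(\LFl_{yw})$ is a subquotient of $\DFl^A_w$ embedded in $\NFl^A_w$, so it is either $\LFl^A_w$ or $0$. When $y=e$, the isomorphisms of Part~(1) are natural with respect to the adjunction morphism $\DFl_w \to \NFl_w$ (whose image is $\LFl_w$), identifying $\Av^A_\psi(\LFl_w)$ with $\LFl^A_w$. When $y\neq e$, picking $s \in A$ with $sy<y$, a standard Whittaker vanishing argument along the root subgroup $U^+_s \subset I^A_\unip$ (on which $\psi_A$ restricts to the nontrivial Artin--Schreier character, and $\int_{\Ga}\AS = 0$) yields $\Av^A_\psi(\LFl_{yw}) = 0$.

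For Parts~(3)--(4): Part~(3) follows from Part~(1) via~\eqref{eqn:Av-pi-2}, giving $\Av^A_\psi(\DGr_w) \cong \pi_! \DFl^A_x$. When $x \in {}^A W_\ext^S$, Lemma~\ref{lem:double-min} gives $w_Ax \in W_\ext^S$ and hence $\Fl^A_x \simto \Gr^A_x$, yielding $\DGr^A_x$; when $x \notin {}^A W_\ext^S$, the map $\Fl^A_x \to \Gr^A_x$ has positive-dimensional affine fibers along which $\psi_A$ restricts non-trivially, forcing $\pi_!\DFl^A_x = 0$. The costandard case is dual, using $\pi_*$. Part~(4) follows from Part~(2) via $\pi^\dag$: by~\eqref{eqn:Av-pi} and~\eqref{eqn:pidag-IC}, we have $\pi^\dag \Av^A_\psi(\LGr_w) \cong \Av^A_\psi(\LFl_{ww_\circ})$; by Lemma~\ref{lem:double-min}, $ww_\circ$ is minimal in $W_A ww_\circ$ iff $w \in {}^A W_\ext^S$, so Part~(2) together with~\eqref{eqn:pidag-IC-A} and the full faithfulness of $\pi^\dag$ on perverse sheaves finishes the argument.

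The main obstacles will be the Whittaker-specific inputs: the identity $\LFl^A_e \star^I \DFl_y \cong \LFl^A_e$ in Part~(1), and the vanishing statements in Part~(2) (for $y \neq e$) and Part~(3) (for $x \notin {}^A W_\ext^S$). All of these rest on the fundamental cancellation $\int_{\Ga}\AS = 0$ along the Whittaker-nontrivial root subgroups $U^+_s$ for $s \in A$, combined with a careful analysis of how these root subgroups sit inside the orbit stratification.
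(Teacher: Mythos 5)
Your overall strategy matches the paper's proof. For parts (3) and (4) you follow exactly the same route: reduce to the case $y=e$, pass to $\Fl$ via $\pi_!$ (resp.\ $\pi^\dag$), apply part (1) (resp.\ part (2)), and invoke the computation of $\pi_!\DFl^A_x$ (which the paper outsources to~\cite[Lemma~A.1]{acr}, and which you sketch as a fiber-vanishing argument). For parts (1) and (2) the paper simply cites~\cite[Lemmas~4.4.6 \&~4.4.8]{by}; your sketch via the identification $\Av^A_\psi\circ\For^I_{I_\unip}\cong\DFl^A_e\star^I(-)$ together with the factorization $\DFl_{yw}\cong\DFl_y\star^I\DFl_w$ is in the same spirit as that reference, so you are not doing something different from what the authors intend.

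One point deserves more care: your argument for the vanishing $\Av^A_\psi(\LFl_{yw})=0$ when $y\neq e$. You invoke a ``standard Whittaker vanishing along the root subgroup $U^+_s$,'' but $\LFl_{yw}$ is $I$-equivariant, not $I^A_\unip$-equivariant, and $U^+_s\not\subset I$, so one cannot literally cancel the Artin--Schreier sheaf against a trivial $U^+_s$-equivariance of $\LFl_{yw}$. The robust argument (which is what is actually done in the style of~\cite{by}) goes instead through t-exactness and the already-established part (1): $\Av^A_\psi(\LFl_{yw})$ is both a quotient of $\Av^A_\psi(\DFl_{yw})\cong\DFl^A_w$ and a subobject of $\Av^A_\psi(\NFl_{yw})\cong\NFl^A_w$, hence is either $0$ or $\LFl^A_w$; one then rules out $\LFl^A_w$ for $y\neq e$ by a multiplicity count (using $[\DFl^A_w:\LFl^A_w]=1$ and the fact that $\LFl_w$ already accounts for this single occurrence), or equivalently by showing directly that the induced map $\DFl^A_w\to\NFl^A_w$ is zero. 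Similarly, your ``positive-dimensional fibers along which $\psi_A$ restricts non-trivially'' justification for $\pi_!\DFl^A_x=0$ when $x\notin{}^AW^S_\ext$ is the right idea but is exactly what~\cite[Lemma~A.1]{acr} makes precise; as written it would need the identification of which root subgroups of $I^A_\unip$ act along those fibers and a check that at least one of them lies in a simple factor of $U^+_A/[U^+_A,U^+_A]$.
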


\begin{proof}
For~\eqref{it:AvWhit-1}--\eqref{it:AvWhit-2}, the proof can be adapted from those of~\cite[Lemmas~4.4.6 \&~4.4.8]{by}.

\eqref{it:AvWhit-3} The fact that $x \in W_\ext^S$ follows from Lemma~\ref{lem:minimal-mult-Saff}.
For the description of $\Av^A_{\psi}(\DGr_{w})$ and $\Av^A_{\psi}(\NGr_{w})$, the same arguments as for~\eqref{it:AvWhit-1} reduce the proof to the case $y=e$, i.e.~$w$ is minimal in $W_A w W$. In this case we observe that
\[
\Av^A_{\psi}(\DGr_{w}) \cong \Av^A_{\psi}(\pi_! \DFl_{w}) \overset{\eqref{eqn:Av-pi-2}}{\cong} \pi_! \Av^A_{\psi}(\DFl_{w}) \overset{\eqref{it:AvWhit-1}}{\cong} \pi_! \DFl^A_w.
\]
Now $\pi_! \DFl^A_w$ is isomorphic to $\DGr^A_w$ if $w \in {}^A W_\ext^S$, and $0$ otherwise; see~\cite[Lemma~A.1]{acr} for a proof in the similar setting of Kac--Moody flag varieties. This proves the claim for $\Av^A_{\psi}(\DGr_{w})$; the case of $\Av^A_{\psi}(\NGr_{w})$ is similar.

\eqref{it:AvWhit-4}
We have
\[
\pi^\dag(\Av^A_\psi(\LGr_{w})) \overset{\eqref{eqn:Av-pi}}{\cong} \Av^A_\psi(\pi^\dag(\LGr_{w})) \overset{\eqref{eqn:pidag-IC}}{\cong} \Av^A_\psi(\LFl_{ww_\circ}).
\]
By~\eqref{it:AvWhit-2} the rightmost expression vanishes unless $ww_\circ$ is minimal in $W_A w w_\circ$, which by definition is equivalent to $w \in {}^A W_\ext^S$. In case $w$ belongs to ${}^A W_\ext^S$, the rightmost term is isomorphic to $\LFl^A_{ww_\circ}$. We have a simple perverse sheaf $\LGr^A_w$ on $\Gr$, and comparing the formula above with~\eqref{eqn:pidag-IC-A} we see that
$\pi^\dag(\Av^A_\psi(\LGr_{w})) \cong \pi^\dag(\LGr^A_w)$.
The desired claim follows, by full faithfulness of $\pi^\dag$ on perverse sheaves.
\end{proof}

%-------------------------------------------------------------------
\subsection{Study of Iwahori averaging functors}
\label{ss:Av-Iw}
%-------------------------------------------------------------------

We finish this section with some properties of the averaging functors $\Av^A_{!}$ and $\Av^A_{*}$.

\begin{lem}
\phantomsection
\label{lem:Av-Iw}
\begin{enumerate}
\item
\label{it:Av-Iw-1}
The functors $\Av^A_{!}$ and $\Av^A_{*}$ are t-exact.
\item
\label{it:Av-Iw-2}
There exists an isomorphism of functors
\[
\Av^A_! \circ \Av^A_\psi \circ \For^{I}_{I_\unip} \cong \Av^A_!(\DFl^A_e) \star^I (-)
\]
which identifies the morphism $\Av^A_! \circ \Av^A_\psi \circ \For^{I}_{I_\unip} \to \For^{I}_{I_\unip}$ induced by adjunction with the morphism induced by a surjection $\Av^A_!(\DFl^A_e) \to \LFl_e$.
\end{enumerate}
\end{lem}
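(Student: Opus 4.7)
Since $\Av^A_\psi$ is t-exact by Lemma~\ref{lem:isom-Av}, its left adjoint $\Av^A_!$ is automatically right t-exact and its right adjoint $\Av^A_*$ is left t-exact. For the remaining bounds, I would unwind the definitions: modulo the shift by $[\pm\dim U_A^+]$ and the (t-exact) forgetful functor $\For_A$, the functors $\av^A_!$ and $\av^A_*$ are respectively the $!$- and $*$-pushforward of an appropriate twisted external product along an action morphism $\mathrm{act}'_A : I_\unip \times^{I_\unip^A \cap I_\unip} \Fl \to \Fl$ (the trivial-character analogue of the setup underlying Lemma~\ref{lem:isom-Av}). The source fibers over $U_A^- = I_\unip/(I_\unip^A \cap I_\unip)$ and the morphism is affine of relative dimension $\dim U_A^- = \dim U_A^+$; the affine vanishing theorem~\cite[Th\'eor\`eme~4.1.1, Corollaire~4.1.2]{bbd} combined with the cohomological shifts in the definitions then supplies the remaining t-exactness estimates (with the analogous argument working on $\Gr$).

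\textbf{Part~(2), the isomorphism.} The strategy is to evaluate the left-hand side at the unit of convolution. Since $\LFl_e = \DFl_e$ is the unit for $\star^I \colon \Db_{I_\unip}(\Fl,\bk) \times \Db_I(\Gr,\bk) \to \Db_{I_\unip}(\Gr,\bk)$ (and its analogue on $\Fl$), for any object $\cG$ in the relevant $I$-equivariant category one has $\For^I_{I_\unip}(\cG) \cong \LFl_e \star^I \cG$. The technical crux is a commutation lemma: both $\Av^A_\psi$ and $\Av^A_!$ commute with right convolution by $I$-equivariant sheaves, i.e. $\Av^A_?(\cE \star^I \cG) \cong \Av^A_?(\cE) \star^I \cG$. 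This follows by base change on the convolution diagram, using that the $I_\unip^A$-action driving the averaging acts only on the left factor of the convolution and is therefore insensitive to $\cG$. Granting this, Lemma~\ref{lem:AvWhit}\eqref{it:AvWhit-2} gives $\Av^A_\psi(\LFl_e) \cong \LFl^A_e = \DFl^A_e$ (the last equality because $e$ is minimal in its $W_A$-coset), so that
\[
\Av^A_\psi \circ \For^I_{I_\unip}(\cG) \cong \DFl^A_e \star^I \cG \quad\text{and}\quad \Av^A_! \circ \Av^A_\psi \circ \For^I_{I_\unip}(\cG) \cong \Av^A_!(\DFl^A_e) \star^I \cG.
\]
(The version on $\Gr$, with $\DGr^A_e$ and $\LGr_e$ in place of $\DFl^A_e$ and $\LFl_e$, is formally identical, matching the statement of the lemma up to an apparent $\Gr$/$\Fl$ typo.)

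\textbf{Part~(2), the morphism and surjectivity.} The morphism in question is the counit $\Av^A_! \circ \Av^A_\psi \to \mathrm{id}$ precomposed with $\For^I_{I_\unip}$. Naturality of the counit with respect to right convolution identifies it, through the isomorphism just constructed, with the convolution of the counit $\epsilon \colon \Av^A_!(\DFl^A_e) \to \LFl_e$ (evaluated at $\LFl_e$) with $\mathrm{id}_\cG$. Since $\LFl_e$ is simple, surjectivity of $\epsilon$ reduces to $\epsilon \neq 0$; by adjunction,
\[
\Hom\bigl(\Av^A_!(\DFl^A_e),\,\LFl_e\bigr) \cong \Hom\bigl(\DFl^A_e,\,\Av^A_\psi(\LFl_e)\bigr) \cong \Hom(\DFl^A_e,\,\LFl^A_e) \cong \bk,
\]
and $\epsilon$ corresponds under this bijection to the identity of $\LFl^A_e$, which is nonzero. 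The main technical obstacle throughout is establishing the convolution-averaging commutation lemma, which demands a careful base-change argument in the ind-scheme setting with attention to the various equivariance structures.
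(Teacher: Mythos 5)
Your proof is correct and follows essentially the same route as the paper: part~(1) combines t-exactness of $\Av^A_\psi$ with the affine-pushforward bound from~\cite{bbd}, and part~(2) hinges on the identification $\Av^A_\psi \circ \For^I_{I_\unip} \cong \DFl^A_e \star^I (-)$ that the paper extracts from the proof of Lemma~\ref{lem:isom-Av}, which you re-derive cleanly via $\For^I_{I_\unip}(\cG) \cong \LFl_e \star^I \cG$, commutation of averaging with right convolution, and Lemma~\ref{lem:AvWhit}. You also correctly flag the $\Gr$/$\Fl$ notational slip in the statement (it should read $\DFl^A_e$, not $\DGr^A_e$), and your adjunction computation showing the counit is nonzero (hence surjective onto the simple $\LFl_e$) is exactly what the paper's terse ``the desired claims follow'' is standing in for.
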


\begin{proof}
\eqref{it:Av-Iw-1}
The functor $\Av^A_{*}$ is the right adjoint of the exact functor $\Av^A_{\psi}$, so it is left exact. On the other hand this functor is defined in terms of $*$-pushforward along an affine morphism, so it is right exact by~\cite[Th\'eor\`eme 4.1.1]{bbd}. It is therefore exact. Dual arguments apply to $\Av^A_!$.

\eqref{it:Av-Iw-2} 
As explained in the course of the proof of Lemma~\ref{lem:isom-Av} the functor $\Av^A_\psi \circ \For^{I}_{I_\unip}$ identifies with $\DFl^A_e \star^I (-)$. The desired claims follow.
\end{proof}

%%%%%%%%%%%%%%%%%
\section{The geometric Steinberg formula}
\label{sec:geometric-Steinberg}
%%%%%%%%%%%%%%%%%

%------------------------------------------------------------------
\subsection{Statement}
\label{ss:statement-Steinberg}
%------------------------------------------------------------------

If $A \subset S_\aff$ is a finitary subset,
the same constructions as for~\eqref{eqn:convol-L+G} provide a convolution bifunctor
\[
\star^{\cL^+G} : \Db_{(I^A_\unip, \cX_A)}(\Gr,\bk) \times \Db_{\cL^+ G}(\Gr,\bk) \to \Db_{(I^A_\unip, \cX_A)}(\Gr,\bk).
\]
It is known that this bifunctor is again t-exact, in the sense that for $\cF$ in $\Perv_{(I^A_\unip, \cX_A)}(\Gr,\bk)$ and $\cG$ in $\Perv_{\cL^+ G}(\Gr,\bk)$ the convolution $\cF \star^{\cL^+G} \cG$ is perverse; see~\cite[Lemma~2.3]{bgmrr} for details and references.

Recall the subsets ${}^A W^S_\ext$ and ${}^A W^\res_\ext$ of $W_\ext$ introduced in~\S\ref{ss:coset} and~\S\ref{ss:more-coset} respectively. 
The following statement is the first main result of this paper, which gives a geometric counterpart of the Steinberg tensor product theorem for representations of reductive groups over fields of positive characteristic (or of quantum groups at a root of unity).

\begin{thm}
\label{thm:geometric-Steinberg}
Let $y \in {}^A W^\res_\ext$. 
Then for any $\mu \in \bY_+$ we have
\[
\LGr^A_y \star^{\cL^+G} \IC^\mu \cong \LGr^A_{yt_{w_\circ(\mu)}}.
\]
\end{thm}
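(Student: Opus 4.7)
The plan is to first reduce to the non-Whittaker case $A=\varnothing$, then control the convolution on the ``extremes'' of the highest weight structure (standards and costandards), and finally deduce the IC statement by a formal image argument.

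\textbf{Reduction to $A=\varnothing$.} Since $y\in{}^A W^\res_\ext\subseteq{}^A W^S_\ext$, Lemma~\ref{lem:AvWhit}\eqref{it:AvWhit-4} gives $\Av^A_\psi(\LGr_y)\cong\LGr^A_y$; likewise \eqref{eqn:WS-Wres-Whit} yields $yt_{w_\circ(\mu)}\in{}^A W^S_\ext$, so $\Av^A_\psi(\LGr_{yt_{w_\circ(\mu)}})\cong\LGr^A_{yt_{w_\circ(\mu)}}$. Base change applied to the action diagram defining $\Av^A_\psi$ and the convolution diagram defining $\star^{\cL^+G}$ shows that $\Av^A_\psi(\cF\star^{\cL^+G}\cG)\cong\Av^A_\psi(\cF)\star^{\cL^+G}\cG$ for any $\cF\in\Db_{I_\unip}(\Gr,\bk)$ and $\cG\in\Db_{\cL^+G}(\Gr,\bk)$. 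Consequently it suffices to prove
\[
\LGr_y\star^{\cL^+G}\IC^\mu\cong\LGr_{yt_{w_\circ(\mu)}}\qquad\text{for }y\in W^\res_\ext,\ \mu\in\bY_+.
\]

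\textbf{Convolution of standards and costandards.} I would then prove the analogous statement for the highest weight endpoints:
\[
\DGr_y\star^{\cL^+G}\cI_!^\mu\cong\DGr_{yt_{w_\circ(\mu)}},\qquad \NGr_y\star^{\cL^+G}\cI_*^\mu\cong\NGr_{yt_{w_\circ(\mu)}}.
\]
The geometry is governed by the convolution morphism $m\colon\overline{\Gr_y}\wttimes\overline{\Gr^\mu}\to\Gr$. Since $w_\circ(\mu)\in -\bY_+$, Lemma~\ref{lem:length-res-dom} gives $\ell(yt_{w_\circ(\mu)})=\ell(y)+\ell(t_{w_\circ(\mu)})=\ell(y)+\langle 2\rho,\mu\rangle$, matching the dimension of the twisted product $\Gr_y\wttimes\Gr^\mu$. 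Using the precise simple-root description of restricted elements in Lemma~\ref{lem:res-elements}, one verifies that $m$ restricts to an isomorphism $\Gr_y\wttimes\Gr^\mu\simto \Gr_{yt_{w_\circ(\mu)}}$; combining this with proper base change and the definition of $\cI_!^\mu$, $\cI_*^\mu$ as $!$- and $*$-extensions from $\Gr^\mu$ then yields the displayed isomorphisms.

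\textbf{Conclusion by an image argument.} Using the t-exactness of right $\cL^+G$-convolution and the canonical maps $\DGr_y\twoheadrightarrow\LGr_y\hookrightarrow\NGr_y$ together with $\cI_!^\mu\twoheadrightarrow\IC^\mu\hookrightarrow\cI_*^\mu$, I obtain a factorisation in $\Perv_{I_\unip}(\Gr,\bk)$
\[
\DGr_y\star^{\cL^+G}\cI_!^\mu\twoheadrightarrow \LGr_y\star^{\cL^+G}\IC^\mu\hookrightarrow \NGr_y\star^{\cL^+G}\cI_*^\mu.
\]
After identifying the outer terms via the previous step, the composite lies in the one-dimensional space $\Hom(\DGr_{yt_{w_\circ(\mu)}},\NGr_{yt_{w_\circ(\mu)}})$; restricting to the open orbit $\Gr_{yt_{w_\circ(\mu)}}$ shows the composite is nonzero, hence a nonzero scalar multiple of the canonical morphism, whose image is $\LGr_{yt_{w_\circ(\mu)}}$. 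On the other hand, the image of this composite is visibly isomorphic to $\LGr_y\star^{\cL^+G}\IC^\mu$ (by the surjection/injection structure), and the theorem follows.

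\textbf{Main obstacle.} The delicate step is the geometric analysis in the second paragraph: verifying that the convolution morphism $m$ restricts to an isomorphism on the open locus of the twisted product. The length match is an immediate consequence of Lemma~\ref{lem:length-res-dom}, but establishing the bijection of points relies on the refined combinatorics of restricted elements captured by Lemma~\ref{lem:res-elements}, and is precisely where the hypothesis $y\in W^\res_\ext$ is essential. I would expect to adapt the local calculations of \cite{abbgm} here, since those arguments similarly hinge on the same length additivity.
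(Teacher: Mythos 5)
Your reduction to $A=\varnothing$ via $\Av^A_\psi$ is correct and coincides with the paper's own reduction. However, the core of your argument (Step 2 and the image argument that relies on it) contains a genuine gap.

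The claimed isomorphisms $\DGr_y\star^{\cL^+G}\cI_!^\mu\cong\DGr_{yt_{w_\circ(\mu)}}$ and $\NGr_y\star^{\cL^+G}\cI_*^\mu\cong\NGr_{yt_{w_\circ(\mu)}}$ are false. Take $y=e$ (which lies in $W_\ext^\res$, since $\mathfrak{A}_{\mathrm{fund}}\subset\Pi_\varsigma$): then $\Gr_e\wttimes\Gr^\mu\cong\Gr^\mu$, which strictly contains $\Gr_{t_{w_\circ(\mu)}}$ as its open dense $I$-orbit whenever $\Gr^\mu$ is not a single $I$-orbit. Correspondingly, $\DGr_e\star^{\cL^+G}\cI_!^\mu\cong\cI_!^\mu$ (since $\DGr_e$ is the monoidal unit), and $\cI_!^\mu$ is supported on all of $\overline{\Gr^\mu}$ — it is in general not isomorphic to $\DGr_{t_{w_\circ(\mu)}}$, which is a $!$-extension from the single open orbit. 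Already for $G=\mathrm{PGL}_2$ and $\mu$ the simple coroot, $\cI_!^\mu=\underline{\bk}_{\mathbb{P}^1}[1]$ while $\DGr_{t_{w_\circ(\mu)}}$ is the $!$-extension of $\underline{\bk}_{\mathbb{A}^1}[1]$. So the convolution map $m$ is \emph{not} an isomorphism from $\Gr_y\wttimes\Gr^\mu$ onto $\Gr_{yt_{w_\circ(\mu)}}$; what is true (and what the paper's Lemma~\ref{lem:dimension} exploits) is only a \emph{dimension estimate} on its fibers.

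Once Step 2 fails, your image argument in Step 3 collapses: the composite $\DGr_y\star\cI_!^\mu\twoheadrightarrow\LGr_y\star\IC^\mu\hookrightarrow\NGr_y\star\cI_*^\mu$ does not visibly live in $\Hom(\DGr_{yt_{w_\circ(\mu)}},\NGr_{yt_{w_\circ(\mu)}})$, and identifying its image with $\LGr_{yt_{w_\circ(\mu)}}$ requires knowing that $\LGr_y\star^{\cL^+G}\IC^\mu$ is \emph{simple}. In positive characteristic (where the Satake category is not semisimple) this is the genuinely hard point, and it cannot be read off from the geometry of a single fiber of $m$. The paper's route is to first prove the full-faithfulness Theorem~\ref{thm:geometric-Steinberg-ff} (via the dimension bounds of Lemma~\ref{lem:dimension} and a five-lemma devissage through standard/costandard filtrations, crucially using Proposition~\ref{prop:mathieu-thm}), deduce $\End(\LGr_y\star\IC^\mu)\cong\End(\IC^\mu)=\bk$, combine this with Verdier self-duality to get simplicity, and only then identify the unique composition factor $\LGr_{yt_{w_\circ(\mu)}}$ by a support/$\Hom$-vanishing argument (Lemma~\ref{lem:vanishing-Steinberg-2}). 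That full-faithfulness step is the missing idea in your proposal.
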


\begin{rmk}
Note that by~\eqref{eqn:WS-Wres-Whit} the element $yt_{w_\circ(\mu)}$ belongs to ${}^A W^S_\ext$,
so it does indeed label a simple object in $\Perv_{(I_\unip^A,\cX_A)}(\Gr,\bk)$; in fact, this equality shows that any label for a simple object of $\Perv_{(I_\unip^A,\cX_A)}(\Gr,\bk)$ has the form $yt_{w_\circ(\mu)}$ for $y$ and $\mu$ as in Theorem~\ref{thm:geometric-Steinberg}. In other words, this theorem describes all the simple objects in $\Perv_{(I_\unip^A,\cX_A)}(\Gr,\bk)$ in terms of those whose label belongs to ${}^A W^\res_\ext$ and the simple objects in the Satake category $\Perv_{\cL^+ G}(\Gr,\bk)$.
\end{rmk}

In the course of the proof of this theorem we will also establish the following result, which is the second main result of the paper.

\begin{thm}
\label{thm:geometric-Steinberg-ff}
For any $y \in {}^A W_\ext^\res$, the functor
\[
\Phi_{y,A} := \LGr^A_y \star^{\cL^+G} (-) : \Perv_{\cL^+ G}(\Gr,\bk) \to \Perv_{(I^A_\unip,\cX_A)}(\Gr,\bk)
\]
is fully faithful.
\end{thm}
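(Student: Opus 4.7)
The plan is to prove Theorem~\ref{thm:geometric-Steinberg-ff} by showing that the exact functor $\Phi_{y,A}$ respects both highest weight structures, sending standards to standards and costandards to costandards compatibly with the canonical morphisms, and then applying a standard Ext-vanishing argument.

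First, $\Phi_{y,A}$ is exact on perverse sheaves, since the convolution $\star^{\cL^+G}$ is t-exact as recalled at the start of~\S\ref{ss:statement-Steinberg}. The main geometric step is then to establish the isomorphisms
\[
\Phi_{y,A}(\cI_!^\mu) \cong \DGr^A_{yt_{w_\circ(\mu)}}, \qquad \Phi_{y,A}(\cI_*^\mu) \cong \NGr^A_{yt_{w_\circ(\mu)}}
\]
for every $\mu \in \bY_+$, and to verify that these isomorphisms can be chosen so that $\Phi_{y,A}$ sends the (unique up to scalar) canonical morphism $\cI_!^\mu \to \cI_*^\mu$ to the canonical morphism $\DGr^A_{yt_{w_\circ(\mu)}} \to \NGr^A_{yt_{w_\circ(\mu)}}$. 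The dimensions match combinatorially: by Lemma~\ref{lem:length-res-dom} applied to $w_\circ(\mu) \in -\bY_+$, we have $\ell(yt_{w_\circ(\mu)}) = \ell(y) + \langle 2\rho,\mu\rangle$, so $\dim \Gr^A_{yt_{w_\circ(\mu)}} = \dim \Gr^A_y + \dim \Gr^\mu$. The geometric input is an analysis of the convolution morphism $m \colon \overline{\Gr^A_y}\wttimes_{\cL^+G}\overline{\Gr^\mu}\to\Gr$: using Lemma~\ref{lem:res-elements} to control how $y$ acts on simple roots, one checks that the image of $m$ equals $\overline{\Gr^A_{yt_{w_\circ(\mu)}}}$ and that $m$ is an isomorphism over the open orbit. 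This yields the identification for $\cI_!^\mu$; the case of $\cI_*^\mu$ follows either by the analogous argument, or by Verdier duality combined with the standard behaviour of $\cX_A$ under duality.

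With the identification in hand, full faithfulness follows from a standard argument in highest weight theory. In each of the two highest weight categories we have
\[
\Ext^i(\Delta_\lambda, \nabla_\mu) \cong \bk \cdot \delta_{i,0}\delta_{\lambda,\mu}.
\]
Since $\mu \mapsto yt_{w_\circ(\mu)}$ is injective by~\eqref{eqn:WS-Wres-Whit}, and since canonical morphisms are preserved by the step above, $\Phi_{y,A}$ induces an isomorphism on every $\Hom(\cI_!^\mu, \cI_*^\nu)$. To pass to arbitrary $\cF, \cG \in \Perv_{\cL^+G}(\Gr,\bk)$, one may restrict to the finite highest-weight subcategory supported on some $\overline{\Gr^\lambda}$ containing both $\cF$ and $\cG$, replace $\cF$ by a bounded resolution by standards and $\cG$ by a bounded coresolution by costandards, and apply $\Phi_{y,A}$ termwise. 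Exactness of $\Phi_{y,A}$ and the Ext-vanishing then identify both $\Hom(\cF, \cG)$ and $\Hom(\Phi_{y,A}\cF, \Phi_{y,A}\cG)$ with the same kernel/cokernel of a map between Hom spaces of standards and costandards, giving the desired isomorphism.

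The main obstacle is the identification of $\Phi_{y,A}$ on standard objects. The length and dimension bookkeeping is immediate from Section~\ref{sec:combinatorics}, but the statement that $m$ is an isomorphism over the open orbit (rather than merely a finite cover) really rests on the restrictedness of $y$ through Lemma~\ref{lem:res-elements}. Equally delicate is the check that the canonical morphism $\cI_!^\mu \to \cI_*^\mu$ is not killed by $\Phi_{y,A}$; this follows by restricting both sides to the open orbit $\Gr^A_{yt_{w_\circ(\mu)}}$, where the convolution becomes an isomorphism. An alternative, in the spirit of~\cite{abbgm}, would be to factor the computation through the forgetful functor from $\cL^+G$-equivariant to $I$-equivariant sheaves, reducing to a convolution of the form $\LGr^A_y \star^I \NGr_{t_{w_\circ(\mu)}}$ where lengths add by Lemma~\ref{lem:length-res-dom} and the convolution of standards/costandards is classical.
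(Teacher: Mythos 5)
Your main geometric claim, that
\[
\Phi_{y,A}(\cI_!^\mu) \cong \DGr^A_{y t_{w_\circ(\mu)}}
\quad\text{and}\quad
\Phi_{y,A}(\cI_*^\mu) \cong \NGr^A_{y t_{w_\circ(\mu)}},
\]
is false in general, and this is a fatal gap in the proposal. Already the case $A = \varnothing$, $y = e$ gives a counterexample: the identity element $e$ belongs to ${}^{\varnothing}W_\ext^\res = W_\ext^\res$ by Lemma~\ref{lem:res-elements}, and for this choice
\[
\Phi_{e,\varnothing}(\cI_!^\mu) = \LGr_e \star^{\cL^+G} \cI_!^\mu = \For^{\cL^+G}_{I_\unip}(\cI_!^\mu).
\]
The right-hand side restricted to $\Gr^\mu$ is the constant sheaf $\underline{\bk}_{\Gr^\mu}[\langle 2\rho,\mu\rangle]$, which is nonzero on every $I$-orbit $\Gr_{\minR_\nu} \subset \Gr^\mu$, $\nu \in W(\mu)$. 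But the standard perverse sheaf $\DGr_{t_{w_\circ(\mu)}}$ vanishes off the single open $I$-orbit $\Gr_{t_{w_\circ(\mu)}}$. So for any nontrivial $\mu$ and $G$ with $W \neq \{e\}$, the two objects are not isomorphic. The subtle point your proposal elides is that even when the convolution morphism $m$ is an isomorphism over the open stratum (which it trivially is here), the proper pushforward of $\LGr^A_y \mathbin{\widetilde\boxtimes} \cI_!^\mu$ need not be the shriek extension from that stratum: $\cI_!^\mu$ itself is not the shriek extension from the open $I$-orbit of $\Gr^\mu$, so the convolution cannot be either. Theorem~\ref{thm:geometric-Steinberg} holds for \emph{simple} objects, not for standards or costandards.

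The paper's argument deliberately avoids this trap. Rather than identify $\LGr_y \star^{\cL^+G} \cI_!^\mu$ with a single standard object, it directly controls $\Hom$ and $\Ext^1$ spaces between $\DGr_y \star^{\cL^+G}\cF$ and $\NGr_y \star^{\cL^+G}\cG$ for $\cF,\cG$ having standard, resp.\ costandard, filtrations. The key input is the estimate on fibre dimensions of the convolution map (Lemma~\ref{lem:dimension}, building on the Frenkel--Gaitsgory--Vilonen lemma), which yields the vanishing and one-dimensionality in Lemma~\ref{lem:vanishing-Steinberg-2}; one then reduces to the unit via rigidity and Proposition~\ref{prop:mathieu-thm}, and extends to arbitrary perverse sheaves via the existence of projective and injective objects with standard/costandard filtrations (a consequence of Mirkovi\'c--Vilonen). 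If you want to repair your outline you would have to replace the false isomorphism of perverse sheaves by the corresponding statement at the level of $\Hom$ and $\Ext^1$ spaces, which is exactly what the dimension estimates provide, and is in fact all that the highest weight/five lemma machinery you invoke actually requires. But in that form the argument coincides with the paper's.
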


%------------------------------------------------------------------
\subsection{Preliminaries}
\label{ss:proof-steinberg}
%------------------------------------------------------------------

Our goal in this subsection is to prove the technical Lemma~\ref{lem:dimension}, which will be used crucially in the proofs of Theorems~\ref{thm:geometric-Steinberg} and~\ref{thm:geometric-Steinberg-ff}. This result will follows from
some claims (essentially taken from~\cite{abbgm}) on dimensions of certain subschemes of $\Gr$. The starting point for these proofs is a lemma from~\cite{fgv} which is closely related to the ``geometric Casselman--Shalika formula'' 
proved independently in~\cite{fgv} and~\cite{np}.

For the general theory of ind-schemes we refer to~\cite{richarz}.
For any $\mu \in \bY$ we have the ``semi-infinite orbits''
\[
\mathrm{S}_\mu, \mathrm{T}_\mu \subset \Gr,
\]
where we follow the conventions of~\cite{mv} or~\cite{bar}. (These are ind-schemes, endowed with natural morphisms $\mathrm{S}_\mu \to \Gr$, $\mathrm{T}_\mu \to \Gr$ which are representable by locally closed immersions.) 
Recall that $U$, resp.~$U^+$, is the unipotent radical of $B$, resp.~the Borel subgroup opposite to $B$ with respect to $T$; we have the loop group $\cL U$ associated with $U$, resp.~the loop group $\cL U^+$ associated with $U^+$, and
\[
\mathrm{T}_\mu(\F) = \cL U(\F) \cdot L_\mu, \quad \mathrm{S}_\mu(\F) = \cL U^+(\F) \cdot L_\mu.
\]
A crucial feature of these sub-ind-schemes is the fact that if $\lambda \in \bY_+$ and $\mu \in \bY$, then the intersection $\mathrm{T}_\mu \cap \Gr^\lambda$, resp.~$\mathrm{S}_\mu \cap \Gr^\lambda$, is empty unless $\lambda-\mathsf{dom}(\mu)$ is a sum of positive coroots, and that in this case this intersection is a scheme of finite type such that
\[
\dim(\Gr^\lambda \cap \mathrm{S}_\mu) = \langle \rho, \lambda + \mu \rangle, \quad \text{resp.} \quad \dim(\Gr^\lambda \cap \mathrm{T}_\mu) = \langle \rho, \lambda - \mu \rangle;
\]
see~\cite[Theorem~3.2]{mv} for the original reference and~\cite[Theorem~1.5.2]{br} for a more detailed treatment and further references.

Let us fix, for any $\alpha \in -\fRs$, an isomorphism $U_\alpha \cong \Ga$, where $U_\alpha \subset U$ is the root subgroup associated with $\alpha$ (i.e.~the image of the subgroup of lower-triangular unipotent matrices under the morphism $\varphi_{-\alpha}$ from~\S\ref{ss:root-subgps}). Then we obtain a group homomorphism
\[
\chi : U \to U/(U,U) \xleftarrow{\sim} \prod_{\alpha \in -\fRs} U_\alpha \cong \prod_{\alpha \in -\fRs} \Ga \xrightarrow{+} \Ga.
\]
We will also denote by $\chi^+ : U^+ \to \Ga$ the composition of $\chi$ with the isomorphism $U^+ \simto U$ given by $g \mapsto \dot{w}_\circ g \dot{w}_\circ^{-1}$, and denote by $\chi_{\cL U}$, resp.~$\chi_{\cL U^+}$, the composition
\[
\cL U \xrightarrow{\cL\chi} \cL\Ga \xrightarrow{\mathrm{res}} \Ga, \quad \text{resp.} \quad \cL U^+ \xrightarrow{\cL\chi^+} \cL\Ga \xrightarrow{\mathrm{res}} \Ga
\]
where the first morphism is the morphism on loop groups induced by $\chi$, resp.~$\chi^+$, and the second one is the ``residue'' morphism sending a Laurent series to the coefficient of $z^{-1}$ (see e.g.~\cite[\S 3.4]{bgmrr}). Then for any $\mu \in \bY$ there exist unique morphisms $\chi^{\mathrm{T}}_\mu : \mathrm{T}_\mu \to \Ga$ and $\chi^{\mathrm{S}}_\mu : \mathrm{S}_\mu \to \Ga$ such that $\chi_\mu^{\mathrm{T}}(u \cdot L_\mu)=\chi_{\cL U}(z^{-\mu} u z^{\mu})$ for any $u \in \cL U$ and $\chi_\mu^{\mathrm{S}}(u \cdot L_\mu)=\chi_{\cL U^+}(z^{-\mu} u z^{\mu})$ for any $u \in \cL U^+$.

The starting point for our proofs is the following claim, taken from~\cite[Lemma~7.1.7]{fgv}.

\begin{lem}
\label{lem:fgv}
Let $\mu \in \bY$ and $\lambda \in \bY_+$ be such that $\mu \neq w_\circ(\lambda)$. Then the restriction of $\chi_\mu^{\mathrm{S}}$ to any irreducible component of $\mathrm{S}_\mu \cap \Gr^\lambda$ is dominant.
\end{lem}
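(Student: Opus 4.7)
The plan is to reduce the statement to a rank-one computation by using the decomposition of $\chi$ into simple-root components together with a $T$-equivariance argument. The character $\chi$ (and hence $\chi^+$) admits a canonical decomposition $\chi = \sum_{\alpha \in -\fRs} \chi_\alpha$ reflecting the product decomposition of $U/(U,U)$. Transporting this along the definition of $\chi^{\mathrm{S}}_\mu$ yields partial morphisms $\chi^{\mathrm{S}}_{\mu, \alpha} : \mathrm{S}_\mu \to \Ga$ with $\chi^{\mathrm{S}}_\mu = \sum_\alpha \chi^{\mathrm{S}}_{\mu, \alpha}$. Using the $T$-conjugation formulas of \S\ref{ss:root-subgps} and the fact that $T$ commutes with $z^\mu$ inside $\cL T$, each $\chi^{\mathrm{S}}_{\mu, \alpha}$ is $T$-equivariant of some nonzero weight, with these weights pairwise distinct as $\alpha$ varies (since distinct simple roots give distinct characters of $T$).

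Next, $T$ acts on $\mathrm{S}_\mu \cap \Gr^\lambda$ (as $T$ stabilizes both $L_\mu$ and $\Gr^\lambda$), and since $T$ is connected it preserves every irreducible component $Z$. Suppose $\chi^{\mathrm{S}}_\mu|_Z$ is constant; then for all $z \in Z$ and all $t \in T$ one has $\sum_\alpha w_\alpha(t)\, \chi^{\mathrm{S}}_{\mu, \alpha}(z)$ constant in $t$, where $w_\alpha$ denotes the $T$-weight of $\chi^{\mathrm{S}}_{\mu, \alpha}$. By linear independence of distinct characters, each $\chi^{\mathrm{S}}_{\mu, \alpha}(z)$ must vanish. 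As any non-constant morphism from an irreducible variety to $\Ga$ is automatically dominant, the lemma is reduced to the following: for every irreducible component $Z$ of $\mathrm{S}_\mu \cap \Gr^\lambda$, there exists $\alpha \in -\fRs$ such that $\chi^{\mathrm{S}}_{\mu, \alpha}|_Z \not\equiv 0$.

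Finally, each $\chi^{\mathrm{S}}_{\mu, \alpha}$ factors through the projection from $\mathrm{S}_\mu$ to the semi-infinite orbit in the affine Grassmannian of the rank-one Levi $L_\alpha \subset G$ associated with $\alpha$, reducing matters to an explicit rank-one calculation. Parametrizing $\cL U^+_\alpha$ by Laurent series and computing the residue of $z^{-\mu}(\cdot)z^\mu$ extracts a specific Laurent coefficient, which is non-constant precisely when the image of $Z$ is not contained in the minimal $\cL U^+_\alpha$-stratum of the rank-one Grassmannian. The hypothesis $\mu \neq w_\circ(\lambda)$ is exactly what prevents $Z$ from sitting at the bottom in all rank-one directions simultaneously, so some $\alpha$ witnesses non-vanishing. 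The main obstacle is making this last step uniform across every irreducible component of $\mathrm{S}_\mu \cap \Gr^\lambda$: this requires combining the Mirkovi\'c--Vilonen dimension formula $\dim(\mathrm{S}_\mu \cap \Gr^\lambda) = \langle \rho, \lambda + \mu \rangle$ with the MV-cycle description of $Z$, so as to guarantee that the projection of $Z$ to at least one rank-one affine Grassmannian is large enough for the corresponding Laurent coefficient to actually vary.
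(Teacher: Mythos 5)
The paper does not itself prove Lemma~\ref{lem:fgv}: as stated in the sentence immediately preceding it, the lemma is quoted directly from Frenkel--Gaitsgory--Vilonen, \cite[Lemma~7.1.7]{fgv}. There is therefore no in-paper proof to compare against, and your argument must be judged on its own merits. Your $T$-equivariance reduction is correct and clean: $L_\mu$ is fixed by $T$ (constant loops commute with $z^\mu$ inside $\cL T$), $\Gr^\lambda$ is $T$-stable, $T$ is connected and so preserves each irreducible component $Z$, and the decomposition $\chi^{\mathrm{S}}_\mu = \sum_{\alpha} \chi^{\mathrm{S}}_{\mu,\alpha}$ does make each summand a $T$-semi-invariant of weight a simple root. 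Since the simple roots are nontrivial and pairwise distinct characters, linear independence of characters forces $\chi^{\mathrm{S}}_{\mu,\alpha}|_Z \equiv 0$ for every $\alpha$ if $\chi^{\mathrm{S}}_\mu|_Z$ is constant, and a nonconstant morphism from an irreducible variety to $\Ga$ is dominant. So the lemma is correctly reduced to: for $\mu \neq w_\circ(\lambda)$, every component $Z$ has $\chi^{\mathrm{S}}_{\mu,\alpha}|_Z \not\equiv 0$ for at least one $\alpha$.

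The genuine gap is in proving that reduced claim, and you flag it yourself as ``the main obstacle.'' You assert that the hypothesis $\mu \neq w_\circ(\lambda)$ ``is exactly what prevents $Z$ from sitting at the bottom in all rank-one directions simultaneously,'' but the last paragraph only says what the proof would ``require,'' namely ``combining the Mirkovi\'c--Vilonen dimension formula with the MV-cycle description of $Z$.'' That is not a proof but a restatement of what needs doing, and it is precisely where the geometric content of the lemma lives. The dimension formula alone does not rule out a top-dimensional component of $\mathrm{S}_\mu \cap \Gr^\lambda$ lying entirely inside the common vanishing locus of the finitely many residue functions $\chi^{\mathrm{S}}_{\mu,\alpha}$, since each such condition a priori only cuts out a divisor. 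One must actually exhibit, on each component of $\mathrm{S}_\mu\cap\Gr^\lambda$, a one-parameter family along which some residue coefficient varies, which requires a concrete analysis of the action of $\cL U^+$ (and its congruence filtration) on the intersection; this is the work that~\cite{fgv} carries out and that your proposal does not. As written, the proposal reformulates the statement and identifies plausible ingredients, but it stops short of establishing the key implication.
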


We deduce the following.

\begin{lem}
\label{lem:geom-cas-sha}
For any $\mu \in \bY$, the intersection $\mathrm{S}_\mu \cap \mathrm{T}_0 := \mathrm{S}_\mu \times_{\Gr} \mathrm{T}_0$ is a scheme of finite type, empty unless $\mu$ is a sum of positive coroots, and of dimension at most $\langle \rho,\mu \rangle$. If $\mu \neq 0$, then we have
\[
\dim \bigl( \mathrm{S}_\mu \cap (\chi_0^{\mathrm{T}})^{-1}(0) \bigr) < \langle \rho,\mu \rangle.
\]
\end{lem}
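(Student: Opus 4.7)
My plan is to stratify the intersection by the $\cL^+G$-orbits $\Gr^\lambda$ on $\Gr$ and combine the dimension formulas recalled above with a $\mathrm{T}$-analog of Lemma~\ref{lem:fgv}. The starting point is the decomposition
\[
\mathrm{S}_\mu \cap \mathrm{T}_0 = \bigsqcup_{\lambda \in \bY_+} (\mathrm{S}_\mu \cap \mathrm{T}_0 \cap \Gr^\lambda),
\]
in which the $\lambda$-stratum is empty unless both $\mathrm{S}_\mu \cap \Gr^\lambda$ and $\mathrm{T}_0 \cap \Gr^\lambda$ are nonempty; by the formulas recalled above this forces $\lambda \geq \mathsf{dom}(\mu)$, $\lambda \geq 0$, and $\mu - w_\circ(\lambda) \geq 0$ in the order where $\eta \geq 0$ means $\eta$ is a nonnegative sum of positive coroots.

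For the first claim, I would appeal to standard Mirkovi\'c--Vilonen transversality: only finitely many $\lambda$ contribute and, on each stratum, $\dim(\mathrm{S}_\mu \cap \mathrm{T}_0 \cap \Gr^\lambda) \leq \langle \rho, \mu \rangle$ (the expected dimension obtained by subtracting the codimensions of $\mathrm{S}_\mu \cap \Gr^\lambda$ and $\mathrm{T}_0 \cap \Gr^\lambda$ in $\Gr^\lambda$). The emptiness under the hypothesis that $\mu$ is not a nonnegative sum of positive coroots is obtained by applying a regular dominant cocharacter $\eta$ to any point of the intersection: since $\mathrm{S}_\mu$ contracts to $L_\mu$ as $t\to 0$ while $\mathrm{T}_0$ contracts to $L_0$ as $t\to\infty$, this produces an irreducible curve $\mathbb{P}^1 \to \Gr$ joining $L_\mu$ and $L_0$ inside a single $\overline{\Gr^\lambda}$, and a weight analysis in the Plücker embedding forces $\mu$ to lie in the appropriate cone.

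For the second claim, I would first record the $\mathrm{T}$-analog of Lemma~\ref{lem:fgv}: for $\nu \in \bY$ and $\lambda \in \bY_+$ with $\nu \neq \lambda$, the restriction of $\chi_\nu^{\mathrm{T}}$ to any irreducible component of $\mathrm{T}_\nu \cap \Gr^\lambda$ is dominant. This is deduced from Lemma~\ref{lem:fgv} by interchanging the roles of $B$ and the opposite Borel, which swaps $\mathrm{S}$- with $\mathrm{T}$-orbits and converts the extremal condition $\mu = w_\circ(\lambda)$ into $\nu = \lambda$. Specializing to $\nu = 0$: for $\lambda \neq 0$, $\chi_0^{\mathrm{T}}$ is dominant on each component of $\mathrm{T}_0 \cap \Gr^\lambda$, so $(\chi_0^{\mathrm{T}})^{-1}(0) \cap \mathrm{T}_0 \cap \Gr^\lambda$ has codimension at least one in $\mathrm{T}_0 \cap \Gr^\lambda$. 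When $\mu \neq 0$, every contributing $\lambda$ satisfies $\lambda \geq \mathsf{dom}(\mu) \neq 0$, so $\lambda \neq 0$; combining this codimension-one drop with the transversality already used for the first claim yields
\[
\dim(\mathrm{S}_\mu \cap (\chi_0^{\mathrm{T}})^{-1}(0) \cap \Gr^\lambda) \leq \langle \rho, \mu \rangle - 1,
\]
and taking the supremum over $\lambda$ gives the strict inequality.

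The main technical obstacle I anticipate is the general-position argument needed to combine the individual codimension estimates (of $\mathrm{S}_\mu \cap \Gr^\lambda$ in $\Gr^\lambda$, and of the zero locus of $\chi_0^{\mathrm{T}}$ in $\mathrm{T}_0 \cap \Gr^\lambda$) into the combined bound on the intersection. This is essentially the same Mirkovi\'c--Vilonen-style transversality already needed for the first claim, now applied inside the zero-fiber of $\chi_0^{\mathrm{T}}$, and is the shared engine behind both parts.
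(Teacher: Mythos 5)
Your overall strategy---stratify $\mathrm{S}_\mu\cap\mathrm{T}_0$ by the $\cL^+G$-orbits $\Gr^\lambda$ and combine the semi-infinite dimension estimates on each stratum with a $\mathrm{T}$-version of Lemma~\ref{lem:fgv}---is genuinely different from what the paper does, and the $\mathrm{T}$-analog of Lemma~\ref{lem:fgv} you extract (via conjugation by $\dot{w}_\circ$, turning $\mathrm{S}_\nu\cap\Gr^\lambda$ into $\mathrm{T}_{w_\circ(\nu)}\cap\Gr^\lambda$ and $\mu\neq w_\circ(\lambda)$ into $\nu\neq\lambda$) is correct. But there is a real gap at exactly the point you flag as ``the main technical obstacle,'' and you are underestimating it: subtracting the codimensions of $\mathrm{S}_\mu\cap\Gr^\lambda$ and $\mathrm{T}_0\cap\Gr^\lambda$ in $\Gr^\lambda$ gives, via standard intersection theory, a \emph{lower} bound on the dimension of each component of the triple intersection, not the \emph{upper} bound $\dim(\mathrm{S}_\mu\cap\mathrm{T}_0\cap\Gr^\lambda)\le\langle\rho,\mu\rangle$ that you need. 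The equidimensionality of $\mathrm{S}_\mu\cap\mathrm{T}_\nu$ (or the refined version inside $(\chi_0^{\mathrm{T}})^{-1}(0)$) in the required dimension is a genuine theorem about these hyperbolic strata, not a formal transversality consequence, and your proposal neither cites it nor proves it. The same issue recurs when you try to combine the codimension-one drop from the $\mathrm{T}$-analog of Lemma~\ref{lem:fgv} with the dimension of $\mathrm{S}_\mu\cap\Gr^\lambda$: nothing forces these two cuts to be independent.

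The paper avoids all of this by a translation trick: multiplying by $z^\lambda\dot{w}_\circ^{-1}$ identifies $\mathrm{S}_\mu\cap\mathrm{T}_0$ with $\mathrm{T}_{\lambda+w_\circ(\mu)}\cap\mathrm{S}_\lambda$, and then appeals to~\cite[Proposition~6.4]{bfgm} to show that for $\lambda$ sufficiently antidominant this is \emph{entirely contained} in a single $\cL^+G$-orbit $\Gr^{w_\circ(\lambda)+\mu}$. At that point one only needs the single-stratum inclusion into $\mathrm{S}_\lambda\cap\Gr^{w_\circ(\lambda)+\mu}$ and the MV dimension formula $\dim(\mathrm{S}_\lambda\cap\Gr^{w_\circ(\lambda)+\mu})=\langle\rho,\mu\rangle$ --- no transversality across strata is needed at all --- and the map $\chi_0^{\mathrm{T}}$ becomes the restriction of $\chi_\lambda^{\mathrm{S}}$, so Lemma~\ref{lem:fgv} applies \emph{directly}, without an auxiliary $\mathrm{T}$-analog or any refined transversality inside the zero fiber. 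I would suggest you either adopt this translation trick, or, if you want to keep the stratification approach, locate and cite a precise reference for the purity/dimension statement about $\mathrm{S}_\mu\cap\mathrm{T}_0\cap\Gr^\lambda$ (which does exist in the literature) and make explicit the additional argument that $(\chi_0^{\mathrm{T}})^{-1}(0)$ cuts down the dimension on each stratum; as written, the key inequality is asserted rather than proved.
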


\begin{proof}
The fact that $\mathrm{S}_\mu \cap \mathrm{T}_0$ is a scheme of finite type is noted in~\cite[Proof of Proposition~6.4]{bfgm}. For any $\lambda \in \bY$, multiplication by $z^\lambda \dot{w}_\circ^{-1}$ induces an isomorphism
\[
\mathrm{S}_\mu \cap \mathrm{T}_0 \simto \mathrm{T}_{\lambda+w_\circ(\mu)} \cap \mathrm{S}_\lambda.
\]
Now by~\cite[Proposition~6.4]{bfgm}, if $\lambda$ is sufficiently far in the antidominant cone the right-hand side is contained in $\Gr^{w_\circ(\lambda)+\mu}$, and hence in $\Gr^{w_\circ(\lambda)+\mu} \cap \mathrm{S}_\lambda$, which as explained above is empty unless the coweight $w_\circ(\lambda)+\mu - w_\circ(\lambda)=\mu$
%\[
%\lambda - w_\circ(w_\circ(\lambda)+\mu) = \lambda - (\lambda+w_\circ(\mu)) = -w_\circ(\mu)
%\]
is a sum of positive coroots and has dimension 
\[
\langle \rho, w_\circ(\lambda)+\mu+\lambda \rangle = \langle \rho,\mu \rangle
\]
in this case.
Through this identification, the map $\chi_0^{\mathrm{T}}$ becomes the restriction of $\chi^{\mathrm{S}}_\lambda$. By Lemma~\ref{lem:fgv}, if $\mu \neq 0$ this map is nonconstant on any irreducible component of $\Gr^{w_\circ(\lambda)+\mu} \cap \mathrm{S}_\lambda$,
%of dimension $\langle \rho,\mu \rangle$, 
which implies that $\dim ( \mathrm{S}_\mu \cap (\chi_0^{\mathrm{T}})^{-1}(0) ) < \langle \rho,\mu \rangle$, as desired.
\end{proof}

\begin{cor}
\label{cor:dim-intersection}
Let $y \in W_\ext^S$, $\mu \in \bY_+$ and $\nu \in \bY$, 
and write $y=wt_\lambda$ with $w \in W$ and $\lambda \in \bY$. The intersection $(\dot{w} \mathrm{S}_{\nu}) \cap \Gr_y$ is empty unless $w_\circ(\lambda)-\mathsf{dom}(\nu)$ is a sum of positive coroots, and in this case we have
\[
\dim \bigl( (\dot{w} \mathrm{S}_{\nu}) \cap \Gr_y \bigr) \leq \langle \rho, \nu-\lambda \rangle.
\]
Moreover, if $y \in W_\ext^\res$ and $\nu \neq \lambda$ this inequality is strict.
\end{cor}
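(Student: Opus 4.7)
The plan is to translate the intersection by $\dot y^{-1}$ on the left, reducing everything to an intersection through the basepoint $L_0$, and then invoke Lemma~\ref{lem:geom-cas-sha}. The key geometric input will be that for any $y \in W_\ext^S$, the coweight $\lambda$ in the unique decomposition $y = wt_\lambda$ is automatically antidominant: writing $y = \minR_\mu$ and using~\eqref{eqn:formula-minR} and~\eqref{eqn:formula-minL}, one finds $\lambda = w_\circ(\mathsf{dom}(\mu))$, so in particular $\mathsf{dom}(\lambda) = w_\circ(\lambda)$.

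Concretely, using $\dot y = \dot w z^\lambda$ and the fact that $\cL T$ normalizes $\cL U^+$, I would verify that $\dot y^{-1}(\dot w \mathrm{S}_\nu) = z^{-\lambda}\cL U^+ \cdot L_\nu = \cL U^+ \cdot L_{\nu-\lambda} = \mathrm{S}_{\nu-\lambda}$, giving a bijection
\[
(\dot w \mathrm{S}_\nu) \cap \Gr_y \;\cong\; \mathrm{S}_{\nu-\lambda} \cap (J_y \cdot L_0),
\]
where $J_y := \dot y^{-1} I \dot y$ is a ``twisted Iwahori'' subgroup of $\cL G$. The emptiness statement would then follow from $\Gr_y \subset \Gr^{\mathsf{dom}(\lambda)}$ (since $\dot w \in \cL^+G$ preserves each $\cL^+G$-orbit) combined with the standard dimension formula for $\mathrm{S}_\nu \cap \Gr^{\mathsf{dom}(\lambda)}$, which forces $\mathsf{dom}(\lambda) - \mathsf{dom}(\nu) \geq 0$, equivalent to $w_\circ(\lambda) - \mathsf{dom}(\nu) \geq 0$ by the antidominance above.

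For the dimension bound, the crucial fact I would establish is the containment $J_y \cdot L_0 \subset \mathrm{T}_0$. Using the factorization $I = \cL^+T \cdot \cL^+U \cdot \cL^{+,\geq 1}U^+$ and direct conjugation formulas, $J_y$ admits an affine-root decomposition as a product over $\gamma \in \fR$ of subgroups $U_\gamma(z^{m(\gamma)}\F[[z]])$ with $m(\gamma) = -\langle\gamma,\lambda\rangle$ if $w\gamma \in -\fR_+$ and $m(\gamma) = 1 - \langle\gamma,\lambda\rangle$ if $w\gamma \in \fR_+$ (together with $\cL^+T$). The ``negative-level'' part surviving in the quotient $J_y/(J_y \cap \cL^+G)$ consists of root subgroups with $m(\gamma) < 0$; a case analysis using $\langle\gamma,\lambda\rangle \leq 0$ for $\gamma \in \fR_+$ (antidominance of $\lambda$) forces such $\gamma$ to lie in $-\fR_+$, so the corresponding root subgroups lie in $\cL U$. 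This yields $J_y \cdot L_0 \subset \cL U \cdot L_0 = \mathrm{T}_0$, and Lemma~\ref{lem:geom-cas-sha} immediately gives
\[
\dim \bigl( \mathrm{S}_{\nu-\lambda} \cap (J_y \cdot L_0) \bigr) \;\leq\; \dim(\mathrm{S}_{\nu-\lambda} \cap \mathrm{T}_0) \;\leq\; \langle\rho, \nu-\lambda\rangle.
\]

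For the strict inequality when $y \in W_\ext^\res$ and $\nu \neq \lambda$, I would upgrade this to $J_y \cdot L_0 \subset (\chi_0^{\mathrm{T}})^{-1}(0)$ inside $\mathrm{T}_0$. Since $\chi_0^{\mathrm{T}}$ extracts the $z^{-1}$-coefficients of the simple-root components of an element of $\cL U$, the point is to verify that $U_{-\alpha}(\F)z^{-1} \not\subset J_y$ for any simple root $\alpha$. Substituting the values $\langle\alpha,\lambda\rangle \in \{0,-1\}$ provided by Lemma~\ref{lem:res-elements} into the formula for $m(\gamma)$ at $\gamma = -\alpha$, in both cases $w\alpha \in \pm\fR_+$ the computation yields $m(-\alpha) = 0$, excluding level $-1$. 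Hence $\chi_0^{\mathrm{T}}$ vanishes on $J_y \cdot L_0$, and the strict inequality in Lemma~\ref{lem:geom-cas-sha} delivers $\dim < \langle\rho, \nu-\lambda\rangle$. The main technical obstacle will be the careful bookkeeping in the affine-root decomposition of $J_y$, especially in verifying that the non-$\cL^+G$ contributions live entirely in $\cL U$; once that combinatorial computation is carried out, both the dimension bound and the strict inequality follow cleanly from the lemmas already established.
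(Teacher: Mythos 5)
Your proposal is correct and matches the paper's own proof in all essential respects: translate by $\dot y^{-1}$ to the base point (giving $\mathrm{S}_{\nu-\lambda}\cap(\dot y^{-1}I\dot y\cdot L_0)$), use the containment $\dot w^{-1}I\dot w\cdot L_\lambda\subset\Gr^{w_\circ(\lambda)}$ for the emptiness criterion and dimension count, and obtain the strict inequality from the affine-root decomposition of the twisted Iwahori together with the computation $m(-\alpha)=0$ for $\alpha\in\fRs$ (forced by Lemma~\ref{lem:res-elements}), giving $\dot y^{-1}I\dot y\cdot L_0\subset(\chi_0^{\mathrm{T}})^{-1}(0)$ and hence Lemma~\ref{lem:geom-cas-sha}. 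The only cosmetic difference is that you route the non-strict bound through $\dot y^{-1}I\dot y\cdot L_0\subset\mathrm{T}_0$ and Lemma~\ref{lem:geom-cas-sha}, whereas the paper reads it off directly from $\dim(\mathrm{S}_\nu\cap\Gr^{w_\circ(\lambda)})=\langle\rho,\nu-\lambda\rangle$; this is immaterial since the $\mathrm{T}_0$-containment is needed anyway for the strict case.
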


\begin{proof}
Note that the coweight $\lambda$ is necessarily antidominant here, by Lemma~\ref{lem:res-elements} and~\eqref{eqn:WS-Wres}. 
We have
\[
(\dot{w} \mathrm{S}_{\nu}) \cap \Gr_y = (\dot{w} \mathrm{S}_{\nu}) \cap (I \cdot \dot{w} L_\lambda) = \dot{w} \cdot \bigl( \mathrm{S}_\nu \cap (\dot{w}^{-1} I \dot{w} L_\lambda) \bigr) \subset \dot{w} \cdot \bigl( \mathrm{S}_\nu \cap \Gr^{w_\circ(\lambda)} \bigr).
\]
The usual properties of intersections of spherical orbits with semi-infinite orbits recalled at the beginning of the subsection show that the right-hand side is empty unless $w_\circ(\lambda)-\mathsf{dom}(\nu)$ is a sum of positive coroots, and has dimension $\langle \rho, w_\circ(\lambda)+\nu \rangle = \langle \rho, \nu-\lambda \rangle$ in this case. Our first claim follows.

Recall that for any $\beta \in \fR$ and $n \in \Z$ we have a ``root subgroup'' $U_{\beta,n} \subset \cL G$ defined as in~\cite[Proof of Lemma~3.10]{bgmrr}. If we set
\[
J := \prod_{\beta \in -\fR_+} \prod_{i=n_\beta}^{\langle \lambda,\beta \rangle-1} U_{\beta,i} \subset \dot{w}^{-1} I \dot{w} \quad
\text{where}
\quad
n_\beta = \begin{cases}
0 & \text{if $w(\beta) \in -\fR_+$;} \\
1 & \text{otherwise,}
\end{cases}
\]
and where the first product is ordered in any fixed arbitrary way,
then since $\lambda$ is antidominant the composition of the product morphism with the map $g \mapsto g \cdot L_{\lambda}$ induces an isomorphism
$J \simto \dot{w}^{-1} \Gr_y$;
in other words, if we set
\[
J' := \prod_{\beta \in -\fR_+} \prod_{i=n_\beta - \langle \lambda,\beta \rangle}^{-1} U_{\beta,i},
\]
then the composition of the product morphism with the map $g \mapsto g \cdot L_0$ induces an isomorphism
\[
J' \simto z^{-\lambda} \dot{w}^{-1} \Gr_y.
\]
This shows in particular that $z^{-\lambda} \dot{w}^{-1} \Gr_y \subset \mathrm{T}_0$.

Now, assume that $y \in W_\ext^\res$. If $\beta \in -\fRs$, then by Lemma~\ref{lem:res-elements} we have $\langle \lambda, \beta \rangle = n_\beta$. Hence in $J'$ there is no factor corresponding to the opposite of a simple root, which implies that $z^{-\lambda} \dot{w}^{-1} \Gr_y \subset (\chi_0^{\mathrm{T}})^{-1}(0)$. Since 
\[
(\dot{w} \mathrm{S}_{\nu}) \cap \Gr_y = \dot{w} z^\lambda \cdot \bigl( \mathrm{S}_{\nu-\lambda} \cap (z^{-\lambda} \dot{w}^{-1} \cdot \Gr_y) \bigr),
\]
Lemma~\ref{lem:geom-cas-sha} then implies our second claim.
\end{proof}

Consider the ``twisted product'' 
\[
\Gr \wttimes \Gr := \cL G \times^{\cL^+ G} \Gr
\]
and the (proper) morphism $m : \Gr \wttimes \Gr \to \Gr$ induced by multiplication in $\cL G$. 
Given locally closed subschemes $X,Y \subset \Gr$, we can consider the locally closed subscheme $X \wttimes Y \subset \Gr \wttimes \Gr$ defined as $X' \times^{\cL^+ G} Y$ where $X'$ is the preimage of $X$ in $\cL G$. In particular, for $y \in W_\ext^S$ and $\mu \in \bY_+$ we have the twisted product
$\Gr_y \wttimes \Gr^\mu \subset \Gr \wttimes \Gr$; we will denote by
\[
m_{y,\mu} : \Gr_y \wttimes \Gr^\mu \to \Gr
\]
the morphism induced by $m$.

\begin{lem}
\label{lem:dimension}
Let $y \in W_\ext^S$, $\mu \in \bY_+$ and $\eta \in \bY$. 
Then we have
\[
\dim(m_{y,\mu}^{-1}(\dot{y} L_\eta)) \leq \langle \rho, \mu+\eta \rangle.
\]
Moreover, this inequality is strict if $y \in W_\ext^\res$ and $\eta \neq w_\circ(\mu)$.
\end{lem}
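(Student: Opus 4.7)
The plan is to identify the fiber $m_{y,\mu}^{-1}(\dot{y}L_\eta)$ with an intersection inside $\Gr$, stratify by semi-infinite orbits, and apply Corollary~\ref{cor:dim-intersection}. The $\cL G$-equivariant isomorphism $\Gr \wttimes \Gr \simto \Gr \times \Gr$ given by $[g_1,g_2] \mapsto ([g_1], [g_1g_2])$ intertwines $m$ with the second projection, and identifies $\Gr_y \wttimes \Gr^\mu$ with the set of $(p,q) \in \Gr_y \times \Gr$ for which some (equivalently every) choice of lifts $g_1, g_3 \in \cL G$ of $p, q$ satisfies $g_1^{-1}g_3 \in \cL^+G\,z^\mu\cL^+G$. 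Taking $g_3 = \dot{y}z^\eta$ and using that $\cL^+G\,z^{-\mu}\cL^+G = \cL^+G\,z^{-w_\circ(\mu)}\cL^+G$ for $\mu \in \bY_+$, one obtains
\[
m_{y,\mu}^{-1}(\dot{y}L_\eta) \cong \Gr_y \cap \bigl(\dot{y}z^\eta \cdot \Gr^{-w_\circ(\mu)}\bigr).
\]

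Writing $y = wt_\lambda$ (so that $\lambda \in -\bY_+$, as recalled in the proof of Corollary~\ref{cor:dim-intersection}), one has $\dot{y}z^\eta = \dot{w}z^{\lambda+\eta}$. I would then use the semi-infinite stratification $\Gr^{-w_\circ(\mu)} = \bigsqcup_\nu \bigl(\Gr^{-w_\circ(\mu)} \cap \mathrm{S}_\nu\bigr)$; since $T$ normalizes $U^+$, $z^{\lambda+\eta}\mathrm{S}_\nu = \mathrm{S}_{\lambda+\eta+\nu}$, and the fiber decomposes as a disjoint union $\bigsqcup_\nu F_\nu$ with
\[
F_\nu \subset \Gr_y \cap \bigl(\dot{w}\mathrm{S}_{\lambda+\eta+\nu}\bigr).
\]
Corollary~\ref{cor:dim-intersection} yields $\dim F_\nu \leq \langle\rho, \eta+\nu\rangle$. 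On the other hand, the Mirkovi{\'c}--Vilonen nonemptiness criterion for $\Gr^{-w_\circ(\mu)} \cap \mathrm{S}_\nu$ forces $-w_\circ(\mu) - \dom(\nu)$ to be a sum of positive coroots; combined with the general fact that $\dom(\nu) - \nu$ is a sum of positive coroots, pairing with $\rho$ gives $\langle\rho,\nu\rangle \leq \langle\rho,\dom(\nu)\rangle \leq \langle\rho,-w_\circ(\mu)\rangle = \langle\rho,\mu\rangle$. Thus $\dim F_\nu \leq \langle\rho,\mu+\eta\rangle$ for every $\nu$, proving the first assertion.

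For the strict inequality when $y \in W_\ext^\res$ and $\eta \neq w_\circ(\mu)$: since $\rho$ pairs strictly positively with every nonzero sum of positive coroots, equality $\langle\rho,\nu\rangle = \langle\rho,\mu\rangle$ forces $\nu = -w_\circ(\mu)$. Hence for $\nu \neq -w_\circ(\mu)$ (with $F_\nu$ nonempty), the inequality $\dim F_\nu < \langle\rho,\mu+\eta\rangle$ is already strict. For the remaining case $\nu = -w_\circ(\mu)$, the shift $(\lambda+\eta+\nu) - \lambda = \eta - w_\circ(\mu)$ is nonzero by assumption, so the strict half of Corollary~\ref{cor:dim-intersection} applies and yields $\dim F_\nu < \langle\rho,\eta+\nu\rangle = \langle\rho,\mu+\eta\rangle$.

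I expect the main obstacle to be the initial identification of the fiber via the convolution-style isomorphism $\Gr \wttimes \Gr \cong \Gr \times \Gr$ and the translation of the condition ``second factor lies in $\Gr^\mu$'' into membership of $p$ in $\dot{y}z^\eta\cdot\Gr^{-w_\circ(\mu)}$. Once this is in place, the dimension bound follows directly from Corollary~\ref{cor:dim-intersection} together with the standard nonemptiness and dimension formulas for intersections of spherical and semi-infinite orbits, and the strict case just uses the strict halves of both inputs.
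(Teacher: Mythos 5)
Your proof is correct, and it reaches the same bound via a cleaner identification of the fiber than the one used in the paper. The paper keeps the fiber inside the twisted product $\Gr_y \wttimes \Gr^\mu$, stratifies by the first factor lying in $(\dot{w}\mathrm{S}_\nu) \cap \Gr_y$, and then proves by hand (via the embedding $\overline{\Gr_y}\wttimes\overline{\Gr^\mu} \hookrightarrow \overline{\Gr_y}\times X_{y,\mu}$) that the fiber over $\dot y L_\eta$ restricted to each stratum maps to $(\dot{w}\mathrm{S}_\nu)\cap\Gr_y$ by a locally closed immersion; it then feeds Corollary~\ref{cor:dim-intersection} in together with the nonemptiness constraint on $\Gr^\mu \cap \mathrm{S}_{\lambda+\eta-\nu}$. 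You instead flatten the twisted product through the $\cL G$-equivariant isomorphism $\Gr\wttimes\Gr \cong \Gr\times\Gr$, which immediately realizes the whole fiber as a genuine intersection $\Gr_y \cap \bigl(\dot y z^\eta\cdot\Gr^{-w_\circ(\mu)}\bigr)$; the stratification by semi-infinite orbits of $\Gr^{-w_\circ(\mu)}$ then gives locally closed pieces sitting inside $(\dot w\mathrm{S}_{\lambda+\eta+\nu})\cap\Gr_y$ for free, and the nonemptiness constraint you use, on $\Gr^{-w_\circ(\mu)}\cap\mathrm{S}_\nu$, is the mirror image of the paper's. The two stratifications coincide after reindexing ($\nu_{\text{paper}} = \lambda+\eta+\nu_{\text{yours}}$), and both then invoke the same two inputs (Corollary~\ref{cor:dim-intersection} and the MV dimension/nonemptiness facts) in the same way. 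What your route buys is that it bypasses the paper's somewhat fiddly locally-closed-embedding verification; what the paper's buys is that it stays entirely inside the standard twisted-product picture without invoking the product isomorphism. Both handle the strict case identically, by splitting into $\nu \ne -w_\circ(\mu)$ (where the nonemptiness constraint is strict) and $\nu = -w_\circ(\mu)$ (where the strict half of Corollary~\ref{cor:dim-intersection} kicks in because $\eta - w_\circ(\mu) \ne 0$).
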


\begin{proof}
Let us write $y=w t_\lambda$ with $w \in W$ and $\lambda \in \bY$; then $\lambda$ is antidominant (see Lemma~\ref{lem:res-elements} and~\eqref{eqn:WS-Wres}).
We note that we have a decomposition into locally closed pieces
\[
\Gr_y \wttimes \Gr^\mu = \bigsqcup_{\nu \in \bY}  ((\dot{w} \mathrm{S}_{\nu}) \cap \Gr_y) \wttimes \Gr^\mu,
\]
with only finitely many nonempty terms in the right-hand side; to compute the dimension in this statement it therefore suffices to consider the intersections
\[
m_{y,\mu}^{-1}(\dot{y} L_\eta) \cap \bigl( ((\dot{w} \mathrm{S}_{\nu}) \cap \Gr_y) \wttimes \Gr^\mu \bigr) = m_{y,\mu}^{-1}(\dot{w} L_{\lambda + \eta}) \cap \bigl( ((\dot{w} \mathrm{S}_{\nu}) \cap \Gr_y) \wttimes \Gr^\mu \bigr)
\]
for all $\nu \in \bY$.

Let us consider some $\nu \in \bY$ such that $(\dot{w} \mathrm{S}_{\nu}) \cap \Gr_y \neq \varnothing$, and choose $g \in \cL U^+$ such that $\dot{w} g \cdot L_\nu \in (\dot{w} \mathrm{S}_{\nu}) \cap \Gr_y$. Then if $m_{y,\mu}^{-1}(\dot{w} L_{\lambda + \eta}) \cap \bigl( ((\dot{w} \mathrm{S}_{\nu}) \cap \Gr_y) \wttimes \Gr^\mu \bigr)$ is nonempty, there exists $a \in \Gr^\mu$ such that $\dot{w} g z^\nu \cdot a = \dot{w} L_{\lambda + \eta}$; in particular we have $\Gr^\mu \cap \mathrm{S}_{\lambda+\eta-\nu} \neq \varnothing$, which implies that
\begin{equation}
\label{eqn:dim-inequality}
\langle \rho,\mu+\lambda+\eta-\nu \rangle \geq 0,
\end{equation}
this inequality being strict unless $\lambda+\eta-\nu=w_\circ(\mu)$, i.e.~$\nu=\lambda+\eta-w_\circ(\mu)$.

We claim that if $m_{y,\mu}^{-1}(\dot{w} L_{\lambda + \eta}) \cap \bigl( ((\dot{w} \mathrm{S}_{\nu}) \cap \Gr_y) \wttimes \Gr^\mu \bigr)$ is nonempty, then the natural morphism
\[
m_{y,\mu}^{-1}(\dot{w} L_{\lambda + \eta}) \cap \bigl( ((\dot{w} \mathrm{S}_{\nu}) \cap \Gr_y) \wttimes \Gr^\mu \bigr) \to (\dot{w} \mathrm{S}_{\nu}) \cap \Gr_y
\]
is a locally closed immersion. Indeed, if we denote by $X_{y,\mu}$ the image of $\overline{\Gr_y} \wttimes \overline{\Gr^\mu}$ under the proper morphism $m$ (a closed subscheme of $\Gr$), then the canonical morphism
\[
\overline{\Gr_y} \wttimes \overline{\Gr^\mu} \to \overline{\Gr_y} \times X_{y,\mu}
\]
is a closed immersion.
If we denote by $Y_{y,\mu}^\eta \subset \overline{\Gr_y} \wttimes \overline{\Gr^\mu}$ the inverse image of $\overline{\Gr_y} \times \{\dot{w} L_{\lambda + \eta} \}$ under this map, then the natural morphism
\[
Y_{y,\mu}^\eta \to \overline{\Gr_y}
\]
is a closed immersion, and hence so is its restriction
\[
Y_{y,\mu}^\eta \cap \bigl( ((\dot{w} \mathrm{S}_{\nu}) \cap \Gr_y) \wttimes \overline{\Gr^\mu} \bigr) \to (\dot{w} \mathrm{S}_{\nu}) \cap \Gr_y
\]
to the preimage of $(\dot{w} \mathrm{S}_{\nu}) \cap \Gr_y$. Our claim follows, since $m_{y,\mu}^{-1}(\dot{w} L_{\lambda + \eta}) \cap \bigl( ((\dot{w} \mathrm{S}_{\nu}) \cap \Gr_y) \wttimes \Gr^\mu \bigr)$ is the intersection of the domain of the latter morphism with the open subscheme $((\dot{w} \mathrm{S}_{\nu}) \cap \Gr_y) \wttimes \Gr^\mu$.

This claim implies that whenever $m_{y,\mu}^{-1}(\dot{w} L_{\lambda + \eta}) \cap \bigl( ((\dot{w} \mathrm{S}_{\nu}) \cap \Gr_y) \wttimes \Gr^\mu \bigr)$ is nonempty we have
\[
\dim \bigl( m_{y,\mu}^{-1}(\dot{y} L_\eta) \cap \bigl( ((\dot{w} \mathrm{S}_{\nu}) \cap \Gr_y) \wttimes \Gr^\mu \bigr) \bigr)
\leq \dim((\dot{w} \mathrm{S}_{\nu}) \cap \Gr_y).
\]
By Corollary~\ref{cor:dim-intersection} the right-hand side is at most $\langle \rho, \nu-\lambda \rangle$; combining this observation with~\eqref{eqn:dim-inequality} we deduce that
\[
\dim \bigl( m_{y,\mu}^{-1}(\dot{y} L_\eta) \cap \bigl( ((\dot{w} \mathrm{S}_{\nu}) \cap \Gr_y) \wttimes \Gr^\mu \bigr) \bigr)
\leq \langle \rho, \mu+\eta \rangle,
\]
which implies our first claim, and moreover that this inequality is strict unless $\nu=\lambda+\eta-w_\circ(\mu)$.

If we furthermore assume that $y \in W_\ext^\res$ and $\eta \neq w_\circ(\mu)$, then $\lambda+\eta-w_\circ(\mu) \neq \lambda$. The second claim in Corollary~\ref{cor:dim-intersection} shows that
\[
\dim \bigl( (\dot{w} \mathrm{S}_{\lambda+\eta-w_\circ(\mu)}) \cap \Gr_y \bigr) < \langle \rho, \eta-w_\circ(\mu) \rangle = \langle \rho, \mu+\eta \rangle,
\]
which shows our second claim.
\end{proof}

We finish this subsection with a reminder on some aspects of the geometric Satake equivalence (see~\S\ref{ss:Satake-category}) that will be used in our proofs below. Recall the $\cL^+G$-equivariant derived category $\Db_{\cL^+ G}(\Gr,\bk)$, its subcategory of perverse sheaves $\Perv_{\cL^+ G}(\Gr,\bk)$, and the (exact) convolution product $\star^{\cL^+G}$ introduced in~\S\ref{ss:Satake-category}.
Below we will use the fact that the monoidal category
\[
(\Perv_{\cL^+ G}(\Gr,\bk), \star^{\cL^+G})
\]
is rigid: every object $\cF$ has a left and right dual $\cF^\vee$. (This fact can either be checked directly or deduced from the geometric Satake equivalence.) We will not need an explicit description of this operation, but only that for $\mu \in \bY_+$ we have
\begin{equation}
\label{eqn:duals}
(\cI_!^\mu)^\vee \cong \cI_*^{-w_\circ(\mu)}, \quad (\cI_*^\mu)^\vee \cong \cI_!^{-w_\circ(\mu)}, \quad (\IC^\mu)^\vee \cong \IC^{-w_\circ(\mu)}.
\end{equation}

Our proof will also make use of the following result.

\begin{prop}
\label{prop:mathieu-thm}
For any $\lambda, \mu \in \bY_+$ the object $\cI_!^\lambda \star^{\cL^+G} \cI_!^\mu$ admits a filtration with subquotients of the form $\cI_!^\nu$ with $\nu \in \bY_+$. Dually, for any $\lambda, \mu \in \bY_+$ the object $\cI_*^\lambda \star^{\cL^+G} \cI_*^\mu$ admits a filtration with subquotients of the form $\cI_*^\nu$, with $\nu \in \bY_+$.
\end{prop}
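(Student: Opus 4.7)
The plan is to reduce the statement to a classical theorem in modular representation theory via the geometric Satake equivalence. Recall from Subsection~\ref{ss:Satake-category} that under the monoidal equivalence
\[
\Satake : (\Perv_{\cL^+G}(\Gr,\bk), \star^{\cL^+G}) \simto (\Rep(G^\vee_\bk), \otimes),
\]
the object $\cI_!^\mu$ corresponds to the Weyl module of highest weight $\mu$ for $G^\vee_\bk$, while $\cI_*^\mu$ corresponds to the induced (or dual Weyl) module of highest weight $\mu$. In particular, $\cI_!^\lambda \star^{\cL^+G} \cI_!^\mu$ corresponds to the tensor product of the two Weyl modules attached to $\lambda$ and $\mu$, and likewise in the costandard case.

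After this translation, the first assertion becomes the classical statement that a tensor product of two Weyl modules for a split reductive group over $\bk$ admits a filtration whose subquotients are Weyl modules (labelled by some dominant weights $\nu \in \bY_+$), and the second assertion becomes the Weyl-dual statement for induced modules. Both are instances of Mathieu's theorem on the preservation of Weyl and good filtrations under tensor products; the standard reference is~\cite[Proposition~II.4.21]{jantzen}. In the case where $\bk$ has characteristic~$0$, the category $\Rep(G^\vee_\bk)$ is semisimple and $\cI_!^\nu \cong \cI_*^\nu \cong \IC^\nu$ for all $\nu \in \bY_+$, so the statement is immediate from complete reducibility.

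The only nontrivial ingredient is Mathieu's theorem itself, whose original proof relies on Frobenius splittings of flag varieties and is independent of the geometric framework developed in this paper. One could in principle attempt a purely geometric argument, working with the proper convolution morphism $m : \overline{\Gr^\lambda} \wttimes \overline{\Gr^\mu} \to \Gr$ and constructing the desired filtration from the stratification of the source by preimages of $\cL^+G$-orbits (together with adjunction triangles of the form $j_!j^* \to \id \to i_*i^* \xrightarrow{+1}$ applied iteratively); however, such an approach would essentially reconstruct Mathieu's theorem in geometric language, and so the representation-theoretic invocation is both shorter and more natural. I therefore expect the main obstacle in this proof to be not a technical step but the reliance on this deep external input, rather than on methods internal to the paper.
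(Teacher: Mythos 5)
Your argument matches the paper's own (sketched) justification: the paper explicitly remarks that the proposition can be deduced from Mathieu's theorem on good/Weyl filtrations via the geometric Satake equivalence, which is precisely what you do. The paper also notes an alternative direct geometric proof via~\cite[Theorem~4.16]{bgmrr} and~\cite{jmw2}, essentially the route you mention as a possibility but do not pursue.
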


This result is a geometric version of a theorem on tensor products of modules with good filtrations (for reductive algebraic groups over fields of positive characteristic) first due to Mathieu~\cite{mathieu} in full generality. It can be deduced from this result using the geometric Satake equivalence; a direct geometric proof can also be obtained from~\cite[Theorem~4.16]{bgmrr}, see~\cite{jmw2} for some details.

%-------------------------------------------------------------
\subsection{Proofs in case \texorpdfstring{$A=\varnothing$}{A=0}}
%-------------------------------------------------------------

We can now come to the proofs of the special case $A=\varnothing$ of Theorems~\ref{thm:geometric-Steinberg} and~\ref{thm:geometric-Steinberg-ff}. The following result is a consequence of Lemma~\ref{lem:dimension} that will be required below.

\begin{lem}
\label{lem:vanishing-Steinberg-2}
Let $y \in W_\ext^\res$, let $\mu \in \bY_+$, and let $\eta \in -\bY_+$. We have
\[
\Hom_{\Db_{I_\unip}(\Gr,\bk)}(\DGr_y \star^{\cL^+G} \cI_!^\mu, \NGr_{yt_\eta}[1])=0,
\]
and moreover
\[
\Hom_{\Db_{I_\unip}(\Gr,\bk)}(\DGr_y \star^{\cL^+G} \cI_!^\mu, \NGr_{yt_\eta})=0
\]
if $\eta \neq w_\circ(\mu)$. In case $\eta = w_\circ(\mu)$, we have
\[
\Hom_{\Db_{I_\unip}(\Gr,\bk)}(\DGr_y \star^{\cL^+G} \cI_!^\mu, \NGr_{yt_{w_\circ(\mu)}}) \neq 0.
\]
\end{lem}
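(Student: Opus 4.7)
The plan is to reduce the Hom computations to a Borel--Moore homology question and apply Lemma~\ref{lem:dimension}. Introduce the auxiliary complex
\[
\cE := \DGr_y \star^{\cL^+G} \bigl( j^\mu_! \underline{\bk}_{\Gr^\mu}[\langle 2\rho, \mu\rangle] \bigr) \cong (m_{y,\mu})_! \underline{\bk}_{\Gr_y \wttimes \Gr^\mu}[\ell(y) + \langle 2\rho, \mu\rangle],
\]
the twisted external product of $!$-extensions. Since $\DGr_y \star^{\cL^+G}(-)$ is t-exact and $\cI_!^\mu = \pH^0(j^\mu_! \underline{\bk}_{\Gr^\mu}[\langle 2\rho, \mu\rangle])$, we have $\DGr_y \star^{\cL^+G} \cI_!^\mu \cong \pH^0(\cE)$; the truncation triangle $\cE^{<0} \to \cE \to \DGr_y \star^{\cL^+G} \cI_!^\mu \xrightarrow{+1}$, with $\cE^{<0} \in {}^{\mathrm{p}} D^{\leq -1}$, combined with the orthogonality $\Hom({}^{\mathrm{p}} D^{\leq -1}, {}^{\mathrm{p}} D^{\geq 0}) = 0$, yields an isomorphism $\Hom(\DGr_y \star^{\cL^+G} \cI_!^\mu, \NGr_{yt_\eta}) \cong \Hom(\cE, \NGr_{yt_\eta})$ and an injection $\Hom(\DGr_y \star^{\cL^+G} \cI_!^\mu, \NGr_{yt_\eta}[1]) \hookrightarrow \Hom(\cE, \NGr_{yt_\eta}[1])$. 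So it suffices to control $\Hom(\cE, \NGr_{yt_\eta}[i])$ for $i \in \{0, 1\}$.

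Set $U := m_{y,\mu}^{-1}(\Gr_{yt_\eta})$ and $\tilde m := m_{y,\mu}|_U$. Proper base change, adjunction, and Verdier duality on the smooth orbit $\Gr_{yt_\eta}$ give
\[
\Hom(\cE, \NGr_{yt_\eta}[i]) \cong H^{\mathrm{BM}}_{2\ell(yt_\eta) + \langle 2\rho, \mu + \eta\rangle - i}(U, \bk),
\]
after applying Lemma~\ref{lem:length-res-dom} (which gives $\ell(yt_\eta) = \ell(y) + \langle 2\rho, -\eta\rangle$) to simplify the shifts. By $I$-equivariance of $\tilde m$ onto a single $I$-orbit, $\dim U = \ell(yt_\eta) + \dim m_{y,\mu}^{-1}(\dot y L_\eta)$, which by Lemma~\ref{lem:dimension} is at most $\ell(yt_\eta) + \langle \rho, \mu + \eta\rangle$, with strict inequality when $\eta \neq w_\circ(\mu)$.

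If $\eta \neq w_\circ(\mu)$, the strict inequality combined with the integrality of $\langle \rho, \mu + \eta\rangle$ when the fiber is nonempty (a consequence of the sums-of-positive-coroots structure underlying the relevant intersections) improves to $\dim U \leq \ell(yt_\eta) + \langle \rho, \mu + \eta\rangle - 1$, whence the Borel--Moore degree above strictly exceeds $2\dim U$ for both $i = 0$ and $i = 1$, giving the two asserted vanishings. If $\eta = w_\circ(\mu)$, then $\langle \rho, \mu + \eta\rangle = 0$ and the fiber is zero-dimensional, containing the point $[\dot y, L_{w_\circ(\mu)}]$ (since $L_{w_\circ(\mu)} = \dot w_\circ \cdot L_\mu \in \Gr^\mu$). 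A direct analysis, using that $\Gr^\mu \cap \mathrm{S}_{w_\circ(\mu)} = \{L_{w_\circ(\mu)}\}$ (a standard fact from the theory of semi-infinite orbits) combined with the identifications from the proof of Lemma~\ref{lem:dimension}, shows the fiber is a single reduced point, so $\tilde m$ is an isomorphism and $U \cong \bA^{\ell(yt_{w_\circ(\mu)})}$. Since $H^{\mathrm{BM}}_k(\bA^n)$ equals $\bk$ for $k = 2n$ and vanishes otherwise, we obtain $\Hom(\cE, \NGr_{yt_{w_\circ(\mu)}}[1]) = 0$ and $\Hom(\cE, \NGr_{yt_{w_\circ(\mu)}}) \cong \bk$, yielding the final vanishing and nonvanishing via the first paragraph. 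The main obstacle is the ``single reduced point'' claim in the case $\eta = w_\circ(\mu)$, on which both the nonvanishing and the $i = 1$ vanishing in that case depend.
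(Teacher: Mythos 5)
Your approach is essentially the same as the paper's: pass from $\cI_!^\mu$ to the non-truncated $j^\mu_!\underline\bk_{\Gr^\mu}[\langle 2\rho,\mu\rangle]$, express the result in terms of the multiplication map $m_{y,\mu}$, and apply Lemma~\ref{lem:dimension}. The computation of $\Hom$ via Borel--Moore homology of $U$ rather than via compactly-supported cohomology of the fiber is equivalent bookkeeping. Your treatment of the $\eta\neq w_\circ(\mu)$ case (strict dimension bound plus integrality of $\langle\rho,\mu+\eta\rangle$ on the locus where the fiber is nonempty) agrees with the reasoning implicit in the paper.

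Where you diverge is in the case $\eta=w_\circ(\mu)$, and here you have created a difficulty that does not actually exist. You invoke $I$-equivariance only to relate $\dim U$ to $\dim m_{y,\mu}^{-1}(\dot y L_\eta)$, and then worry that you need the fiber to be a single reduced point to control $H^{\mathrm{BM}}_{2\ell(yt_\eta)-1}(U)$ (correctly noting that a zero-dimensional bound on $F:=m_{y,\mu}^{-1}(\dot y L_\eta)$ alone does not force $H^{\mathrm{BM}}_{2\dim U-1}(U)=0$). But you should push the equivariance one step further: since $\Gr_{yt_\eta}$ is a single $I$-orbit isomorphic to affine space, the cohomology sheaves of $(\tilde m)_!\underline\bk_U$ are constant on $\Gr_{yt_\eta}$, so the relevant $\Hom$ reduces to $\mathsf H^{\langle 2\rho,\mu+\eta\rangle - i}_c(F;\bk)^*$ (equivalently $H^{\mathrm{BM}}_{i-\langle 2\rho,\mu+\eta\rangle}(F)$). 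For $\eta=w_\circ(\mu)$ this is $\mathsf H^{-i}_c(F)^*$: the case $i=1$ vanishes automatically since $\mathsf H^{-1}_c$ of any scheme is zero, and the case $i=0$ is nonzero precisely because $F$ is nonempty and finite (by Lemma~\ref{lem:dimension}). No "single reduced point" claim is needed, and indeed the paper makes exactly this reduction. The statement you identify as the main obstacle is thus superfluous, and I would not try to prove it; whether it is even true in general would require a separate argument beyond the cited fact about $\Gr^\mu\cap\mathrm S_{w_\circ(\mu)}$.
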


\begin{proof}
Let $\cQ = ({}^{\mathrm{p}}\hspace{-1pt}\tau^{\le -1}(j^\mu_!\underline{\bk}_{\Gr^\mu}[\langle 2\rho,\mu \rangle]))[1]$, so that we have a distinguished triangle
\[
j^\mu_! \underline{\bk}_{\Gr^\mu}[\langle 2\rho,\mu \rangle] \to \cI^\mu_! \to \cQ \to.
\]
Note that $\cQ \in {}^{\mathrm{p}}\Db_{\cL^+G}(\Gr,\bk)^{\le -2}$.  By the t-exactness of $\star^{\cL^+G}$ and the fact that $\DGr_y$ and $\NGr_{yt_\eta}$ are perverse, we see that
\[
\Hom(\DGr_y \star^{\cL^+G} \cQ, \NGr_{yt_\eta}) = \Hom(\DGr_y \star^{\cL^+G} \cQ, \NGr_{yt_\eta}[1]) = 0.
\]
Thus, to prove the lemma it is enough to show that the space
\[
\Hom(\DGr_y \star^{\cL^+G} (j^\mu_! \underline{\bk}_{\Gr^\mu}[\langle 2\rho,\mu \rangle]), \NGr_{yt_\eta}[i])
\]
vanishes if $i = 1$ or if $i = 0$ and $\eta \ne w_\circ(\mu)$, and is nonzero
if $i=0$ and $\eta=w_\circ(\mu)$.

Using the notation introduced in~\S\ref{ss:proof-steinberg}, from the definition we see that
\[
\DGr_y \star^{\cL^+G} (j^\mu_! \underline{\bk}_{\Gr^\mu}[\langle 2\rho,\mu \rangle]) = (m_{y,\mu})_! \underline{\bk}[\ell(y)+\langle 2\rho, \mu \rangle];
\]
by the base change theorem we deduce that
\begin{multline*}
\Hom(\DGr_y \star^{\cL^+G} (j^\mu_! \underline{\bk}_{\Gr^\mu}[\langle 2\rho,\mu \rangle]), \NGr_{yt_\eta}[i]) \\
\cong \Hom \bigl( (m_{y,\mu}^{y t_\eta})_! \underline{\bk} [\ell(y)+\langle 2\rho, \mu \rangle],\underline{\bk}_{\Gr_{y t_\eta}} [\ell(yt_\eta)+i] \bigr),
\end{multline*}
where $m_{y,\mu}^{y t_\eta}$ is the restriction of $m_{y,\mu}$ to the preimage of $\Gr_{y t_\eta}$.
Now, $\Gr_{yt_\eta}$ is isomorphic to an affine space, and by equivariance the cohomology sheaves of $(m_{y,\mu}^{y t_\eta})_! \underline{\bk}$ are constant sheaves.  The $\Hom$-group above may therefore be computed after passing to stalks at $\dot{y} L_\eta \in \Gr_{yt_\eta}$. 
We deduce that
\[
\Hom(\DGr_y \star^{\cL^+G} (j^\mu_! \underline{\bk}_{\Gr^\mu}[\langle 2\rho,\mu \rangle]), \NGr_{yt_\eta}[i])
\cong \mathsf{H}_c^{\ell(y)+\langle 2\rho,\mu \rangle - \ell(yt_\eta) - i}(m_{y,\mu}^{-1}(\dot{y} L_\eta);\bk)^*.
\]
Here by Lemma~\ref{lem:length-res-dom} we have $\ell(yt_\eta)=\ell(y)-\langle 2\rho,\eta \rangle$, so that
\begin{equation}\label{eqn:fiber-cohom}
\Hom(\DGr_y \star^{\cL^+G}  (j^\mu_! \underline{\bk}_{\Gr^\mu}[\langle 2\rho,\mu \rangle]), \NGr_{yt_\eta}[i]) \cong \mathsf{H}_c^{\langle 2\rho,\mu+\eta \rangle-i}(m_{y,\mu}^{-1}(\dot{y} L_\eta);\bk)^*.
\end{equation}
By Lemma~\ref{lem:dimension}, if $\eta \neq w_\circ(\mu)$ we have $\dim(m_{y,\mu}^{-1}(\dot{y} L_\eta)) < \langle \rho, \mu+\eta \rangle$, so the right-hand side of~\eqref{eqn:fiber-cohom} vanishes for $i = 0$ and $i = 1$.  If $\eta = w_\circ(\mu)$, then we have
\[
\dim(m_{y,\mu}^{-1}(\dot{y} L_\eta)) \leq \langle \rho, \mu+\eta \rangle=0
\]
(again by Lemma~\ref{lem:dimension}) and $m_{y,\mu}^{-1}(\dot{y} L_\eta) \neq \varnothing$ (since $[\dot{y}: L_{\eta}] \in m_{y,\mu}^{-1}(\dot{y} L_\eta)$); the right-hand side of~\eqref{eqn:fiber-cohom} therefore still vanishes for $i = 1$, and is nonzero for $i = 0$.
\end{proof}

We are now ready to prove Theorem~\ref{thm:geometric-Steinberg-ff} in the special case $A=\varnothing$.

\begin{proof}[Proof of Theorem~\ref{thm:geometric-Steinberg-ff} when $A=\varnothing$]
The proof will consist of five steps.

\textit{Step 1. If $\cF,\cG \in \Perv_{\cL^+G}(\Gr,\bk)$ and if $\cF$ has a standard filtration and $\cG$ has a costandard filtration, then
\[
\Hom(\DGr_y \star^{\cL^+G} \cF, \NGr_y \star^{\cL^+G} \cG[1]) = 0.
\]}%
Of course we can assume that $\cG=\cI^\nu_*$ for some $\nu \in \bY_+$.
In the case where $\nu=0$, this claim follows from Lemma~\ref{lem:vanishing-Steinberg-2}.  The general case reduces to this case using the isomorphism
\[
\Hom(\DGr_y \star^{\cL^+G} \cF, \NGr_y \star^{\cL^+G} \cG[1])
\cong
\Hom(\DGr_y \star^{\cL^+G} \cF \star^{\cL^+G} \cG^\vee, \NGr_y [1]),
\]
since $\cF \star^{\cL^+G} \cG^\vee$ has a standard filtration by~\eqref{eqn:duals} and Theorem~\ref{prop:mathieu-thm}.

\textit{Step 2. Let $c: \DGr_y \to \NGr_y$ be the canonical map.  If $\cF \in \Perv_{\cL^+G}(\Gr,\bk)$ has a standard filtration and $\cG \in \Perv_{\cL^+G}(\Gr,\bk)$ has a costandard filtration, then the map
\[
\Hom(\cF, \cG) \to \Hom(\DGr_y \star^{\cL^+G} \cF, \NGr_y \star^{\cL^+G} \cG)
\qquad\text{given by}\qquad
\phi \mapsto c \star^{\cL^+G} \phi
\]
is an isomorphism.}
First we assume that $\cG = \cI_*^0$; in this case we will prove the claim by induction on the length of a standard filtration of $\cF$.  If $\cF = \cI^\mu_!$ for some $\mu \in \bY_+$, then both sides vanish if $\mu \ne 0$ by Lemma~\ref{lem:vanishing-Steinberg-2}, and the map is clearly an isomorphism if $\mu=0$.
Now suppose $\cF$ has a standard filtration of length${}> 1$, and choose some short exact sequence $0 \to \cF' \to \cF \to \cI^\mu_! \to 0$ where $\cF'$ has a standard filtration and $\mu \in \bY_+$.  We have a commutative diagram
\[
\hbox{\small$
\begin{tikzcd}[column sep=tiny]
0 \ar[r] &
\Hom(\cI^\mu_!, \cI_*^0) \ar[r] \ar[d, "\wr"] &
\Hom(\cF, \cI_*^0) \ar[r] \ar[d] &
\Hom(\cF', \cI_*^0) \ar[r] \ar[d, "\wr"] &
\Hom(\cI^\mu_!,\cI_*^0[1]) \ar[d, "\wr"] \\
0 \ar[r] &
\Hom(\DGr_y {\star} \cI^\mu_!, \NGr_y) \ar[r] &
\Hom(\DGr_y {\star} \cF, \NGr_y) \ar[r] &
\Hom(\DGr_y {\star} \cF', \NGr_y) \ar[r] &
\Hom(\DGr_y {\star} \cI^\mu_!,\NGr_y[1]),
\end{tikzcd}$}
\]
where we write $\star$ for $\star^{\cL^+G}$ and all vertical arrows are as in the claim.
The first and third vertical arrows are isomorphisms by induction, and the fourth vertical arrow is an isomorphism because both terms vanish (by Step 1).  By the five lemma, the second vertical arrow is also an isomorphism, finishing the proof in this case. Once this case is established, we deduce the general case by adjunction, as in Step 1.

\textit{Step 3. If $\cF,\cG \in \Perv_{\cL^+G}(\Gr,\bk)$ and if $\cF$ has a standard filtration and $\cG$ has a costandard filtration, then the map
\[
\Hom(\cF, \cG) \to \Hom(\LGr_y \star^{\cL^+G} \cF, \LGr_y \star^{\cL^+G} \cG)
\]
is an isomorphism.}  This follows from the observation that the map from Step 2 is the composition of the map above with the natural map
\[
\Hom(\LGr_y \star^{\cL^+G} \cF, \LGr_y \star^{\cL^+G} \cG) \to
\Hom(\DGr_y \star^{\cL^+G} \cF, \NGr_y \star^{\cL^+G} \cG),
\]
which is injective by t-exactness of $\star^{\cL^+ G}$.

\textit{Step 4.  If $\cF \in \Perv_{\cL^+G}(\Gr,\bk)$ has a standard filtration, and $\cG \in \Perv_{\cL^+G}(\Gr,\bk)$ is arbitrary, then the map
\[
\Hom(\cF, \cG) \to \Hom(\LGr_y \star^{\cL^+G} \cF, \LGr_y \star^{\cL^+G} \cG)
\]
is an isomorphism.}  Let $Z \subset \Gr$ be the support of $\cG$; this is a closed union of finitely many $\cL^+G$-orbits. By results of~\cite{mv} (see~\cite[\S 1.12.1]{bar}), the category $\Perv_{\cL^+G}(Z,\bk)$ admits a projective generator which admits a standard filtration. By duality, it therefore also admits an injective generator which admits a costandard filtration; in particular there exists
a copresentation
\[
0 \to \cG \to \cI \to \cI'
\]
where $\cI,\cI' \in \Perv_{\cL^+G}(\Gr,\bk)$ have costandard filtrations.  We then have a commutative diagram
\[
\begin{tikzcd}[column sep=small, row sep=small]
0 \ar[r] &
\Hom(\cF, \cG) \ar[r] \ar[d] &
\Hom(\cF, \cI) \ar[r] \ar[d, "\wr"] &
\Hom(\cF, \cI') \ar[d, "\wr"] \\
0 \ar[r] &
\Hom(\LGr_y \star \cF, \LGr_y \star \cG) \ar[r] &
\Hom(\LGr_y \star \cF, \LGr_y \star \cI) \ar[r] &
\Hom(\LGr_y \star \cF, \LGr_y \star \cI'),
\end{tikzcd}
\]
where we again write $\star$ for $\star^{\cL^+G}$.
The last two vertical maps are isomorphisms by Step~3, so by the five lemma the first is as well.

\textit{Step 5. Proof of full faithfulness of $\LGr_y \star^{\cL^+G} ({-})$ in general.}  This is very similar to Step~4, using a presentation of $\cF$ by perverse sheaves with standard filtrations.
\end{proof}

Using the special case of Theorem~\ref{thm:geometric-Steinberg-ff} proved above, we can now deduce the corresponding special case of Theorem~\ref{thm:geometric-Steinberg}.

\begin{proof}[Proof of Theorem~\ref{thm:geometric-Steinberg} when $A=\varnothing$]
First we claim that $\LGr_y \star^{\cL^+G} \IC^\mu$ is simple. Indeed, otherwise there exists a surjective and noninjective morphism $\LGr_y \star^{\cL^+G} \IC^\mu \twoheadrightarrow \cF$ for some simple object $\cF$ in $\Perv_{I_\unip}(\Gr,\bk)$. Now convolution commutes with Verdier duality, so $\LGr_y \star^{\cL^+G} \IC^\mu$ is self-dual.  Since the simple object $\cF$ is also self-dual, we can apply Verdier duality to obtain an injective and nonsurjective morphism $\cF \hookrightarrow \LGr_y \star^{\cL^+G} \IC^\mu$. Composing these two maps, we obtain a nonzero endomorphism of $\LGr_y \star^{\cL^+G} \IC^\mu$ which is not a multiple of the identity, proving that
\[
\dim \Hom_{\Db_{I_\unip}(\Gr,\bk)}(\LGr_y \star^{\cL^+G} \IC^\mu,\LGr_y \star^{\cL^+G} \IC^\mu) \geq 2,
\]
and therefore contradicting (the known special case of) Theorem~\ref{thm:geometric-Steinberg-ff}.

On the other hand, we claim that the perverse sheaf $\LGr_y \star^{\cL^+G} \IC^\mu$ admits $\LGr_{yt_{w_\circ(\mu)}}$ as a composition factor.  (This claim will complete the proof.) Indeed, we have a surjection
\begin{equation}
\label{eqn:surjection-D-L-conv}
\DGr_y \star^{\cL^+G} \IC^\mu \twoheadrightarrow \LGr_y \star^{\cL^+G} \IC^\mu.
\end{equation}
Here 
since $\NGr_{yt_{w_\circ(\mu)}}$ has $\LGr_{yt_{w_\circ(\mu)}}$ as socle,
the fact that $\Hom(\DGr_y \star^{\cL^+G} \IC^\mu,\NGr_{yt_{w_\circ(\mu)}}) \neq 0$ (see Lemma~\ref{lem:vanishing-Steinberg-2})
implies that $\DGr_y \star^{\cL^+G} \IC^\mu$ admits $\LGr_{yt_{w_\circ(\mu)}}$ as a composition factor. For dimension reasons the support of the kernel of~\eqref{eqn:surjection-D-L-conv} does not intersect $\Gr_{yt_{w_\circ(\mu)}}$, so this kernel does not admit $\LGr_{yt_{w_\circ(\mu)}}$ as a composition factor, which implies the desired claim.
\end{proof}

\begin{rmk}
\label{rmk:vanishing-Steinberg-2}
The reasoning at the end of the preceding proof can be used to make Lemma~\ref{lem:vanishing-Steinberg-2} a little bit more precise: in the notation of this statement we have
\[
\dim \left( \Hom_{\Db_{I_\unip}(\Gr,\bk)}(\DGr_y \star^{\cL^+G} \cI_!^\mu, \NGr_{yt_{w_\circ(\mu)}}) \right) = 1.
\]
Indeed, this follows from the observation that the support of the kernel of~\eqref{eqn:surjection-D-L-conv}
does not meet $\Gr_{yt_{w_\circ(\mu)}}$.
\end{rmk}

%------------------------------------------------------------------------
\subsection{Proofs for general \texorpdfstring{$A$}{A}}
\label{ss:Steinberg-Whittaker}
%------------------------------------------------------------------------

We now prove Theorems~\ref{thm:geometric-Steinberg} and~\ref{thm:geometric-Steinberg-ff} for a general finitary subset $A \subset S_\aff$.

\begin{proof}[Proof of Theorem~\ref{thm:geometric-Steinberg}]
Recall the functor $\Av^A_\psi$ studied in~\S\ref{ss:Av-Whit}. It is clear that this functor commutes with convolution on the right by $\cL^+G$-equivariant objects.  Applying $\Av^A_\psi$ to the isomorphism $\LGr_y \star^{\cL^+G} \IC^\mu \cong \LGr_{yt_{w_\circ(\mu)}}$ from the case $A=\varnothing$ and using Lemma~\ref{lem:AvWhit}\eqref{it:AvWhit-4}, we deduce the desired isomorphism.
\end{proof}

\begin{proof}[Proof of Theorem~\ref{thm:geometric-Steinberg-ff}]
Since the case $A=\varnothing$ is now known, it is enough to show that for $\cF, \cG \in \Perv_{\cL^+G}(\Gr,\bk)$ the map
\begin{equation}\label{eqn:Whit-Stein-ff}
\Hom(\LGr_y \star^{\cL^+G} \cF, \LGr_y \star^{\cL^+G} \cG) \to 
\Hom(\LGr^A_y \star^{\cL^+G} \cF, \LGr^A_y \star^{\cL^+G} \cG)
\end{equation}
induced by $\Av^A_\psi$ is an isomorphism.  
Since the functor $(-) \star^{\cL^+ G} \cF$ is left adjoint to $(-) \star^{\cL^+ G} \cF^\vee$,
we may (and will) assume without loss of generality that $\cF = \IC^0$.  

First, we claim that the map
\begin{equation}
\label{eqn:Whit-Stein-dl}
\Hom(\DGr_y, \LGr_y \star^{\cL^+G} \cG) \to 
\Hom(\DGr^A_y, \LGr^A_y \star^{\cL^+G} \cG)
\end{equation}
induced by $\Av^A_\psi$
is an isomorphism. Indeed, by adjunction we have
\begin{multline*}
\Hom(\DGr^A_y, \LGr^A_y \star^{\cL^+G} \cG) 
\cong \Hom(\Av^A_\psi(\DGr_y), \Av^A_\psi(\LGr_y \star^{\cL^+G} \cG) )
\\
\cong \Hom(\Av^A_!(\Av^A_\psi(\DGr_y)), \LGr_y \star^{\cL^+G} \cG).
\end{multline*}
Thus,~\eqref{eqn:Whit-Stein-dl} can be identified with the map
\begin{equation}\label{eqn:Whit-Stein-dl2}
\Hom(\DGr_y, \LGr_y \star^{\cL^+G} \cG) \to 
\Hom(\Av^A_!(\Av^A_\psi(\DGr_y)), \LGr_y \star^{\cL^+G} \cG)
\end{equation}
induced by the adjunction morphism $f : \Av^A_!(\Av^A_\psi(\DGr_y)) \to \DGr_y$.  By Lemma~\ref{lem:Av-Iw}\eqref{it:Av-Iw-2} we have $\Av^A_!(\Av^A_\psi(\DGr_y)) \cong \Av^A_!(\DFl^A_e) \star^I \DGr_y$, and our map is induced by a surjection $\Av^A_!(\DFl^A_e) \to \LFl_e$. It follows from~\cite[Lemma~10.1]{br} that the kernel of this surjection admits a filtration with subquotients of the form $\DFl_v$ for $v \in W_A \smallsetminus \{e\}$, each appearing once. Since $y$ is minimal in $W_A y$, we have $\ell(vy)=\ell(v)+\ell(y)$ for any such $v$, from which one deduces that $\DFl_v \star^I \DGr_y \cong \DGr_{vy}$ by standard arguments; we deduce that $f$ is surjective, and that its kernel $\mathcal{K}$ has a standard filtration with subquotients of the form $\DGr_{vy}$ with $v \in W_A \smallsetminus \{ e\}$.  Thus~\eqref{eqn:Whit-Stein-dl2} is injective, and to prove that it is surjective it suffices to show that $\Hom(\mathcal{K}, \LGr_y \star^{\cL^+G} \cG) = 0$, which will follow if we prove that
\[
\Hom(\DGr_{vy}, \LGr_y \star^{\cL^+G} \cG) = 0
\]
when $v \in W_A \smallsetminus \{e\}$.  This holds because the unique simple quotient of $\DGr_{vy}$, namely $\LGr_{vy}$, does not occur as a composition factor of $\LGr_y \star^{\cL^+G} \cG$, since these composition factors are of the form $\LGr_{yt_{w_\circ}(\nu)}$ for some $\nu \in \bY_+$ by Theorem~\ref{thm:geometric-Steinberg}, and thus in particular have their label in ${}^A W^S_\ext$. (Note that $vy$ does not belong to ${}^A W^S_\ext$ since it is not minimal in the coset $W_A vy=W_A y$.)

Next, we consider the commutative diagram
\[
\begin{tikzcd}[row sep=small]
\Hom(\LGr_y, \LGr_y \star^{\cL^+G} \cG)  \ar[r] \ar[d, "\text{\eqref{eqn:Whit-Stein-ff}}"'] &
  \Hom(\DGr_y, \LGr_y \star^{\cL^+G} \cG)  \ar[d, "\text{\eqref{eqn:Whit-Stein-dl}}"] \\
\Hom(\LGr^A_y, \LGr^A_y \star^{\cL^+G} \cG) \ar[r] &
  \Hom(\DGr^A_y, \LGr^A_y \star^{\cL^+G} \cG).
\end{tikzcd}
\]
Here the right-hand vertical arrow is an isomorphism as proved above, and both horizontal maps are injective, because they are induced by the surjective morphisms $\DGr_y \to \LGr_y$ and $\DGr^A_y \to \LGr^A_y$ respectively. The upper arrow is in fact even an isomorphism, since the kernel of the surjection $\DGr_y \to \LGr_y$ has its composition factors of the form $\LGr_z$ with $\ell(z) < \ell(y)$, while all composition factors of $\LGr_y \star^{\cL^+G} \cG$ are of the form $\LGr_{yt_\nu}$ with $\nu \in -\bY_+$ by Theorem~\ref{thm:geometric-Steinberg}, and hence have their label of length at least $\ell(y)$ by Lemma~\ref{lem:length-res-dom}. 
We deduce that all four maps above are isomorphisms, which finishes the proof.
\end{proof}

%----------------------------------------------------------------
\subsection{A conjecture on the image of \texorpdfstring{$\Phi_{y,A}$}{PhiyA}}
\label{ss:conj}
%----------------------------------------------------------------

A special case of the functor in Theorem~\ref{thm:geometric-Steinberg-ff} has already appeared in the literature: it is the case when $A=S$ (so that $W_A=W$) and $y=t_\varsigma w_\circ$. For these choices,  it is shown in~\cite{bgmrr} that this functor is in fact an equivalence of categories. For general $A$ and $y$, this functor cannot be an equivalence, simply because not every simple object of $\Perv_{(I_\unip^A, \cX_A)}(\Gr,\bk)$ belongs to its essential image. But one might still expect that in some cases it satisfies a property stronger than full faithfulness.
Namely, denote by $\sC_{y,A} \subset \Perv_{(I_\unip^A,\cX_A)}(\Gr,\bk)$ the Serre subcategory generated by the simple objects of the form $\LGr^A_{yt_\lambda}$ with $\lambda \in -\bY_+$. Then by exactness and Theorem~\ref{thm:geometric-Steinberg}, the functor $\Phi_{y,A}$ factors through a (fully faithful and exact) functor
$\Perv_{\cL^+G}(\Gr,\bk) \to \sC_{y,A}$,
which will still be denoted $\Phi_{y,A}$.

\begin{conj}
\label{conj}
Assume that $y \in {}^A W_\ext^\res$ is minimal in ${}^A W_\ext^S$ for the Bruhat order. Then the functor
\[
\Phi_{y,A} : \Perv_{\cL^+G}(\Gr,\bk) \to \sC_{y,A}
\]
is an equivalence of categories.
\end{conj}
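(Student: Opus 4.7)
The plan is to bootstrap the full faithfulness of $\Phi_{y,A}$ established in Theorem~\ref{thm:geometric-Steinberg-ff} into an equivalence by upgrading the $\Hom$-level comparison to an $\Ext^{1}$-level comparison, from which essential surjectivity follows by induction on length in $\sC_{y,A}$. The starting point is clean: by Theorems~\ref{thm:geometric-Steinberg-ff} and~\ref{thm:geometric-Steinberg}, $\Phi_{y,A}$ is an exact, fully faithful functor that sends $\IC^{\mu}$ to $\LGr^{A}_{yt_{w_{\circ}(\mu)}}$, inducing a bijection between the simple objects of $\Perv_{\cL^{+}G}(\Gr,\bk)$ and those of $\sC_{y,A}$. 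A crucial simplification afforded by the minimality hypothesis on $y$ is that $\LGr^{A}_{y} = \DGr^{A}_{y} = \NGr^{A}_{y}$: any proper composition factor of $\NGr^{A}_{y}$ would be of the form $\LGr^{A}_{z}$ with $z \in {}^{A} W^{S}_{\ext}$ strictly below $y$ in the Bruhat order, and no such $z$ exists. Hence $\Phi_{y,A}$ is simultaneously convolution with the standard and with the costandard perverse sheaf attached to $y$.

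The core technical step is to show that for all $\cF,\cG \in \Perv_{\cL^{+}G}(\Gr,\bk)$ the map
\[
\Ext^{1}_{\Perv_{\cL^{+}G}(\Gr,\bk)}(\cF,\cG) \to \Ext^{1}_{\Perv_{(I^{A}_{\unip},\cX_{A})}(\Gr,\bk)}(\Phi_{y,A}(\cF),\Phi_{y,A}(\cG))
\]
induced by $\Phi_{y,A}$ is an isomorphism. Using $\DGr^{A}_{y} = \NGr^{A}_{y}$, this would follow from a vanishing of the form $\Hom(\DGr^{A}_{y} \star^{\cL^{+}G} \cF, \NGr^{A}_{y} \star^{\cL^{+}G} \cG[1]) = 0$ whenever $\cF$ admits a standard filtration and $\cG$ admits a costandard filtration, combined with d\'evissage reductions following the template of Steps~1--5 of the proof of Theorem~\ref{thm:geometric-Steinberg-ff}. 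To establish this vanishing in the Whittaker setting, I would mimic Lemma~\ref{lem:vanishing-Steinberg-2}: first reduce to $\cG = \cI^{0}_{*}$ by rigidity of the Satake category (Proposition~\ref{prop:mathieu-thm}), then to $\cF = \cI^{\mu}_{!}$, and finally, via the defining truncation triangle $j^{\mu}_{!}\underline{\bk}_{\Gr^{\mu}}[\langle 2\rho,\mu\rangle] \to \cI^{\mu}_{!} \to \cQ \to$ and base change, express the relevant $\Hom$-group as compactly-supported cohomology of fibers of a twisted multiplication map, bounded above by a Whittaker analogue of Lemma~\ref{lem:dimension}.

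Given such an $\Ext^{1}$-isomorphism, essential surjectivity follows by induction on the length of objects in $\sC_{y,A}$: the simples lie in the image of $\Phi_{y,A}$ by Theorem~\ref{thm:geometric-Steinberg}, and any short exact sequence $0 \to \cF' \to \cF \to \cF'' \to 0$ in $\sC_{y,A}$ whose outer terms lie in the image lifts to one in $\Perv_{\cL^{+}G}(\Gr,\bk)$, forcing $\cF$ to also lie in the image. The main obstacle is the $\Ext^{1}$-vanishing step itself: the averaging functor $\Av^{A}_{\psi}$ intertwines $\Hom$-groups cleanly, but its interaction with higher $\Ext$-groups is more delicate, and in the Whittaker setting one cannot simply transport the $A = \varnothing$ case of the vanishing. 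A refined geometric argument, most likely an enhancement of the dimension estimates of Section~\ref{ss:proof-steinberg} that uses the minimality of $y$ structurally, is probably needed, which is plausibly why the result is only conjectured here.
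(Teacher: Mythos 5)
This statement is a conjecture, and the paper does not prove it; what follows it in the text is a discussion of essentially the strategy you sketch, together with a precise explanation of where it breaks down. Your reductions do match the paper's: minimality of $y$ in ${}^A W^S_\ext$ forces $\DGr^A_y = \LGr^A_y = \NGr^A_y$, so $\Phi_{y,A}$ is simultaneously convolution with a standard and with a costandard object; essential surjectivity then reduces, via the bijection on simples from Theorem~\ref{thm:geometric-Steinberg} and an induction on length, to surjectivity of the $\Ext^1$-comparison map; and by rigidity of the Satake category, Proposition~\ref{prop:mathieu-thm}, and standard d\'evissage this boils down to the single vanishing
\[
\Ext^1_{\Perv_{(I_\unip^A,\cX_A)}(\Gr,\bk)}\bigl(\DGr^A_y \star^{\cL^+G} \cI_!^\mu,\ \NGr^A_y\bigr) = 0 \qquad\text{for all } \mu\in\bY_+,
\]
which the paper records explicitly as an equivalent reformulation of the conjecture.

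Where your diagnosis goes astray is in attributing the obstruction to the behaviour of $\Av^A_\psi$ on higher $\Ext$-groups. The paper does not try to transport the vanishing through averaging; it attacks the $\Ext^1$-group directly as in Lemma~\ref{lem:vanishing-Steinberg-2}, by base change to the fiber $(m^A_{y,\mu})^{-1}(\dot{y}L_0)$. After left translation by $\dot{w}_A^{-1}$ this becomes the fiber $m_{w_A y,\mu}^{-1}(\dot{w}_A^{-1}\dot{y}L_0)$ of exactly the form treated in Lemma~\ref{lem:dimension}, but over the element $w_A y$ rather than $y$, and with coefficients in a nontrivial rank-one Whittaker local system rather than the constant sheaf. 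The ``Whittaker analogue of Lemma~\ref{lem:dimension}'' you appeal to is thus not merely unproven but actually fails as a dimension bound: the strict inequality requires $w_A y$ to be restricted, and minimality of $y$ in ${}^A W^S_\ext$ does not imply this. The paper exhibits a concrete counterexample --- for $G=\mathrm{GL}_2$, $A=S$, $y=t_\varsigma s$ (which has length $0$ and is minimal in ${}^S W^S_\ext$), and $\mu=(0,-2)$, one computes $\dim\bigl((m^S_{y,\mu})^{-1}(\dot{y}L_0)\bigr)=\langle\rho,\mu\rangle$ exactly. So the dimension count alone cannot close the gap; a proof would have to exploit the nontriviality of the Artin--Schreier coefficients on the threshold-dimensional components, or find a mechanism unrelated to these estimates, which is precisely why the statement is left as a conjecture.
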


We see Conjecture~\ref{conj} as giving ``partially Whittaker models" for the Satake category, in the spirit of~\cite{bgmrr}. As an evidence for this conjecture, let us note that it holds at least in the following cases:
\begin{enumerate}
\item
\label{it:conj-empty}
when $A=\varnothing$ and $y=e$;
\item
\label{it:conj-S}
when $A=S$ and $y=t_\varsigma w_\circ$;
\item
\label{it:char-0}
when $\bk$ has characteristic $0$.
\end{enumerate}
In fact, in case~\eqref{it:conj-empty} this claim is equivalent to the standard result---due to Mirkovi{\'c}--Vilonen~\cite{mv}---that the forgetful functor from $\Perv_{\cL^+G}(\Gr,\bk)$ to the category of perverse sheaves constructible with respect to the stratification by $\cL^+G$-orbits is an equivalence, see~\cite[Proposition~2.1]{mv}. In case~\eqref{it:conj-S}, the functor $\Phi_{t_\varsigma w_\circ,S}$ is the main object of study of~\cite{bgmrr}; in this setting we have $\sC_{t_\varsigma w_\circ,S}=\Perv_{(I_\unip^S,\cX_S)}(\Gr,\bk)$ by Lemma~\ref{lem:min-LR}, $t_\varsigma w_\circ$ is minimal for the Bruhat order because it has minimal length in ${}^S W_\ext^S$ (by the same statement and Lemma~\ref{lem:length-res-dom}), and the main result of~\cite{bgmrr} states that this functor is an equivalence of categories. (Note that revisiting the arguments in~\cite[\S 4.3]{bgmrr} involving parity complexes, one can prove directly that $\Phi_{t_\varsigma w_\circ,S}$ is essentially surjective once we know that it is fully faithful.) Finally, in case~\eqref{it:char-0}, parity considerations imply that the category $\sC_{y,A}$ is semisimple, which of course implies the statement.

%----------------------------------------------------------------
\subsection{An example}
%----------------------------------------------------------------

%We note finally that o
One can check that, given $y \in {}^A W_\ext^\res$ minimal in ${}^A W_\ext^S$ for the Bruhat order, Conjecture~\ref{conj} is equivalent to the statement that
\begin{equation}
\label{eqn:assumption-Whit-Serre}
\Ext^1_{\Perv_{(I_\unip^A,\cX_A)}(\Gr,\bk)}(\DGr^A_y \star^{\cL^+G} \cI_!^\mu, \NGr^A_y) = 0 \quad \text{for any $\mu \in \bY_+$.}
\end{equation}
Let us denote by
\[
m^A_{y,\mu} : \Gr^A_y \wttimes \Gr^\mu \to \Gr
\]
the morphism induced by $m$. As in the proof of Lemma~\ref{lem:vanishing-Steinberg-2}, we have an embedding
\begin{equation}
\label{eqn:conj-embedding}
\Ext^1_{\Perv_{(I_\unip^A,\cX_A)}(\Gr,\bk)}(\DGr^A_y \star^{\cL^+G} \cI_!^\mu, \NGr^A_y) \hookrightarrow \mathsf{H}_c^{\langle 2\rho,\mu \rangle-1} \bigl( (m^A_{y,\mu})^{-1}(\dot{y} L_0);\mathcal{F} \bigr)^*
\end{equation}
where $\mathcal{F}$ is the restriction to $(m^A_{y,\mu})^{-1}(\dot{y} L_0)$ of the pullback of the rank-$1$ $(I_\unip^A,\cX_A)$-equivariant local system on $\Gr^A_y$.
Since
$\Gr^A_y = \dot{w}_A \cdot \Gr_{w_A y}$,
left multiplication by $\dot{w}_A^{-1}$ induces an isomorphism
\[
(m^A_{y,\mu})^{-1}(\dot{y} L_0) \simto m_{w_Ay, \mu}^{-1}(\dot{w}_A^{-1} \dot{y} L_0).
\]
The right-hand side is of the form studied in Lemma~\ref{lem:dimension}; if $w_A y$ is restricted then
\[
\dim((m^A_{y,\mu})^{-1}(\dot{y} L_0)) < \langle \rho,\mu \rangle
\]
provided $\mu \neq 0$,
which allows one to deduce~\eqref{eqn:assumption-Whit-Serre} in this case. Unfortunately, this condition is not always satisfied, and it can happen that $\dim((m^A_{y,\mu})^{-1}(\dot{y} L_0)) = \langle \rho,\mu \rangle$. 

Indeed, consider the case $G=\mathrm{GL}_2(\F)$, with the standard choice of maximal torus and (negative) Borel subgroup, and $A=S$. Here we have a canonical identification $\bY=\Z^2$, such that $\bY_+ = \{(a,b) \in \Z^2 \mid a \geq b\}$, and we can take $\varsigma=(1,0)$. If $s$ is the unique element in $S$, then $y=t_\varsigma s$ is restricted and minimal in ${}^S W_\ext^S$ (in fact $\ell(y)=0$), but $sy=t_{(0,1)}$ is not restricted. One can check that in this case we have
\[
(m^S_{t_{(1,0)} s,(1,-1)})^{-1}(L_{(1,0)}) = \left\{ \bigl[ (\begin{smallmatrix} z & x \\ 0 & 1 \end{smallmatrix}) : (\begin{smallmatrix} 1 & -xz^{-1} \\ 0 & 1 \end{smallmatrix}) G(\F[ \hspace{-1pt} [z] \hspace{-1pt} ]) \bigr] : x \in \F^\times \right\};
\]
in particular, this scheme has dimension $1=\langle \rho, (1,-1) \rangle$. Here $\mathcal{F}$ is the restriction of the Artin--Schreier local system, so that the right-hand side in~\eqref{eqn:conj-embedding} has dimension $1$. 

This example illustrates why our proof of Theorem~\ref{thm:geometric-Steinberg-ff} has to be different in case $A \neq \varnothing$. (Note that in any case Conjecture~\ref{conj} is known in the special case considered here, as explained in~\S\ref{ss:conj}.)

%%%%%%%%%%%%%%%%%%
%%%%%%%%%%%%%%%%%%

\end{document}